\documentclass[reqno, 12pt]{article}

\pdfoutput=1

\usepackage{enumerate}
\usepackage{latexsym}
\usepackage[centertags]{amsmath}
\usepackage{amsfonts}
\usepackage{amssymb}
\usepackage{amsthm}
\usepackage{mathtools}
\usepackage{newlfont}
\usepackage{graphics}
\usepackage{color}
\usepackage{float}
\usepackage{diagbox}
\usepackage{tocloft}
\usepackage{titlesec}
\usepackage{booktabs}
\usepackage{extpfeil}
\usepackage{centernot}
\usepackage[pagebackref]{hyperref}
\usepackage[linesnumbered,ruled,vlined]{algorithm2e}
\usepackage{url}
\usepackage[T1]{fontenc}
\usepackage{lmodern}
\usepackage{hyperref}
\usepackage{microtype}
\usepackage[nameinlink,noabbrev,capitalize]{cleveref}
\usepackage{longtable}
\usepackage{rotating}
\usepackage{multirow}
\usepackage{extarrows}
\usepackage[sort,compress,numbers]{natbib}
\usepackage[utf8]{inputenc}
\numberwithin{equation}{section}

\newtheorem{theorem}{Theorem}[section]
\newtheorem{proposition}[theorem]{Proposition}
\newtheorem{lemma}[theorem]{Lemma}
\newtheorem{corollary}[theorem]{Corollary}
\theoremstyle{definition}
\newtheorem{definition}[theorem]{Definition}

\newtheorem{example}[theorem]{Example}

\allowdisplaybreaks[4]

\SetKwInput{KwInput}{Input}                
\SetKwInput{KwOutput}{Output}              

\newcommand{\Ctrans}{\mathcal{C}}
\newcommand{\Lfunc}{\mathcal{L}}
\newcommand{\Afunc}{\mathcal{A}}

\newcommand{\trans}{\mathsf{T}}

\newcommand{\Ring}{R}

\title{Proof of Barry's Four Hankel Determinant Conjectures}

\author{Feihu Liu$^{\color{blue} \dag}$, Ying Wang$^{\color{blue} \ddag}$, and Zihao Zhang$^{\color{blue} \S}$
\\[2mm]
{\small $^{\color{blue} \dag}$ Center for Combinatorics, LPMC}\\[-0.8ex]
{\small Nankai University, Tianjin 300071, P.R.~China}\\
{\small $^{\color{blue} \ddag}$ School of Mathematics and Statistics,}\\[-0.8ex]
{\small North China University of Water Resources and Electric Power, Zhengzhou, 450045, P.R.~China}\\
{\small $^{\color{blue} \S}$ School of Mathematics and Statistics}\\[-0.8ex]
{\small Beijing Institute of Technology, Beijing 102400, P.R.~China}\\
{\small {\color{blue} $^\dag$} Email address: liufeihu7476@163.com}\\
{\small {\color{blue} $^\ddag$} Email address: wangying2019@ncwu.edu.cn}\\
{\small {\color{blue} $^\S$} Email address: zihao-zhang@foxmail.com}\\
}

\date{\today}

\begin{document}

\maketitle

\begin{abstract}
Barry introduced a central transform of integer sequences and proposed four conjectures concerning the Hankel transforms of central transform of four rational families. We prove these four conjectures. The proofs are unified within a common algebraic framework: we interpret the Hankel determinants as Gram determinants and use a basis of shifted monic Chebyshev polynomials to reveal the finite-band structure of the associated Gram matrices.
\end{abstract}

\noindent
\begin{small}
\emph{2020 Mathematics subject classification}: Primary 05A15;  Secondary 15A15, 15B05.
\end{small}

\noindent
\begin{small}
\emph{Keywords}: Hankel determinant; Central transform; Riordan array; Chebyshev polynomial; Gram matrix.
\end{small}


\section{Introduction}\label{sec:introduction}

Throughout this section, \(R\) denotes a commutative \(\mathbb{Q}\)-algebra. All power-series manipulations are formal.

We introduce a powerful definition in enumerative combinatorics. It was introduced in~\cite{Shapiro1991}.
\begin{definition}\label{def:riordan-array}
Let \(d(x),f(x)\in\Ring[[x]]\), \(f(0)=0\), and \(f'(0)\) is a unit.  The \emph{Riordan array} \((d(x),f(x))\) is the infinite lower-triangular matrix whose \((n,k)\)-entry is
\[r_{n,k}=[x^n]d(x)f(x)^k,\qquad n,k\ge0.\]
Furthermore, if \(d(0)\) is a unit, then \((d(x),f(x))\) is called a \emph{proper Riordan array}.
\end{definition}

It is clear that a proper Riordan array is invertible.
The elements $f(x)$ is compositional invertible, with inverse $\overline{f}(x)$, where $f(\overline{f}(x))=x$ and $\overline{f}(f(x))=x$.
Let $\mathcal{R}$ be the set of all proper Riordan arrays. Then $\mathcal{R}$ forms a group, which we refer to as the \emph{Riordan group} \cite[Theorem 3.2]{LW. SSBCHMW}.
Its multiplication law is
\begin{align*}
(d(x),f(x))(e(x),h(x))=\bigl(d(x)e(f(x)),\,h(f(x))\bigr).
\end{align*}
The inverse is given by 
$$(d(x),f(x))^{-1}=\left(\frac{1}{d(\overline{f}(x))},\overline{f}(x) \right).$$
For more results on the Riordan group and applications, see \cite{LW. SSBCHMW,Cheon2017,Luzon2010,LuzonMoron2008}.

For a series \(g(x)\) with \(g(0)=1\), the \emph{Appell Riordan array}
\((g(x),x)\) has inverse $(g(x),x)^{-1}=\left(\frac1{g(x)},x\right)$.
The \emph{binomial Riordan array} is $\mathcal{P}=\left(\frac1{1-x},\frac{x}{1-x}\right)$.
Hence
\begin{equation}\label{eq:constructed-riordan}
\mathcal{P}\left(\frac{1}{g(x)},x\right)=\left(\frac{1}{1-x}\frac{1}{g\!\left(\frac{x}{1-x}\right)},\frac{x}{1-x}\right).
\end{equation}

Barry's \emph{central transform} is obtained by taking the inverse of an Appell Riordan array, multiplying by the binomial Riordan array, and extracting the entries in positions \((2n,n)\).
\begin{definition}[Central transform]\label{def:central-transform}
Let \(g(x)\in\Ring[[x]]\) satisfy \(g(0)=1\).  Its central transform is
\begin{align*}
\Ctrans(g)(z)=\sum_{n\ge0}\left[x^{2n}\right]\frac1{(1-x)g\!\left(\frac{x}{1-x}\right)}\left(\frac{x}{1-x}\right)^n z^n.
\end{align*}
Thus the coefficient of \(z^n\) is the \((2n,n)\)-entry of the
Riordan array in \cref{eq:constructed-riordan}.
\end{definition}

Let
\begin{align*}
c(z)=\frac{1-\sqrt{1-4z}}{2z} =1+z+2z^2+5z^3+\cdots
\end{align*}
be the Catalan generating function. The series \(c(z)\)
satisfies $c(z)=1+zc(z)^2$.

\begin{proposition}{\em \cite[Proposition 6]{Barry2020}} \label{prop:C-closed-form}
For every \(g(x)\in\Ring[[x]]\) with \(g(0)=1\), a closed form of the central transform is given by 
\begin{equation}\label{eq:C-closed-form}
\Ctrans(g)(z)=\frac1{\sqrt{1-4z}\,g\!\bigl(zc(z)^2\bigr)}.
\end{equation}
\end{proposition}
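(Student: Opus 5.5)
The plan is to treat $\Ctrans(g)(z)$ as a diagonal extraction and evaluate it with Lagrange inversion (equivalently, a residue computation). Set $F(x)=\frac{1}{(1-x)g\left(\frac{x}{1-x}\right)}$ and $\phi(x)=\frac{x}{1-x}$. By \cref{def:central-transform}, the coefficient of $z^n$ is
\[
[x^{2n}]F(x)\phi(x)^n=[x^{n}]F(x)\left(\frac{1}{1-x}\right)^n ,
\]
since $\phi(x)^n=x^n(1-x)^{-n}$. This has the form $[x^n]F(x)\psi(x)^n$ with $\psi(x)=\frac{1}{1-x}$ and $\psi(0)=1$.

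The key step is the standard Lagrange-inversion corollary: if $w=w(z)$ is the power series solving $w=z\psi(w)$, then
\[
\sum_{n\ge0}[x^n]F(x)\psi(x)^n\,z^n=\frac{F(w)}{1-z\psi'(w)}.
\]
I will invoke this form directly rather than rederive it; over a $\mathbb{Q}$-algebra $R$ it holds formally. Here $w=\frac{z}{1-w}$, so $w-w^2=z$ and therefore $w=\frac{1-\sqrt{1-4z}}{2}=zc(z)$.

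Next comes the simplification of the denominator. We have $z\psi'(w)=\frac{z}{(1-w)^2}=\frac{w}{1-w}$, using $z=w(1-w)$. Hence
\[
1-z\psi'(w)=\frac{1-2w}{1-w}=\frac{\sqrt{1-4z}}{1-w}.
\]
The $1-w$ factor cancels the $\frac{1}{1-w}$ in $F(w)$, which leaves $\frac{1}{\sqrt{1-4z}\,g\left(\frac{w}{1-w}\right)}$. Finally, $\frac{w}{1-w}=\frac{w^2}{z}=zc(z)^2$, where the first equality uses $1-w=z/w$. This yields \cref{eq:C-closed-form}.

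The main obstacle is only the correct form of the Lagrange corollary, namely that the factor is $1/(1-z\psi'(w))$ and not $w'$ or the like. If in doubt, I would verify it with the residue computation $[x^n]F\psi^n=\operatorname{res}_x F(x)\psi(x)^n x^{-n-1}$ and summing the geometric series in $z$. As a sanity check, take $g=1$: the formula gives $\sum_n\binom{2n-1}{n-1}$-type central coefficients of $\frac{1}{1-x}(1-x)^{-n}$, namely $\binom{2n}{n}$, and these indeed have generating function $1/\sqrt{1-4z}$.
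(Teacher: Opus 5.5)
Your argument is correct. It is not a variant of anything in the paper, because the paper never proves this proposition: it imports it from Barry's Proposition~6 and uses it as a black box, most importantly in \cref{prop:q-moment-general}. Your proposal therefore replaces a citation with a self-contained derivation.

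Your reduction $[x^{2n}]F(x)\phi(x)^n=[x^n]F(x)\psi(x)^n$ with $\psi(x)=1/(1-x)$, followed by the Lagrange corollary $\sum_{n}[x^n]F\psi^n z^n=F(w)/(1-z\psi'(w))$ with $w=z\psi(w)$, is the standard way to compute central coefficients of a Riordan array $(d(x),x\psi(x))$. Every computation checks out: $w=zc(z)$, $1-z\psi'(w)=(1-2w)/(1-w)=\sqrt{1-4z}/(1-w)$, the factor $1-w$ cancels against the one in $F(w)$, and $w/(1-w)=zc(z)^2$.

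The citation buys the paper brevity. Your route means the one analytic input the whole paper rests on is actually proved. It also fits the paper's own computation: the substitution $z=u/(1+u)^2$ in \cref{prop:q-moment-general} is exactly the inverse of your $u=w/(1-w)=zc(z)^2$, with $c(z)=1+u=1/(1-w)$.

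Two cosmetic points. Since $w$ is not a unit in $\Ring[[z]]$, it is cleaner to justify $1-w=1/c(z)$ via $c(z)\bigl(1-zc(z)\bigr)=1$, which follows from $c=1+zc^2$, rather than writing $z/w$. Also, the $\binom{2n-1}{n-1}$ phrase in your sanity check is a slip: for $g=1$ one gets directly $[x^n](1-x)^{-n-1}=\binom{2n}{n}$.
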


\begin{definition}[Hankel transform]\label{def:hankel-transform}
For a sequence \((\mu_n)_{n\ge0}\) in a commutative ring, its \(n\)-th \emph{Hankel determinant} is
\[h_n=\det H_n,\qquad H_n=(\mu_{i+j})_{0\le i,j\le n}.\]
The sequence \((h_n)_{n\ge0}\) is called the \emph{Hankel transform} of \((\mu_n)_{n\ge0}\).
\end{definition}

Hankel determinants have drawn significant attention from scholars in combinatorics and number theory. Although the Hankel determinants for particular sequences have been investigated in several works \cite{chien2022hankel,Elouafi,Mu-Wang,Mu-Wang-Yeh,QH-Hou,Wang-Xin}, considerable effort has also been directed toward the development of continued fraction techniques for their evaluation \cite{Krattenthaler05, FlajoletDM,Gessel-Xin,Sulanke-Xin,HanGuoNiu,Wang-Xin-Zhai}. 

Given a \(g(x)\in\Ring[[x]]\) with \(g(0)=1\), we write $M_g(z)=\Ctrans(g)(z)=\sum_{n\ge0}\mu_n(g)z^n$,
and define
\[ h_n(g)=\det\bigl(\mu_{i+j}(g)\bigr)_{0\le i,j\le n},\qquad H_g(x)=\sum_{n\ge0}h_n(g)x^n.
\]
In \cite{Barry2020}, Barry's Conjectures~12, 17, 24, and~27 concern the rationality and explicit form of \(H_g(x)\) for four elementary rational families. The main contribution of this paper is to prove the following four theorems, namely, Barry's four conjectures.

\begin{theorem}{\em (\cite[Conjecture~12]{Barry2020})}\label{thm:c12}
Let \(a,b\) belong to a field of characteristic zero, and set
\[g_{a,b}(x)=\frac{1+ax}{1+bx}.
\]
Then
\begin{align*}
H_{g_{a,b}}(x)=\frac{1-b(a+b)x}{1-2(1+ab)x+(a+b)^2x^2}.
\end{align*}
Equivalently,
\begin{align*}
 h_0=1,\quad h_1=2+ab-b^2,\quad h_n=2(1+ab)h_{n-1}-(a+b)^2h_{n-2}\qquad(n\ge2).
\end{align*}
\end{theorem}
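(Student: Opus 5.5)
The plan is to read $h_n(g_{a,b})$ as a Gram determinant of an explicit linear functional, and then change to a Chebyshev-type basis in which the Gram matrix becomes Toeplitz-plus-Hankel with geometric entries.

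\textbf{Step 1: a constant-term formula for the moments.} By \cref{prop:C-closed-form}, $M_g(z)=1/\bigl(\sqrt{1-4z}\,g(zc(z)^2)\bigr)$. Substitute $z=u/(1+u)^2$, so that $c(z)=1+u$, $zc(z)^2=u$ and $\sqrt{1-4z}=(1-u)/(1+u)$. Then
\[
M_g(z)=\frac{1+u}{(1-u)\,g(u)}.
\]
Lagrange inversion for $z=u/\varphi(u)$ with $\varphi(u)=(1+u)^2$ introduces the factor $1-u\varphi'(u)/\varphi(u)=(1-u)/(1+u)$. This should give
\[
\mu_n(g)=\mathrm{CT}_u\Bigl[(u+2+u^{-1})^n\,G(u)\Bigr],\qquad G(u)=\frac{1}{g(u)}=\frac{1+bu}{1+au}=\sum_{m\ge0}\gamma_m u^m,
\]
with $\gamma_0=1$ and $\gamma_m=(b-a)(-a)^{m-1}$ for $m\ge1$. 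As a check, $g=1$ returns $\binom{2n}{n}$.

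Define $\Lfunc(p)=\mathrm{CT}_u[p(u+2+u^{-1})G(u)]$ on $R[x]$. Then $\mu_n=\Lfunc(x^n)$, so $H_n$ is the Gram matrix of $1,x,\dots,x^n$ under $(p,q)\mapsto\Lfunc(pq)$.

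\textbf{Step 2: change to shifted monic Chebyshev polynomials.} Let $P_0=1$ and, for $k\ge1$, let $P_k$ be the monic polynomial of degree $k$ with $P_k(u+2+u^{-1})=u^k+u^{-k}$. The transition matrix from $(x^k)$ to $(P_k)$ is unitriangular, so $h_n=\det\bigl(\Lfunc(P_iP_j)\bigr)_{0\le i,j\le n}$.

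Only nonpositive powers of $u$ survive against $G$, via $\mathrm{CT}(u^{-m}G)=\gamma_m$. Expanding the products then gives:
\begin{itemize}
\item $\Lfunc(P_iP_j)=\gamma_{i+j}+\gamma_{|i-j|}+[i=j]$ for $i,j\ge1$;
\item $\Lfunc(P_0P_j)=\gamma_j$ for $j\ge1$, and $\Lfunc(P_0^2)=1$.
\end{itemize}
So the Gram matrix is the identity-type part plus a Toeplitz part plus a Hankel part, and both of the latter are built from geometric sequences with ratio $-a$.

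\textbf{Step 3: reduce to a band matrix.} Subtracting $(-a)$ times row $i-1$ from row $i$, and then doing the same on columns, kills all geometric tails. These operations are unitriangular, so they do not change the determinant. The result should be a pentadiagonal (possibly tridiagonal after one more elimination) matrix with constant interior entries that are polynomials in $a,b$, plus a boundary correction in the first two rows and columns.

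\textbf{Step 4: recurrence and initial values.} Expand the band determinant along its last row to get a linear recurrence with constant coefficients; the characteristic polynomial should reduce to $1-2(1+ab)x+(a+b)^2x^2$. Compute $h_0=1$ and $h_1=\mu_0\mu_2-\mu_1^2$ directly from $\mu_0=1$, $\mu_1=2+b-a$, $\mu_2=\mathrm{CT}[(u+2+u^{-1})^2G]$, checking that $h_1=2+ab-b^2$. Matching the first terms then yields the stated rational $H_{g_{a,b}}(x)$. Degenerate cases such as $a=0$ or $a=b$ should follow by polynomial identity in $a,b$.

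\textbf{Main obstacle.} The hard part is Steps 3 and 4. The row and column reduction may leave a pentadiagonal band rather than a tridiagonal one, in which case the naive expansion gives a higher-order recurrence. If so, I would use a $2\times2$ transfer-matrix (block) formulation for the band determinant and show that its characteristic polynomial factors, leaving exactly the quadratic $1-2(1+ab)x+(a+b)^2x^2$. The boundary rows must also be handled carefully so that the correct numerator $1-b(a+b)x$ comes out.
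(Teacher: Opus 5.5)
You follow the paper's route exactly, but the proposal stops at the step where the proof has its content. Your $P_k$ are the paper's $q_k$, since $q_k(u+2+u^{-1})=u^k+u^{-k}$. Your constant-term functional is the one fixed by $\sum_k\Lfunc(q_k)u^k=1/g(u)$, which is \cref{prop:q-moment-general}. Your Gram entries $\gamma_{i+j}+\gamma_{|i-j|}+[i=j]$ are the Chebyshev product rules. Your row/column operation is the change of basis $p_i=q_i+aq_{i-1}$ ($i\ge1$) used in \cref{c12:prop:tridiagonal}. Your values $\mu_1=2+b-a$ and $h_1=2+ab-b^2$ are correct. However, you never compute the reduced matrix, and you are unsure whether it is tridiagonal or pentadiagonal. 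The coefficients $2(1+ab)$ and $(a+b)^2$ therefore appear only as what the characteristic polynomial ``should'' reduce to. As written, Steps~3--4 assert the conclusion rather than derive it.

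The gap closes quickly, and no transfer-matrix fallback is needed, because the reduced matrix is exactly tridiagonal. In your constant-term model, for $i\ge2$,
\[
p_i(u+2+u^{-1})=u^{i}(1+au^{-1})+u^{-i}(1+au),\qquad p_1(u+2+u^{-1})=u+u^{-1}(1+au),
\]
so the factor $1+au$ cancels the denominator of $G(u)=(1+bu)/(1+au)$. Take $i,j\ge2$ and expand $p_ip_jG$:
\begin{itemize}
\item the term $u^{-i-j}(1+au)(1+bu)$ has zero constant term, since $i+j\ge4$;
\item the term $u^{i+j}(1+au^{-1})^2G(u)$ contains only positive powers of $u$;
\item the two cross terms are $u^{\pm(i-j)}\bigl(au^{-1}+(1+ab)+bu\bigr)$.
\end{itemize}
Hence $\Lfunc(p_ip_j)$ equals $2(1+ab)$ for $i=j$, $a+b$ for $|i-j|=1$, and $0$ for $|i-j|\ge2$. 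The same bookkeeping on the first two rows gives $\Lfunc(p_0^2)=1$, $\Lfunc(p_0p_1)=b$, $\Lfunc(p_1^2)=2+ab$, $\Lfunc(p_1p_2)=a+b$, and zeros elsewhere in rows $0$ and $1$. This is precisely the matrix of \cref{c12:prop:tridiagonal}. The paper obtains it by a seven-case computation using $\nu_{n+1}=-a\nu_n$ for $n\ge1$; the cancellation above is a shorter way to see the same band structure. Expanding along the last row then gives $h_n=2(1+ab)h_{n-1}-(a+b)^2h_{n-2}$ for $n\ge2$. Together with $h_0=1$ and your $h_1$, this yields the stated $H_{g_{a,b}}(x)$.
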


We note that Conjectures 17 and 24 in Barry's original paper have typos. We correct them and give the correct results as follows.

\begin{theorem}{\em (\cite[Conjecture~17]{Barry2020})}\label{thm:c17}
Let \(a,b\) belong to a field of characteristic zero, and set
\[
  g_{a,b}(x)=\frac{1+ax}{1-bx^2}.
\]
Then
\begin{equation}\label{c17:eq:main-H}
  H_{g_{a,b}}(x)=\frac{1-3bx+b^2(2+b)x^2-b^4x^3}{1-2(1+b)x+\bigl(a^2+4b-2a^2b+2b^2+a^2b^2\bigr)x^2-2b^2(1+b)x^3+b^4x^4}.
\end{equation}
Equivalently, if $\sigma=a(1-b)$, then
\begin{align}
  & h_0=1,\quad h_1=2-b,\quad h_2=4-2b-2b^2+b^3-\sigma^2, \nonumber 
  \\& h_3=8-4b-6b^2+2b^3+b^4-(b+4)\sigma^2,\label{c17:eq:h0123-intro}
\end{align}
and, for \(n\ge4\),
$$h_n=2(1+b)h_{n-1}-\bigl(\sigma^2+4b+2b^2\bigr)h_{n-2}+2b^2(1+b)h_{n-3}-b^4h_{n-4}.$$
\end{theorem}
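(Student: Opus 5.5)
Our plan follows the strategy announced in the abstract: read the Hankel determinants as Gram determinants and change basis to monic Chebyshev polynomials, where the Gram matrix becomes banded. By \cref{prop:C-closed-form},
\[
M_g(z)=\frac{1}{\sqrt{1-4z}\,g(zc(z)^2)}=\frac{1-b\,(zc^2)^2}{\sqrt{1-4z}\,(1+a\,zc^2)} .
\]
We substitute $z=t/(1+t)^2$, so that $zc(z)^2=t$ and $\sqrt{1-4z}=(1-t)/(1+t)$. The key fact is that $\frac{1}{\sqrt{1-4z}}=\sum_n\binom{2n}{n}z^n$ is the moment generating function of the arcsine weight on $[0,4]$. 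Hence $\mu_n(g)=\Lfunc(x^n)$ for the functional
\[
\Lfunc(p)=\frac{1}{\pi}\int_0^4 p(x)\,w(x)\frac{dx}{\sqrt{x(4-x)}} .
\]
Writing $x=2+2\cos\theta=(1+e^{i\theta})(1+e^{-i\theta})$, i.e.\ $t=e^{i\theta}$ on the unit circle, the weight is
\[
w=\operatorname{Re}\frac{1-bt^2}{1+at}
\]
after symmetrizing in $t\leftrightarrow t^{-1}$. Equivalently, $\Lfunc$ can be defined formally by a constant-term extraction in $t$, which avoids analytic convergence issues over a general field. Then $h_n=\det(\Lfunc(x^{i+j}))$ is the Gram determinant of $1,x,\dots,x^n$ for the bilinear form $\langle p,q\rangle=\Lfunc(pq)$.

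Next we pass to the basis $P_k(x)$ of shifted monic Chebyshev polynomials, $P_k(2+2\cos\theta)=2\cos k\theta$ for $k\ge1$ and $P_0=1$. The transition matrix to $1,x,\dots,x^n$ is unitriangular, so $h_n=\det\bigl(\langle P_i,P_j\rangle\bigr)_{0\le i,j\le n}$. The Gram entries are
\[
\langle P_i,P_j\rangle=\mathrm{CT}_t\bigl[(t^i+t^{-i})(t^j+t^{-j})\,W(t)\bigr],
\]
with appropriate halving at index $0$. Here $W(t)=\frac12\Bigl(\frac{1-bt^2}{1+at}+\frac{1-bt^{-2}}{1+at^{-1}}\Bigr)$ is expanded as a formal Laurent series. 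This gives entries of the form $m_{|i-j|}+m_{i+j}$, a Toeplitz-plus-Hankel matrix, where $m_k$ is read off from the expansion of $W$.

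The denominator $1+at$ gives geometric tails $m_k\propto(-a)^k$ for $k\ge3$ instead of a finite band. To fix this we use the trick in the abstract: since $W(t)(1+at)(1+at^{-1})$ is a Laurent polynomial of degree at most $3$, we replace $P_j$ by $Q_j=P_j+aP_{j-1}$ (suitably adjusted near $0$). This is again a unitriangular change of basis, and in the new basis the Gram matrix is banded with bandwidth about $3$, i.e.\ heptadiagonal, with constant interior entries depending only on $a,b$ and a perturbed top-left corner.

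Finally, for a banded matrix of fixed bandwidth with constant interior, the determinants $h_n$ satisfy a linear recurrence. One route is transfer matrices or Laplace expansion along the last rows. The cleaner route is to compute the generating function directly: the characteristic polynomial of the recurrence comes from the symbol $\prod(1-\lambda x)$ over products of roots of the band symbol. We then verify that it matches the degree-4 denominator $1-2(1+b)x+(\sigma^2+4b+2b^2)x^2-2b^2(1+b)x^3+b^4x^4$, whose coefficients are symmetric in the way expected from a self-reciprocal Laurent symbol. The initial values $h_0,\dots,h_3$ in \eqref{c17:eq:h0123-intro} are checked by direct computation of small determinants from $\mu_0,\dots,\mu_6$. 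Together with the recurrence, they fix the numerator of \eqref{c17:eq:main-H}.

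The main obstacle is the bandwidth-$3$ step: showing the determinants of the banded matrix with the corrected corner satisfy exactly a fourth-order recurrence, rather than the naive larger order. I would try first to factor the banded Gram matrix as $LDL^{\trans}$ with a tridiagonal-times-tridiagonal structure coming from the factorization $1-bt^2=(1-\sqrt b\,t)(1+\sqrt b\,t)$. This would reduce to a $2\times2$ block-tridiagonal matrix, whose determinant recurrence has order $4$ via $2\times 2$ transfer matrices. If that fails, I would fall back on a direct symbolic Laplace expansion verified for generic $n$.
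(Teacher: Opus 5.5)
Your set-up matches the paper's. The shifted Chebyshev basis followed by the unitriangular change $Q_j=P_j+aP_{j-1}$ is exactly the paper's basis $p_n=q_n+aq_{n-1}$, and your constant-term functional repackages \cref{prop:q-moment-general}. There is one bookkeeping slip: $(1+at)(1+at^{-1})W(t)=1+\tfrac{\sigma}{2}(t+t^{-1})-\tfrac{b}{2}(t^2+t^{-2})$, with $\sigma=a(1-b)$, has degree $2$, not $3$. So the Gram matrix is pentadiagonal, not heptadiagonal. Its interior entries are $2,\sigma,-b$, and the perturbed corner entries are $1$, $0$, $2-b$ in positions $(0,0)$, $(0,1)$, $(1,1)$.

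\textbf{The gap.} The genuine gap is the step you yourself flag as the main obstacle, which is where the content of the theorem lies. For a symmetric pentadiagonal symbol the roots come in pairs $z_i^{\pm1}$. Your ``products of roots'' recipe therefore also produces $e\,z_1z_1^{-1}=e\,z_2z_2^{-1}=e=-b$, in addition to the four roots of the claimed quartic. For an arbitrary corner, the determinants satisfy only a \emph{fifth}-order recurrence, with generating-function denominator $(1+bx)Q(x)$, where $Q$ is the stated quartic. Whether the $(-b)^n$ mode actually occurs is decided by the corner entries, not by the symbol, so the symbol alone cannot verify the degree-$4$ denominator.

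Neither remedy you propose fixes this. First, the interior symbol $2+\sigma(t+t^{-1})-b(t^2+t^{-2})=(1-bt^2)(1+at^{-1})+(1-bt^{-2})(1+at)$ is a sum and in general has no factor $1-bt^2$. Hence $(1-\sqrt b\,t)(1+\sqrt b\,t)$ does not induce an $LDL^{\trans}$ factorization of the sections. Second, a $2\times2$ block transfer-matrix argument expresses the determinants quadratically in powers of a $4\times4$ matrix. That again gives six characteristic values and no scalar order-$4$ recurrence in $n$.

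\textbf{The missing idea.} This is the route of \cref{c17:lem:pentadiagonal-continuant}. Carry out the Laplace expansion once, for an arbitrary symmetric pentadiagonal matrix whose rows are constant $(e,c,d)$ from the third row on. Expand along the last row, recurse on the cofactors of the two off-diagonal entries, and eliminate them. This gives the general fifth-order recurrence. That recurrence is equivalent to $R_m=eR_{m-1}$, where
\[
R_m=D_m-AD_{m-1}+BD_{m-2}-CD_{m-3}+ED_{m-4},
\]
with $A=d-2e$, $B=c^2-2de+2e^2$, $C=e^2(d-2e)$, $E=e^4$. These are exactly the coefficients of your quartic when $d=2$, $c=\sigma$, $e=-b$. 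The extraneous mode is then removed by one finite check, $R_4=0$, computed from the explicit corner determinants $D_0,\dots,D_4$. This is where the specific corner entries matter. After that, the generating function follows from $h_0,\dots,h_3$ as you describe.
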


\begin{theorem}{\em (\cite[Conjecture~24]{Barry2020})}\label{thm:c24}
Let \(a\) belong to a field of characteristic zero, and set
\begin{equation}\label{c24:eq:ga-intro}
  g_a(x)=\frac{1+ax}{1-x^3}.
\end{equation}
Then
\begin{equation}\label{c24:eq:main-result}
  H_{g_a}(x)=\frac{1+(1-a)x+x^2-x^3}{1-(a+1)x+\bigl((a+1)^2-1\bigr)x^2-(a+1)x^3+x^4}.
\end{equation}
Equivalently,
\begin{align}\label{c24:eq:h0123-main}
  h_0=1,\quad h_1=2, \quad  h_2=3-a^2,\quad h_3=3-3a^2-a^3,
\end{align}
and, for \(n\ge4\),
\begin{equation}\label{c24:eq:h-recurrence-main}
h_n=(a+1)h_{n-1}-\bigl((a+1)^2-1\bigr)h_{n-2}+(a+1)h_{n-3}-h_{n-4}.
\end{equation}
\end{theorem}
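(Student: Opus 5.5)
The plan is to turn the Hankel determinant into a Gram determinant for an explicit linear functional. Then I would pick a unitriangular change of basis that makes the Gram matrix banded, and finally read off the order‑four recurrence \eqref{c24:eq:h-recurrence-main} from a transfer‑matrix expansion.

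\textbf{Step 1: the moment functional.} By \cref{prop:C-closed-form},
\[
M_{g_a}(z)=\frac{1-u^3}{\sqrt{1-4z}\,(1+au)},\qquad u=zc(z)^2.
\]
Write \(G(u)=(1-u^3)/(1+au)=\sum_{k\ge0}\varphi_k u^k\). Here \(\varphi_k=(-a)^k\) for \(k\le2\), and \(\varphi_k=(-a)^k-(-a)^{k-3}\) for \(k\ge3\). The basic identity I would use is
\[
[z^n]\frac{u^k}{\sqrt{1-4z}}=\binom{2n}{n-k}=\mathrm{CT}_w\,(2+w+w^{-1})^n w^{k}.
\]
Indeed \(u/(1+u)^2=z\), and this is a standard Lagrange‑inversion computation. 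Hence \(\mu_n=\Lfunc(x^n)\) for the functional
\[
\Lfunc(p)=\mathrm{CT}_w\,p(2+w+w^{-1})\,\Phi(w),\qquad \Phi(w)=\sum_{k\ge0}\varphi_k w^k,
\]
and \(h_n=\det\bigl(\Lfunc(x^{i+j})\bigr)_{0\le i,j\le n}\). For any monic basis \(Q_0,\dots,Q_n\) with \(\deg Q_m=m\), the change of basis is unitriangular. Therefore \(h_n=\det\bigl(\Lfunc(Q_iQ_j)\bigr)_{0\le i,j\le n}\).

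\textbf{Step 2: Chebyshev basis and band structure.} I would take the shifted monic Chebyshev polynomials \(P_0=1\) and \(P_m(x)=w^m+w^{-m}\) for \(m\ge1\), where \(x=2+w+w^{-1}\). Then
\[
\Lfunc(P_iP_j)=\varphi_{|i-j|}+\varphi_{i+j}
\]
for \(i,j\ge1\), with the obvious halving when \(i\) or \(j\) is \(0\). This is a Toeplitz‑plus‑Hankel matrix in the sequence \(\varphi\). Because \(\varphi\) satisfies \(\varphi_k+a\varphi_{k-1}=0\) except at \(k\in\{0,3\}\), I would use the modified basis \(Q_m=P_m+aP_{m-1}\) (still monic of degree \(m\)). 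Where needed I would also apply the same operation on columns, which corresponds to multiplying \(\Phi\) by \((1+aw^{\pm1})\). The goal is that every entry becomes a finite combination of the coefficients of the Laurent polynomial \((1+aw^{-1})(1-w^3)\) or its reflection. The resulting Gram matrix should then vanish outside a fixed band, roughly \(|i-j|\le4\), apart from a bounded top‑left corner coming from the index‑\(0\) row and column and the Hankel part \(\varphi_{i+j}\). The Hankel part only contributes near the corner, since \(i+j\) small forces \(i,j\) small.

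\textbf{Step 3: recurrence and initial values.} Once the matrix is banded with constant interior entries (a banded Toeplitz matrix with a finite boundary perturbation), I would expand \(\det\) along the last row and column via a transfer matrix. This shows that \(h_n\) satisfies a linear recurrence with constant coefficients. I expect the characteristic polynomial to be the palindromic quartic
\[
t^4-(a+1)t^3+\bigl((a+1)^2-1\bigr)t^2-(a+1)t+1.
\]
This would be checked by factoring the symbol \((1+aw)(1+aw^{-1})(1-w^3)(1-w^{-3})\)‑type expression of the band. I would then compute \(h_0,\dots,h_3\) directly from \(\mu_0,\dots,\mu_6\) to get \eqref{c24:eq:h0123-main}. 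The numerator in \eqref{c24:eq:main-result} follows by multiplying the initial values by the denominator.

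\textbf{Main obstacle.} The hard part is Step 2: finding the right unitriangular basis so the band really is finite, and controlling the boundary corner. If the transfer‑matrix recurrence comes out with a larger order than four, I would identify the extra factor and show it cancels against the initial data.
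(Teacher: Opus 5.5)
\textbf{There is a genuine gap in Step 3, which is where the content of the theorem lies.}

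Your Steps 1 and 2 are sound. One slip: for $i=j\ge1$ you get $\Lfunc(P_i^2)=\varphi_{2i}+2\varphi_0$, not $\varphi_{2i}+\varphi_0$.

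With $Q_m=P_m+aP_{m-1}$, the Gram matrix is heptadiagonal. Its interior entries are $2,\;a,\;-a,\;-1$ at distances $0,1,2,3$, and with $\lambda=a+1$ its symbol is
\[
2+a(w+w^{-1})-a(w^2+w^{-2})-(w^3+w^{-3})=(1-w)(1-w^{-1})(1+w+w^{-1})(\lambda+w+w^{-1}).
\]
This is not the product $(1+aw)(1+aw^{-1})(1-w^3)(1-w^{-3})$ you guessed.

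A transfer-matrix expansion for bandwidth $3$ lives in a $\binom{6}{3}=20$-dimensional space. Its characteristic roots are products of three roots from the multiset $\{1,1,\omega,\omega^{-1},\beta,\beta^{-1}\}$, where $\omega^{\pm1}$ are the roots of $1+z+z^2$ and $\beta^{\pm1}$ those of $1+\lambda z+z^2$.
\begin{itemize}
\item The quartic in the theorem accounts only for the four products $\omega^{\pm1}\beta^{\pm1}$.
\item The modes $1$, $\omega^{\pm1}$, $\beta^{\pm1}$, with multiplicities, must all drop out. This includes the polynomial-in-$n$ terms coming from the double root $w=1$.
\item Factoring the symbol cannot show this, because which modes survive is decided by the boundary corner.
\end{itemize}
Your fallback, ``show the extra factor cancels against the initial data,'' is the statement to be proved. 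Done by brute force, it means deriving an explicit order-$20$ recurrence in $a$ and checking about twenty initial determinants symbolically. None of that is supplied.

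\textbf{The missing idea is a factorization that explains the collapse to order four.} The paper uses the partial sums $r_n=Q_0+\cdots+Q_n$, with generating function $(1+au)(1+u)/((1+u)^2-Xu)$.
\begin{itemize}
\item This removes the factor $(1-w)(1-w^{-1})$ and gives a pentadiagonal Gram matrix $\mathsf{G}_N$ with kernel $\Pi_a(u,v)/(1-uv)$.
\item One has $\mathsf{G}_N=L_NU_N+e_1w_N^{\trans}$, where $L_N=I+\lambda S_N+S_N^2$ and $U_N=I+S_N^{\trans}+(S_N^{\trans})^2$ are unit triangular. So the corner discrepancy is rank one.
\item The matrix determinant lemma then gives $h_n=1+\sum_{k<n}\gamma_k$, with $\gamma_k=((1-a)\alpha_k-a\alpha_{k+1})\beta_k$. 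Here $\alpha$ and $\beta$ are the coefficient sequences of $1/(1+z+z^2)$ and $1/(1+\lambda z+z^2)$.
\item A product of two second-order recurrent sequences satisfies the recurrence of $P\otimes Q$, whose characteristic polynomial is exactly the quartic. The factor $1/(1-x)$ produced by the partial sum cancels in $H$.
\end{itemize}

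The same trick rescues your own basis. Your heptadiagonal matrix equals $(I-S_N)\mathsf{G}_N(I-S_N)^{\trans}$, which is $\bigl((I-S_N)L_N\bigr)\bigl(U_N(I-S_N^{\trans})\bigr)$ plus a rank-one correction. Without some such triangular factorization with low-rank defect, Step 3 does not establish the order-four recurrence.
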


\begin{theorem}{\em (\cite[Conjecture~27]{Barry2020})}\label{thm:c27}
Let \(r,s\) belong to a field of characteristic zero, and set
\[g_{r,s}(x)=\frac{1-(r-2)x+x^2}{1-sx-x^2}.
\]
Then
\begin{equation}\label{c27:eq:main-generating-function}
H_{g_{r,s}}(x)=\frac{1-s(r+s-2)x}{1+2s(2-r)x+4s^2x^2}.
\end{equation}
Equivalently,
\begin{align}
 h_0=1, \quad h_1=s(r-s-2),\quad h_n=2s(r-2)h_{n-1}-4s^2h_{n-2},\qquad(n\ge2).\label{c27:eq:h-recurrence-intro}
\end{align}
\end{theorem}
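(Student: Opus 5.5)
The statement to prove is Theorem~\ref{thm:c27}. I will follow the Gram-determinant framework announced in the abstract. By Proposition~\ref{prop:C-closed-form},
\[
M_g(z)=\frac{1}{\sqrt{1-4z}\,g(zc(z)^2)} .
\]

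\textbf{Step 1: turn $M_g$ into a moment sequence.} Put $u=zc(z)^2$, so that $z=u/(1+u)^2$. The factor $1/\sqrt{1-4z}$ is the generating function of the central binomial coefficients $\binom{2n}{n}$, which are the moments of the arcsine functional $L_0(p)=\frac1\pi\int_0^4 p(t)\,\frac{dt}{\sqrt{t(4-t)}}$. Its monic orthogonal polynomials are the shifted Chebyshev polynomials
\[
P_0=1,\qquad P_n(t)=2\,T_n\!\bigl(\tfrac{t-2}{2}\bigr)\quad (n\ge1),
\]
i.e.\ $P_n=w^n+w^{-n}$ with $t=2+w+w^{-1}$. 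The key identity is
\[
\sum_n L_0(t^n p)\,z^n=L_0\!\left(\frac{p}{1-zt}\right).
\]
In the $w$-variable, $\frac{1}{\sqrt{1-4z}}\,u^k$ is exactly the generating function of $L_0(t^n P_k)$ up to the normalization of $P_0$. Expand $1/g(u)$ as a power series $\sum_k \gamma_k u^k$. Then $\mu_n=\Lfunc(t^n)$ for the linear functional
\[
\Lfunc(P_k)=\epsilon_k\gamma_k ,
\]
where $\epsilon_0=1$ and $\epsilon_k=\tfrac12$ or $1$ according to the normalization. This rests on the standard fact $L_0(P_jP_k)=2\delta_{jk}$ for $j,k\ge1$, and $L_0(P_0)=1$.

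\textbf{Step 2: make the Gram matrix banded.} The Hankel matrix $H_n$ is the Gram matrix of $1,t,\dots,t^n$ under $\Lfunc$. Changing to the monic basis $P_0,\dots,P_n$ is unitriangular, so
\[
h_n=\det\bigl(\Lfunc(P_iP_j)\bigr)_{0\le i,j\le n}.
\]
Since $P_iP_j=P_{i+j}+P_{|i-j|}$ for $i,j\ge1$, this is a Toeplitz-plus-Hankel matrix in the sequence $\Lfunc(P_k)$. For $g_{r,s}$ we have
\[
\frac{1}{g(u)}=\frac{1-su-u^2}{1-(r-2)u+u^2}.
\]
This is not finite, so here I would instead exploit that $\gamma_k$ satisfies the two-term recurrence $\gamma_k=(r-2)\gamma_{k-1}-\gamma_{k-2}$ for $k\ge3$. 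In terms of $w$, this says $\Lfunc$ is the arcsine functional perturbed by a rational factor. The cleaner route is to multiply the functional by the denominator: define $\tilde\Lfunc(p)=\Lfunc\bigl(q(t)\,p\bigr)$ for a suitable polynomial $q$. Note that $1-(r-2)u+u^2=u\,(u+u^{-1}-(r-2))$ and $u+u^{-1}=t-2$ in the $w=u$ picture. Hence $\Lfunc\bigl((t-r)\,\cdot\bigr)$ has only finitely many nonzero values on the $P_k$, namely for $k\le 2$ or so. In a basis adapted to this, the Gram matrix becomes a finite-band Toeplitz-plus-Hankel matrix. Concretely, I would take the basis $Q_k=P_k-\lambda P_{k-1}$, or equivalently use the Christoffel-type relation between Hankel determinants of $\mu$ and those of $\Lfunc((t-r)\,\cdot)$. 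This gives a tridiagonal, or bordered tridiagonal, Gram matrix whose interior entries are constant, depending on $r,s$ only.

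\textbf{Step 3: evaluate the determinant.} Expanding a tridiagonal determinant with constant interior entries along the last row gives a linear recurrence of order two. The characteristic polynomial should come out as $1-2s(r-2)x+4s^2x^2$. The initial values $h_0=1$ and $h_1=s(r-s-2)$ are computed directly from
\[
\mu_0=1,\qquad \mu_1=2+\text{(coefficient)},\qquad \mu_2 ,
\]
which come from expanding $M_g$ to order $z^2$. Matching these initial values pins down the numerator $1-s(r+s-2)x$.

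\textbf{Main obstacle.} The hardest step is Step 2: choosing the right basis (or Christoffel transform) so that the Gram matrix is genuinely banded with constant entries. The boundary rows coming from $P_0$, with its different normalization, will need separate handling. If the determinant does not reduce cleanly, I would fall back on the Chebyshev symmetry $P_iP_j=P_{i+j}+P_{|i-j|}$ and perform explicit row operations $R_i\leftarrow R_i-(r-2)R_{i-1}+R_{i-2}$. These operations kill the geometric tail of $\gamma_k$ and leave a banded matrix whose determinant satisfies the stated recurrence.
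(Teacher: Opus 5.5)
Your Step 1 is right (the normalization is $\epsilon_k=1$, i.e.\ $\Lfunc(P_k)=\gamma_k$). So is your observation that $\Lfunc((t-r)\,\cdot\,)$ is finitely supported on the $P_k$. In fact $\Lfunc((t-r)P_k)=-s\,\delta_{k0}$ exactly, i.e.\ $\Lfunc((X-r)f)=-s\,\Afunc(f)$, and this is the paper's key relation. The gap is in the step that is supposed to use it.

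The basis you commit to, $Q_k=P_k-\lambda P_{k-1}$, only works when $\Lfunc(P_k)$ is eventually geometric, as for $(1+ax)/(1+bx)$ and $(1+ax)/(1-bx^2)$. Here $\gamma_{k+1}-(r-2)\gamma_k+\gamma_{k-1}=0$, so $\gamma_k$ mixes $\lambda^k$ and $\lambda^{-k}$ with $\lambda+\lambda^{-1}=r-2$. For $j\ge 2$ and $i-j\ge 2$ one gets $\Lfunc(Q_iQ_j)=(1+\lambda^2-(r-2)\lambda)\gamma_{i-j}+(\gamma_{i+j}-2\lambda\gamma_{i+j-1}+\lambda^2\gamma_{i+j-2})$.
\begin{itemize}
\item Killing the first term forces $\lambda^2-(r-2)\lambda+1=0$, so $\lambda\notin\mathbb{Q}[r,s]$.
\item The second term then equals $(\lambda^{-1}-\lambda)(\gamma_{i+j-1}-\lambda\gamma_{i+j-2})$, which is nonzero for generic $r,s$.
\end{itemize}
So that Gram matrix is not banded. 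The ``Christoffel-type relation'' alternative is never specified and is not equivalent to this basis change. Step~3 only asserts what the characteristic polynomial should be. The decisive computation is therefore missing.

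The missing idea is to put the factor $X-r$ into the basis itself: $p_0=1$ and $p_n=(X-r)q_{n-1}$ for $n\ge 1$.
\begin{itemize}
\item $\Lfunc(p_j)=-s\,\Afunc(q_{j-1})$ gives the border row $(1,-s,0,\dots)$.
\item For $i,j\ge 1$, $\Lfunc(p_ip_j)=-s\,\Afunc\bigl((X-r)q_{i-1}q_{j-1}\bigr)$. This is tridiagonal by the three-term recurrence of the $q_n$ and their $\Afunc$-orthogonality.
\item The entries are $s(r-2)$ at $(1,1)$, $2s(r-2)$ further down the diagonal, and $-2s$ beside it.
\item Expanding along the last row gives $h_n=2s(r-2)h_{n-1}-4s^2h_{n-2}$.
\end{itemize}

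Your row-operation fallback can also be completed, but its outcome is not a banded matrix. Replace row $i\ge 1$ of $(\Lfunc(P_iP_j))$ by the row of $(t-r)P_{i-1}$. For $i\ge 3$ this is $R_i-(r-2)R_{i-1}+R_{i-2}$; at the boundary it is $R_2-(r-2)R_1+2R_0$ and $R_1-(r-2)R_0$. Rows $1,\dots,n$ then become $-s\,e_0$ and $-2s\,e_{i-1}$, so only row $0$, namely $(\gamma_0,\dots,\gamma_n)$, reaches column $n$. Hence $h_n=2^{n-1}s^n\gamma_n$ for $n\ge 1$, and both the recurrence and the generating function follow immediately.
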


All four proofs are polynomial identities in their parameters.  We therefore work over the relevant polynomial ring in each application. The common mechanism behind the proofs is that a shifted monic Chebyshev basis converts the associated Hankel form into a banded or nearly banded Gram form. 

The paper is organized as follows. \Cref{sec:hankel-gram} records the Gram-determinant interpretation of Hankel determinants.
Moreover, we introduce the shifted Chebyshev basis and prove some related lemmas that are needed.
The four families are treated in \cref{sec:c12,sec:c17,sec:c24,sec:c27}, respectively.

\section{Gram matrix and a shifted Chebyshev basis}\label{sec:hankel-gram}

\begin{definition}[Gram matrix]
Let $\mathcal{H}$ be an inner product space over $\mathbb{R}$ (or $\mathbb{C}$), and let 
$\{v_1, v_2, \dots, v_n\} \subset \mathcal{H}$ be a finite set of vectors.
The \emph{Gram matrix} (also called the \emph{Gramian}) of $\{v_1, \dots, v_n\}$ 
is the $n \times n$ matrix $\mathbf{G} = (G_{ij})$ whose entries are given by
\[G_{ij} = \langle v_i, v_j \rangle, \qquad i,j = 1,\dots,n.\]
\end{definition}

Define the \(\Ring\)-linear functional
\begin{equation}\label{eq:moment-functional}
\Lfunc:\Ring[X]\longrightarrow\Ring,\qquad\Lfunc(X^n)=\mu_n\quad(n\ge0).
\end{equation}
Then $\Lfunc(X^iX^j)=\mu_{i+j}$, so \(H_n\) is the Gram matrix of the ordered monomial basis
\(1,X,\ldots,X^n\) with respect to the symmetric bilinear form $\langle f,g\rangle_{\Lfunc}=\Lfunc(fg)$.

The following elementary observation allows us to replace the monomial basis by any monic polynomial basis.
\begin{lemma}\label{lem:monic-change-basis}
Let \(p_0(X),\ldots,p_n(X)\in\Ring[X]\) satisfy
\[p_j(X)=X^j+\text{terms of degree less than }j.\]
Set $\Gamma_n=(\Lfunc(p_ip_j))_{0\le i,j\le n}$. Then $\det\Gamma_n=h_n$.
\end{lemma}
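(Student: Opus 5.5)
The plan is to realize the passage from monomials to the polynomials \(p_j\) as a unitriangular change of basis and then take determinants. Since each \(p_j\) is monic of degree \(j\), write
\[
p_i(X)=\sum_{k=0}^{n} u_{ik}X^k,\qquad 0\le i\le n,
\]
and collect the coefficients into the \((n+1)\times(n+1)\) matrix \(U=(u_{ik})_{0\le i,k\le n}\). The degree condition gives \(u_{ik}=0\) for \(k>i\), and monicity gives \(u_{ii}=1\). Thus \(U\) is lower unitriangular with entries in \(\Ring\), and \(\det U=1\). This holds over any commutative ring, so no field hypothesis is needed.

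Next, I will express \(\Gamma_n\) in terms of \(H_n\). By \(\Ring\)-linearity of \(\Lfunc\) and bilinearity of \(\langle f,g\rangle_{\Lfunc}=\Lfunc(fg)\),
\[
\Lfunc(p_ip_j)=\sum_{k,l=0}^{n}u_{ik}u_{jl}\,\Lfunc(X^{k+l})=\sum_{k,l=0}^{n}u_{ik}\,\mu_{k+l}\,u_{jl},
\]
which is exactly the \((i,j)\)-entry of \(UH_nU^{\trans}\). Hence \(\Gamma_n=UH_nU^{\trans}\).

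Finally, multiplicativity of the determinant over the commutative ring \(\Ring\) gives
\[
\det\Gamma_n=\det U\cdot\det H_n\cdot\det U^{\trans}=h_n.
\]
No step is a genuine obstacle. The only point needing care is that the Gram-matrix language (the definition is stated for inner product spaces over \(\mathbb{R}\) or \(\mathbb{C}\)) is used here only formally. The argument needs just the symmetric bilinear form \(\Lfunc(fg)\) on \(\Ring[X]\) and the identity \(\det(AB)=\det A\det B\), which holds over any commutative ring.
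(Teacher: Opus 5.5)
Your proposal is correct and follows the paper's proof exactly: the paper also writes $p_i=\sum_r u_{i,r}X^r$, notes that the coefficient matrix $U_n$ is lower unitriangular, obtains $\Gamma_n=U_nH_nU_n^{\mathsf T}$ from linearity of $\Lfunc$, and takes determinants. Your remark that only the formal bilinear form $\Lfunc(fg)$ over a commutative ring is needed is a fair clarification rather than a change of method.
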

\begin{proof}
Write $p_i(X)=\sum_{r=0}^i u_{i,r}X^r$  and let \(U_n=(u_{i,r})_{0\le i,r\le n}\).  Since \(p_i\) is monic of
degree \(i\), the matrix \(U_n\) is lower triangular and all its diagonal entries are \(1\).  Hence $\det U_n=1$.
For \(0\le i,j\le n\),
\begin{align*}
(\Gamma_n)_{i,j}=\Lfunc\left(\sum_{r=0}^i u_{i,r}X^r\sum_{s=0}^j u_{j,s}X^s\right)=\sum_{r=0}^i\sum_{s=0}^j u_{i,r}u_{j,s}\Lfunc(X^{r+s})
=\sum_{r=0}^i\sum_{s=0}^j u_{i,r}\mu_{r+s}u_{j,s}.
\end{align*}
Therefore $\Gamma_n=U_nH_nU_n^{\mathsf T}$.
Taking determinants gives $\det\Gamma_n=(\det U_n)^2\det H_n=h_n$. This completes the proof.
\end{proof}

Let \(T_n(Y)\) be the Chebyshev polynomial of the first kind (see \cite[A053120]{Sloane23} or \cite{TJ.Rivlin}), defined by
\begin{equation}\label{eq:chebyshev-recurrence}
T_0(Y)=1,\qquad T_1(Y)=Y, \qquad T_{n+1}(Y)=2YT_n(Y)-T_{n-1}(Y) \quad(n\ge 1).
\end{equation}

\begin{lemma}\label{lem:chebyshev-generating-function}
The generating function of the Chebyshev polynomials is given by 
\begin{equation}\label{eq:chebyshev-generating-function}
\sum_{n\ge 0}T_n(Y)u^n=\frac{1-Yu}{1-2Yu+u^2}.
\end{equation}
\end{lemma}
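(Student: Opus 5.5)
The plan is to prove \cref{eq:chebyshev-generating-function} as an identity in \(\Ring[Y][[u]]\) by multiplying the left-hand side by \(1-2Yu+u^2\) and checking that everything beyond degree one in \(u\) cancels by the recurrence \cref{eq:chebyshev-recurrence}. Since \(1-2Yu+u^2\) has constant term \(1\), it is invertible in \(\Ring[Y][[u]]\). It therefore suffices to show
\[
(1-2Yu+u^2)\sum_{n\ge0}T_n(Y)u^n = 1-Yu .
\]

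Write \(F(u)=\sum_{n\ge0}T_n(Y)u^n\) and expand the product. The coefficient of \(u^0\) is \(T_0=1\). The coefficient of \(u^1\) is \(T_1-2YT_0=Y-2Y=-Y\). For \(n\ge2\), the coefficient of \(u^n\) is \(T_n-2YT_{n-1}+T_{n-2}\). Applying the recurrence \cref{eq:chebyshev-recurrence} with index \(n-1\ge1\), this coefficient vanishes. Hence the product equals \(1-Yu\), and dividing by the unit \(1-2Yu+u^2\) gives the claim.

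No real obstacle is expected. The only points needing care are the two boundary coefficients, which use the initial conditions \(T_0=1\) and \(T_1=Y\), and the remark that the division is legitimate in formal power series because the denominator has constant term \(1\).
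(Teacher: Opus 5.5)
Your proposal is correct and follows essentially the same route as the paper: multiply by $1-2Yu+u^2$, use $T_0=1$ and $T_1=Y$ for the coefficients of $u^0$ and $u^1$, and let the recurrence kill every coefficient of $u^n$ for $n\ge 2$. Your remark that $1-2Yu+u^2$ is a unit in the power series ring (constant term $1$) makes explicit a step the paper leaves implicit.
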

\begin{proof}
Let $S(Y,u)=\sum_{n\ge 0}T_n(Y)u^n$. Then
\begin{align*}
(1-2Yu+u^2)S(Y,u)&=T_0(Y)+\bigl(T_1(Y)-2YT_0(Y)\bigr)u 
\\ &\qquad +\sum_{n\ge 2}\bigl(T_n(Y)-2YT_{n-1}(Y)+T_{n-2}(Y)\bigr)u^n.
\end{align*}
The sum over \(n\ge 2\) vanishes by \cref{eq:chebyshev-recurrence}. Since
\(T_0(Y)=1\) and \(T_1(Y)=Y\), the remaining expression is $1+(Y-2Y)u=1-Yu$.
This prove \cref{eq:chebyshev-generating-function}.
\end{proof}

Define shifted polynomials \(q_n(X)\) by
\begin{equation}\label{eq:q-definition}
 q_0(X)=1,
 \qquad
 q_n(X)=2T_n\!\left(\frac{X-2}{2}\right)
 \quad(n\ge 1).
\end{equation}

For the multiplication identities below, let \(r\in R\) be arbitrary.

\begin{lemma}\label{lem:q-basic-properties}
The polynomials \(q_n\) satisfy the following statements.
\begin{enumerate}
\item Each \(q_n\) is monic of degree \(n\).

\item Their generating function is
\begin{equation}\label{eq:q-generating-function}
 \sum_{n\ge 0}q_n(X)u^n
 =
 \frac{1-u^2}{(1+u)^2-Xu}.
\end{equation}

\item If \(m,n\ge 1\) and \(m\neq n\), then
\begin{equation}\label{eq:q-product-off-diagonal}
 q_m(X)q_n(X)
 =
 q_{m+n}(X)+q_{|m-n|}(X).
\end{equation}
If \(n\ge 1\), then
\begin{equation}\label{eq:q-product-diagonal}
 q_n(X)^2=q_{2n}(X)+2q_0(X).
\end{equation}

\item The multiplication recurrences are
\begin{align}
 (X-r)q_0&=q_1+(2-r)q_0,\label{eq:Xr-q0}\\
 (X-r)q_1&=q_2+(2-r)q_1+2q_0,\label{eq:Xr-q1}\\
 (X-r)q_n&=q_{n+1}+(2-r)q_n+q_{n-1}
 \quad(n\ge 2).\label{eq:Xr-qn}
\end{align}
\end{enumerate}
\end{lemma}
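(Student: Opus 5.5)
The cleanest route is to set $Y=(X-2)/2$, so that $X=2Y+2$, and transfer the classical identities for $T_n$ to the $q_n$. The two inputs are the product formula $2T_mT_n=T_{m+n}+T_{|m-n|}$ and the recurrence \cref{eq:chebyshev-recurrence}. The only real care needed is the special normalization $q_0=1$ rather than $2T_0=2$, which is exactly what produces the exceptional coefficients in \cref{eq:q-product-diagonal} and \cref{eq:Xr-q1}.

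For (1), an induction on \cref{eq:chebyshev-recurrence} shows that $T_n(Y)$ has leading term $2^{n-1}Y^n$ for $n\ge1$. Hence
\[
2T_n\bigl((X-2)/2\bigr)=2\cdot 2^{n-1}(X/2)^n+\cdots=X^n+\cdots,
\]
and $q_0=1$ is trivially monic.

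For (2), apply \cref{lem:chebyshev-generating-function} with $Y=(X-2)/2$. This gives
\[
\sum_n q_nu^n=2\,\frac{1-Yu}{1-2Yu+u^2}-1,
\]
since the $n=0$ term $2T_0=2$ must be replaced by $1$. With $2Y=X-2$, the denominator becomes $1-(X-2)u+u^2=(1+u)^2-Xu$. The numerator is
\[
2-(X-2)u-(1+u)^2+Xu=1-u^2,
\]
which is \cref{eq:q-generating-function}.

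For (3), I would first prove $2T_mT_n=T_{m+n}+T_{|m-n|}$ as a polynomial identity. Fix $m\ge n$ and induct on $n$: the cases $n=0$ and $n=1$ are immediate from the recurrence, and the step uses $T_{n+1}=2YT_n-T_{n-1}$. Alternatively, it suffices to check it for $Y=\cos\theta$ with infinitely many real $\theta$ and invoke polynomial identity. Multiplying by $2$ gives
\[
q_mq_n=2T_{m+n}+2T_{|m-n|}.
\]
When $m\ne n$, both indices are at least $1$, so the right side is $q_{m+n}+q_{|m-n|}$. When $m=n$, the term $2T_0=2=2q_0$ gives \cref{eq:q-product-diagonal}. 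I would also note that the polynomial identity transfers to any commutative $\mathbb{Q}$-algebra $R$ because it holds in $\mathbb{Z}[X]$.

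For (4), write $X-r=q_1+(2-r)q_0$, since $q_1=X-2$. Then
\[
(X-r)q_n=q_1q_n+(2-r)q_n.
\]
It remains to expand $q_1q_n$ by (3):
\begin{itemize}
\item for $n=0$, $q_1q_0=q_1$, giving \cref{eq:Xr-q0};
\item for $n=1$, \cref{eq:q-product-diagonal} gives $q_1^2=q_2+2q_0$, giving \cref{eq:Xr-q1};
\item for $n\ge2$, \cref{eq:q-product-off-diagonal} gives $q_1q_n=q_{n+1}+q_{n-1}$, giving \cref{eq:Xr-qn}.
\end{itemize}

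I expect the only mildly delicate step to be the Chebyshev product formula, with its bookkeeping of the $q_0$ versus $2T_0$ normalization. Everything else is direct substitution.
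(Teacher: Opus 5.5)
Your proposal is correct and follows the paper's proof in all essentials. Parts (1) and (2) use the same leading-coefficient induction and the same substitution $Y=(X-2)/2$ into the Chebyshev generating function, and part (3) is reduced to $2T_mT_n=T_{m+n}+T_{|m-n|}$ with the same $q_0$ versus $2T_0$ bookkeeping. The differences are minor: the paper proves the product formula through the Laurent substitution $Y=(w+w^{-1})/2$, of which your cosine evaluation is the same idea and your induction a valid alternative. The paper also obtains (4) directly from the three-term recurrence, whereas you write $X-r=q_1+(2-r)q_0$ and apply (3).
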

\begin{proof}
For \(n\ge 1\), the leading coefficient of \(T_n(Y)\) is \(2^{n-1}\). This follows by induction from \cref{eq:chebyshev-recurrence}: it is true for \(T_1(Y)=Y\), and if the leading coefficient of \(T_n\) is \(2^{n-1}\), then the term \(2YT_n(Y)\) contributes leading coefficient \(2^n\) to \(T_{n+1}\), while \(T_{n-1}\) has lower degree. Therefore $2T_n\!\left(\frac{X-2}{2}\right)$ has leading coefficient $2\cdot 2^{n-1}\cdot 2^{-n}=1$. Thus \(q_n\) is monic of degree \(n\). The assertion is immediate for
\(q_0=1\).

Set $Y=\frac{X-2}{2}$. By \cref{eq:chebyshev-generating-function},
\begin{align*}
\sum_{n\ge 0}q_n(X)u^n &= 1+2\sum_{n\ge 1}T_n(Y)u^n= 2\sum_{n\ge 0}T_n(Y)u^n-1\\
&=\frac{2(1-Yu)}{1-2Yu+u^2}-1=\frac{1-u^2}{1-2Yu+u^2}.
\end{align*}
Since \(2Y=X-2\), $1-2Yu+u^2=(1+u)^2-Xu$, which proves \cref{eq:q-generating-function}.

To prove the product identities, introduce an indeterminate \(w\) and put $Y=\frac{w+w^{-1}}{2}$.
An induction based on \cref{eq:chebyshev-recurrence} gives
\begin{equation}\label{eq:chebyshev-laurent-representation}
T_n\!\left(\frac{w+w^{-1}}{2}\right)=\frac{w^n+w^{-n}}{2}.
\end{equation}
Indeed, both sides agree for \(n=0,1\), and both satisfy the same recurrence.
Consequently,
\begin{align*}
2T_m(Y)T_n(Y)&=\frac{(w^m+w^{-m})(w^n+w^{-n})}{2}=\frac{w^{m+n}+w^{-(m+n)}}{2}+\frac{w^{m-n}+w^{-(m-n)}}{2}
\\&=T_{m+n}(Y)+T_{|m-n|}(Y).
\end{align*}
The substitution map
\[\mathbb{Q}[Y]\longrightarrow\mathbb{Q}[w,w^{-1}], \qquad Y\longmapsto\frac{w+w^{-1}}{2},
\]
is injective: if a nonzero polynomial has degree \(d\), then its Laurent
expansion after substitution has a nonzero coefficient of \(w^d\). Hence the
preceding identity is a polynomial identity in \(Y\). Multiplying by \(2\)
and using \cref{eq:q-definition} gives
\cref{eq:q-product-off-diagonal} when \(m\neq n\). When \(m=n\), the term
\(T_0(Y)=1\) produces \(2q_0\), and we obtain
\cref{eq:q-product-diagonal}.

Finally, \(q_1=X-2\), and $q_2=2T_2\!\left(\frac{X-2}{2}\right)=(X-2)^2-2$.
Thus $(X-2)q_0=q_1$, $(X-2)q_1=q_2+2q_0$.
For \(n\ge 2\), the Chebyshev recurrence gives $(X-2)q_n=q_{n+1}+q_{n-1}$.
Adding \((2-r)q_n\) to each of these identities proves \cref{eq:Xr-q0,eq:Xr-q1,eq:Xr-qn}.
\end{proof}

The next proposition is valid for every central transform.

\begin{proposition}\label{prop:q-moment-general}
Let \(g(0)=1\), $\Ctrans(g)(z)=\sum_{n\ge0}\mu_nz^n$, and \(\Lfunc(X^n)=\mu_n\).  For the polynomials \(q_n\) in \cref{eq:q-definition}, we have 
\begin{equation}\label{eq:q-moment-general}
\sum_{n\ge0}\Lfunc(q_n)u^n=\frac1{g(u)}.
\end{equation}
\end{proposition}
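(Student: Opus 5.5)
We will compute $\sum_{n\ge0}\Lfunc(q_n)u^n$ by applying $\Lfunc$ coefficientwise (in $u$) to the generating function \cref{eq:q-generating-function}, and then compare with the closed form \cref{eq:C-closed-form} of \cref{prop:C-closed-form}.

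First we expand the generating function of \cref{lem:q-basic-properties} as a power series in $X$. Write
\[
\frac{1-u^2}{(1+u)^2-Xu}=\frac{1-u^2}{(1+u)^2}\cdot\frac{1}{1-\frac{Xu}{(1+u)^2}}=\frac{1-u}{1+u}\sum_{k\ge0}X^k\frac{u^k}{(1+u)^{2k}}.
\]
For each fixed power of $u$ only finitely many $k$ contribute, because $u^k$ divides the $k$-th term. So we may apply the $\Ring$-linear functional $\Lfunc$ coefficientwise. Setting $v=u/(1+u)^2$, this gives
\[
\sum_{n\ge0}\Lfunc(q_n)u^n=\frac{1-u}{1+u}\sum_{k\ge0}\mu_k v^k=\frac{1-u}{1+u}\,\Ctrans(g)(v).
\]

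Next we evaluate $\Ctrans(g)(v)$ with \cref{eq:C-closed-form}. The key substitution identities are the following.
\begin{itemize}
\item We have $1-4v=\frac{(1+u)^2-4u}{(1+u)^2}=\frac{(1-u)^2}{(1+u)^2}$, so $\sqrt{1-4v}=\frac{1-u}{1+u}$. This is the branch with constant term $1$, which is the one meant for formal power series.
\item We have $v\,c(v)^2=u$. Indeed, $c(v)=\frac{1-\sqrt{1-4v}}{2v}=\frac{2u/(1+u)}{2u/(1+u)^2}=1+u$. Hence $vc(v)^2=\frac{u}{(1+u)^2}(1+u)^2=u$.
\end{itemize}
Substituting both into \cref{eq:C-closed-form} gives $\Ctrans(g)(v)=\frac{1+u}{1-u}\cdot\frac{1}{g(u)}$. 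Multiplying by $\frac{1-u}{1+u}$ yields $\frac1{g(u)}$, which proves \cref{eq:q-moment-general}.

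The main care point is the legitimacy of substituting $v=u/(1+u)^2$, which has zero constant term, into formal power series. This composition is well defined. The algebraic identities for $\sqrt{1-4v}$ and $c(v)$ should be justified formally rather than analytically. For $c(v)$, it suffices to check that $1+u$ satisfies $c=1+vc^2$, i.e.\ $1+u=1+\frac{u}{(1+u)^2}(1+u)^2$, and then use uniqueness of the power-series solution with constant term $1$. The identity for $\sqrt{1-4v}$ then follows from $\sqrt{1-4v}=1-2vc(v)$.
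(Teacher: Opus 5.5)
Your proof is correct and follows the paper's argument essentially step for step. You apply $\Lfunc$ coefficientwise to \cref{eq:q-generating-function} to obtain $\frac{1-u}{1+u}\,\Ctrans(g)\bigl(u/(1+u)^2\bigr)$, then evaluate it with $\sqrt{1-4v}=\frac{1-u}{1+u}$ and $v\,c(v)^2=u$. Your justification of $c(v)=1+u$, via uniqueness of the solution of $c=1+vc^2$ with constant term $1$, is a valid substitute for the paper's appeal to uniqueness of the formal square root with constant term $1$.
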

\begin{proof}
Apply \(\Lfunc\), coefficientwise in \(u\), to
\cref{eq:q-generating-function}.  Since the denominator has constant
term \(1\), all expressions are well-defined formal power series.  We
obtain
\begin{align}
\sum_{n\ge0}\Lfunc(q_n)u^n
&=\Lfunc\left(
    \frac{1-u^2}{(1+u)^2-Xu}
  \right)
=\frac{1-u^2}{(1+u)^2}
    \Lfunc\left(
    \frac1{1-\dfrac{Xu}{(1+u)^2}}
    \right)\nonumber\\
&=\frac{1-u^2}{(1+u)^2}
    \Lfunc\left(
    \sum_{k\ge0}X^k
    \left(\frac{u}{(1+u)^2}\right)^k
    \right)
=\frac{1-u^2}{(1+u)^2}
    \sum_{k\ge0}\mu_k
    \left(\frac{u}{(1+u)^2}\right)^k\nonumber\\
&=\frac{1-u}{1+u}
    \Ctrans(g)\!\left(\frac{u}{(1+u)^2}\right).
\label{eq:q-moment-before-substitution}
\end{align}
Set $z=\frac{u}{(1+u)^2}$.
Then $1-4z=\frac{(1-u)^2}{(1+u)^2}$.
Both formal square roots below have constant term \(1\); hence
\begin{equation}\label{eq:sqrt-u-substitution}
  \sqrt{1-4z}=\frac{1-u}{1+u}.
\end{equation}
Furthermore, $c(z)=\frac{1-\sqrt{1-4z}}{2z}=1+u$.
Therefore
\begin{equation}\label{eq:zc2-u-substitution}
  zc(z)^2 =\frac{u}{(1+u)^2}(1+u)^2 =u.
\end{equation}
By \cref{eq:C-closed-form,eq:sqrt-u-substitution,eq:zc2-u-substitution},
\begin{align*}
  \Ctrans(g)\!\left(\frac{u}{(1+u)^2}\right)=\frac1{\dfrac{1-u}{1+u}\,g(u)}=\frac{1+u}{1-u}\frac1{g(u)}.
\end{align*}
Substitution into \cref{eq:q-moment-before-substitution} gives
\cref{eq:q-moment-general}.
\end{proof}

\begin{definition}\label{def:central-binomial-functional}
Let \(\Afunc:\Ring[X]\to\Ring\) be the \(\Ring\)-linear functional defined by
\begin{align*}
\Afunc(X^n)=\binom{2n}{n}\qquad(n\ge0).
\end{align*}
\end{definition}

Its generating function is
\begin{equation}\label{eq:central-binomial-generating-function}
\sum_{n\ge0}\Afunc(X^n)z^n=\sum_{n\ge0}\binom{2n}{n}z^n=\frac{1}{\sqrt{1-4z}}.
\end{equation}

\begin{proposition}\label{prop:q-orthogonality}
For all \(i,j\ge 0\), we have
\begin{equation}\label{eq:q-orthogonality}
\Afunc(q_iq_j) =
\begin{cases}
 1,&i=j=0,\\
 2,&i=j\ge 1,\\
 0,&i\neq j.
 \end{cases}
\end{equation}
\end{proposition}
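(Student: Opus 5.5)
The plan is to first compute $\Afunc(q_n)$ for every $n$, and then reduce $\Afunc(q_iq_j)$ to single values $\Afunc(q_m)$ by means of the product identities of \cref{lem:q-basic-properties}.

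\textbf{Step 1: the single values.} We repeat the argument of \cref{prop:q-moment-general} with $\Lfunc$ replaced by $\Afunc$. Since $\sum_{n\ge0}\Afunc(X^n)z^n=1/\sqrt{1-4z}$ by \cref{eq:central-binomial-generating-function}, applying $\Afunc$ coefficientwise in $u$ to \cref{eq:q-generating-function} gives
\[
\sum_{n\ge0}\Afunc(q_n)u^n=\frac{1-u^2}{(1+u)^2}\cdot\frac{1}{\sqrt{1-4z}},\qquad z=\frac{u}{(1+u)^2}.
\]
By \cref{eq:sqrt-u-substitution} we have $\sqrt{1-4z}=(1-u)/(1+u)$, so the right-hand side equals
\[
\frac{1-u}{1+u}\cdot\frac{1+u}{1-u}=1 .
\]
Hence $\Afunc(q_0)=1$ and $\Afunc(q_n)=0$ for all $n\ge1$.

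An alternative route is to note that $\Afunc$ is itself a central transform. Taking $g=1$ in \cref{eq:C-closed-form} gives $\Ctrans(1)(z)=1/\sqrt{1-4z}$, so $\Afunc$ is the functional $\Lfunc$ attached to $g=1$. \Cref{prop:q-moment-general} then yields $\sum_{n\ge0}\Afunc(q_n)u^n=1/g(u)=1$ directly. I would use this shorter route.

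\textbf{Step 2: the products.} We split into cases according to $i$ and $j$.
\begin{itemize}
\item If $i=0$ or $j=0$, then $q_iq_j=q_{\max(i,j)}$, and Step 1 gives the value $1$ when $i=j=0$ and $0$ otherwise.
\item If $i,j\ge1$ with $i\ne j$, then \cref{eq:q-product-off-diagonal} gives $\Afunc(q_iq_j)=\Afunc(q_{i+j})+\Afunc(q_{|i-j|})=0$, since both indices are at least $1$.
\item If $i=j\ge1$, then \cref{eq:q-product-diagonal} gives $\Afunc(q_i^2)=\Afunc(q_{2i})+2\Afunc(q_0)=2$.
\end{itemize}

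\textbf{Where care is needed.} The only delicate point is the formal justification in Step 1: applying $\Afunc$ coefficientwise to the generating function, and choosing the square-root branch with constant term $1$. Both were already handled in the proof of \cref{prop:q-moment-general}, so invoking that proposition with $g=1$ disposes of this issue.
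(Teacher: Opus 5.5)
Your proof is correct and follows the paper's argument. The paper likewise applies $\Afunc$ coefficientwise to \cref{eq:q-generating-function}, uses $\sqrt{1-4u/(1+u)^2}=(1-u)/(1+u)$ to obtain $\sum_n\Afunc(q_n)u^n=1$, and then runs the same three-case reduction via \cref{eq:q-product-off-diagonal,eq:q-product-diagonal}. Your shortcut is also valid: since $\Ctrans(1)(z)=1/\sqrt{1-4z}$, $\Afunc$ is the moment functional attached to $g=1$, and quoting \cref{prop:q-moment-general} for it is just a repackaging of the same computation.
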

\begin{proof}
First apply \(\Afunc\), coefficientwise in \(u\), to
\cref{eq:q-generating-function}. Since
\[
 \frac{1}{(1+u)^2-Xu}
 =
 \frac{1}{(1+u)^2}
 \frac{1}{1-\dfrac{Xu}{(1+u)^2}},
\]
we obtain
\begin{align} \label{eq:A-on-q-generating-function}
 \sum_{n\ge 0}\Afunc(q_n)u^n=
 \frac{1-u^2}{(1+u)^2}
 \sum_{k\ge 0}\Afunc(X^k)
 \left(\frac{u}{(1+u)^2}\right)^k
 =\frac{1-u^2}{(1+u)^2}\frac{1}{\sqrt{1-\dfrac{4u}{(1+u)^2}}}.
\end{align}
By uniqueness of the square root with constant term \(1\),
\begin{equation}\label{eq:formal-square-root-substitution}
 \sqrt{1-\frac{4u}{(1+u)^2}}=\frac{1-u}{1+u}.
\end{equation}
Substituting \cref{eq:formal-square-root-substitution} into
\cref{eq:A-on-q-generating-function} gives $\sum_{n\ge 0}\Afunc(q_n)u^n=1$.
Therefore
\begin{equation}\label{eq:A-q-values}
 \Afunc(q_0)=1,
 \qquad
 \Afunc(q_n)=0\quad(n\ge 1).
\end{equation}

If one of \(i,j\) is zero and the other is positive, then $\Afunc(q_iq_j)=0$ by \cref{eq:A-q-values}. If \(i,j\ge 1\) and \(i\neq j\), then \cref{eq:q-product-off-diagonal} gives $\Afunc(q_iq_j)=\Afunc(q_{i+j})+\Afunc(q_{|i-j|})=0$, because both indices are positive. Finally, if \(i=j\ge 1\), then \cref{eq:q-product-diagonal} gives 
$\Afunc(q_i^2)=\Afunc(q_{2i})+2\Afunc(q_0)=2$.
The case \(i=j=0\) is \(\Afunc(q_0^2)=\Afunc(1)=1\).
\end{proof}

\section{The family $(1+ax)/(1+bx)$}\label{sec:c12}

Throughout this section, $R=\mathbb{Q}[a,b]$.
Let
\[g_{a,b}(x)=\frac{1+ax}{1+bx},\qquad M(z)=\Ctrans(g_{a,b})(z)=\sum_{n\ge0}\mu_nz^n,
\]
and let \(\Lfunc(X^n)=\mu_n\). By \cref{prop:C-closed-form},
\begin{equation}\label{c12:eq:moment-gf}
M(z) =\frac{1+bzc(z)^2}{\sqrt{1-4z}\bigl(1+azc(z)^2\bigr)}.
\end{equation}
Set $\nu_n=\Lfunc(q_n)$ ($n\ge0$). The universal identity \cref{prop:q-moment-general} gives
\begin{equation}\label{c12:eq:nu-gf}
\sum_{n\ge0}\nu_nu^n=\frac{1+bu}{1+au}.
\end{equation}
Consequently,
\begin{equation}\label{c12:eq:nu-explicit}
  \nu_0=1,
  \qquad
  \nu_n=(b-a)(-a)^{n-1}
  \quad(n\ge1).
\end{equation}

For the tridiagonalization, it is convenient to introduce
\begin{equation}\label{c12:eq:rho-delta}
\rho=-a,
\qquad
\Delta=b-a.
\end{equation}
Then \cref{c12:eq:nu-explicit} becomes
\begin{equation}\label{c12:eq:nu-rho}
\nu_0=1,
\qquad
\nu_n=\Delta\rho^{n-1}
\quad(n\ge 1).
\end{equation}
In particular,
\begin{equation}\label{c12:eq:nu-geometric}
\nu_{n+1}=\rho\nu_n
\qquad(n\ge 1).
\end{equation}
We shall repeatedly use the elementary identities
\begin{equation}\label{c12:eq:rho-delta-identities}
\Delta-\rho=b,
\qquad
\Delta-2\rho=a+b,
\qquad
-\rho b=ab.
\end{equation}

\subsection{Tridiagonalization of the Gram matrix}\label{c12:sec:tridiagonalization}

Define a new monic polynomial sequence by
\begin{equation}\label{c12:eq:p-definition}
\begin{aligned}
p_0(X)&=q_0(X)=1,\\
p_n(X)&=q_n(X)-\rho q_{n-1}(X)
       =q_n(X)+a q_{n-1}(X)
       \qquad(n\ge 1).
\end{aligned}
\end{equation}
Because \(q_n\) is monic of degree \(n\), each \(p_n\) is also monic of degree \(n\).
For \(i,j\ge 0\), set
\begin{equation}\label{c12:eq:Gamma-entry}
\Gamma_{i,j}=\Lfunc(p_i p_j).
\end{equation}

\begin{proposition}
\label{c12:prop:tridiagonal}
For every \(i,j\ge 0\),
\begin{equation}\label{c12:eq:Gamma-cases}
\Gamma_{i,j}
=
\begin{cases}
1,
& i=j=0,\\[2mm]
b,
& \{i,j\}=\{0,1\},\\[2mm]
2+ab,
& i=j=1,\\[2mm]
2(1+ab),
& i=j\ge 2,\\[2mm]
a+b,
& |i-j|=1\ \text{and}\ \min\{i,j\}\ge 1,\\[2mm]
0,
& \text{otherwise}.
\end{cases}
\end{equation}
Consequently, for \(n\ge 2\),
\begin{equation}\label{c12:eq:Gamma-matrix}
\Gamma_n
:=
\bigl(\Gamma_{i,j}\bigr)_{0\le i,j\le n}
=
\begin{pmatrix}
1 & b & 0 & \cdots & 0\\
b & 2+ab & a+b & \ddots & \vdots\\
0 & a+b & 2(1+ab) & \ddots & 0\\
\vdots & \ddots & \ddots & \ddots & a+b\\
0 & \cdots & 0 & a+b & 2(1+ab)
\end{pmatrix}.
\end{equation}
\end{proposition}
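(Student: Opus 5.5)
The plan is to compute the entries $\Lfunc(q_mq_n)$ first, and then expand $\Gamma_{i,j}=\Lfunc(p_ip_j)$ bilinearly using \cref{c12:eq:p-definition}.

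\textbf{Step 1: the entries $G_{m,n}=\Lfunc(q_mq_n)$.} By the product rules \cref{eq:q-product-off-diagonal,eq:q-product-diagonal}, for $m,n\ge1$ we have
\[G_{m,n}=\nu_{m+n}+\nu_{|m-n|}\quad(m\ne n),\qquad G_{n,n}=\nu_{2n}+2.\]
Trivially $G_{0,n}=\nu_n$. With the explicit values \cref{c12:eq:nu-rho}, this gives closed forms:
\[G_{0,0}=1,\qquad G_{0,n}=\Delta\rho^{n-1},\qquad G_{n,n}=2+\Delta\rho^{2n-1},\]
\[G_{m,n}=\Delta\rho^{m+n-1}+\Delta\rho^{|m-n|-1}\quad(m\neq n,\ m,n\ge1).\]

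\textbf{Step 2: expand $\Gamma_{i,j}$.} For $i,j\ge1$ we have
\[\Gamma_{i,j}=G_{i,j}-\rho G_{i-1,j}-\rho G_{i,j-1}+\rho^2G_{i-1,j-1}.\]
The guiding observation is that the "sum" parts $\nu_{i+j}$ telescope. The operator $1-\rho\,\cdot$ (shift down by one) annihilates the geometric tail by \cref{c12:eq:nu-geometric}. Applying it in both indices, the $\nu_{i+j}$ contributions become
\[\nu_{i+j}-2\rho\nu_{i+j-1}+\rho^2\nu_{i+j-2},\]
which vanishes once $i+j-2\ge1$. It is nonzero only when some index hits $0$, i.e.\ in a boundary case. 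The "difference" parts $\nu_{|i-j|}$ and the diagonal constant $2$ are handled separately. For $|i-j|\ge2$, the four terms
\[\nu_{|i-j|},\quad -\rho\nu_{|i-j|\pm1},\quad \rho^2\nu_{|i-j|}\]
combine to
\[\Delta\rho^{d-1}\bigl(1+\rho^2\bigr)-\rho\Delta\bigl(\rho^{d}+\rho^{d-2}\bigr)=0,\qquad d=|i-j|.\]
So $\Gamma_{i,j}=0$ when $|i-j|\ge2$ and $\min\{i,j\}\ge1$. The row $i=0$ is $\Gamma_{0,j}=\nu_j-\rho\nu_{j-1}$, which equals $\Delta-\rho=b$ for $j=1$ and $0$ for $j\ge2$.

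\textbf{Step 3: the near-diagonal entries.} I will check these by direct small computations, using \cref{c12:eq:rho-delta-identities}:
\begin{itemize}
\item $\Gamma_{1,1}=G_{1,1}-2\rho G_{0,1}+\rho^2=2+\Delta\rho-2\rho\Delta+\rho^2=2+\rho(\rho-\Delta)=2-\rho b=2+ab$.
\item For $i=j\ge2$: $\Gamma_{i,i}=(2+\nu_{2i})-2\rho(\nu_{2i-1}+\nu_1)+\rho^2(2+\nu_{2i-2})$. The $\nu_{2i}$ terms cancel by geometricity, leaving $2+2\rho^2-2\rho\Delta=2-2\rho b=2(1+ab)$.
\item For $j=i+1$ with $i\ge1$: the difference-part terms give $\nu_1-\rho(\nu_2+2)-\rho\cdot\nu_0\cdot[\text{care at }i=1]+\rho^2\nu_1$, and geometricity collapses the sum parts. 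The expected total is $\Delta-2\rho=a+b$.
\end{itemize}

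\textbf{Main obstacle.} The main difficulty is bookkeeping in the boundary cases $i=1$ or $j=1$. There, $G_{0,\cdot}$ lacks the "$+\nu_{|m-n|}$" and "$+2$" structure, and $\nu_0=1$ rather than $\Delta\rho^{-1}$, so the uniform telescoping fails. I will therefore treat separately the cases $(0,0),(0,1),(1,1),(1,2)$, $(i,i+1)$ with $i\ge2$, $(i,i)$ with $i\ge2$, and distance $\ge2$. Each is a short identity in $\rho,\Delta$, reduced via \cref{c12:eq:rho-delta-identities}. Symmetry of $\Gamma$ covers the remaining entries. Finally, \cref{c12:eq:Gamma-matrix} is just the assembly of these cases.
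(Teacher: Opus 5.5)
Your proposal is correct and follows essentially the same route as the paper: expand $\Gamma_{i,j}=\Lfunc\bigl((q_i-\rho q_{i-1})(q_j-\rho q_{j-1})\bigr)$ with the Chebyshev product rules, kill the ``sum'' part $\nu_{i+j}-2\rho\nu_{i+j-1}+\rho^2\nu_{i+j-2}$ and the ``difference'' part with the geometric relation $\nu_{n+1}=\rho\nu_n$ ($n\ge1$), and handle the boundary entries by hand. Two bookkeeping points need tightening: your four-term cancellation for $|i-j|\ge2$ requires $\min\{i,j\}\ge2$, so $\Gamma_{1,j}$ for $j\ge3$ must be checked separately, as in the paper's Case 4 via $(\nu_{j+1}-\rho\nu_j)-\rho(\nu_j-\rho\nu_{j-1})+(\nu_{j-1}-\rho\nu_{j-2})=0$; and in $\Gamma_{1,2}$ no $\nu_0$ term appears, since it equals $(\nu_3+\nu_1)-\rho\nu_2-\rho(\nu_2+2)+\rho^2\nu_1=\nu_1-2\rho=a+b$.
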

\begin{proof}
The bilinear form is symmetric because \(\Lfunc(fg)=\Lfunc(gf)\).  It is therefore enough to compute \(\Gamma_{i,j}\) for \(0\le i\le j\). We consider the following cases.

\noindent\emph{Case 1: the entries involving \(p_0\).}
Since \(p_0=q_0=1\), $\Gamma_{0,0}=\Lfunc(q_0)=\nu_0=1$.
Also,
\begin{align*}
\Gamma_{0,1}=\Lfunc(q_1-\rho q_0)=\nu_1-\rho\nu_0=\Delta-\rho=b,
\end{align*}
where the last equality follows from \cref{c12:eq:rho-delta-identities}.
For \(j\ge 2\), the recurrence \(\nu_j=\rho\nu_{j-1}\) is applicable because \(j-1\ge 1\).
Therefore
\begin{align*}
\Gamma_{0,j}=\Lfunc(q_j-\rho q_{j-1})=\nu_j-\rho\nu_{j-1}=0.
\end{align*}

\noindent\emph{Case 2: the entry \(\Gamma_{1,1}\).}
By \cref{eq:q-product-diagonal}, $q_1^2=q_2+2q_0$.
Hence
\begin{align*}
\Gamma_{1,1}&=\Lfunc\bigl((q_1-\rho q_0)^2\bigr)=\Lfunc(q_1^2)-2\rho\Lfunc(q_1)+\rho^2\Lfunc(q_0)=(\nu_2+2)-2\rho\nu_1+\rho^2\nu_0\\
&=\rho\Delta+2-2\rho\Delta+\rho^2=2-\rho\Delta+\rho^2=2-\rho(\Delta-\rho)=2-\rho b=2+ab.
\end{align*}

\noindent\emph{Case 3: the entry \(\Gamma_{1,2}\).}
Using $q_1q_2=q_3+q_1$, $q_1^2=q_2+2q_0$, we find
\begin{align*}
\Gamma_{1,2}&= \Lfunc\bigl((q_1-\rho q_0)(q_2-\rho q_1)\bigr)=\Lfunc(q_1q_2)-\rho\Lfunc(q_1^2)-\rho\Lfunc(q_2)+\rho^2\Lfunc(q_1)
\\&=(\nu_3+\nu_1)-\rho(\nu_2+2)-\rho\nu_2+\rho^2\nu_1
=\rho^2\Delta+\Delta-\rho(\rho\Delta+2)-\rho(\rho\Delta)+\rho^2\Delta
\\&=\Delta-2\rho=a+b.
\end{align*}

\noindent\emph{Case 4: the entries \(\Gamma_{1,j}\) for \(j\ge 3\).}
By \cref{eq:q-product-off-diagonal}, $q_1q_j=q_{j+1}+q_{j-1}$, $q_1q_{j-1}=q_j+q_{j-2}$.
Therefore
\begin{align*}
\Gamma_{1,j}&=\Lfunc\bigl((q_1-\rho q_0)(q_j-\rho q_{j-1})\bigr)=\Lfunc(q_1q_j)-\rho\Lfunc(q_1q_{j-1})-\rho\Lfunc(q_j)+\rho^2\Lfunc(q_{j-1})
\\&=\nu_{j+1}+\nu_{j-1}-\rho(\nu_j+\nu_{j-2})-\rho\nu_j+\rho^2\nu_{j-1}\\
&=(\nu_{j+1}-\rho\nu_j)-\rho(\nu_j-\rho\nu_{j-1})+(\nu_{j-1}-\rho\nu_{j-2}).
\end{align*}
Since \(j\ge 3\), all three uses of
\cref{c12:eq:nu-geometric} involve indices at least \(1\).  Each parenthesis
is therefore zero, and $\Gamma_{1,j}=0$ ($j\ge 3$).

\noindent\emph{Case 5: the diagonal entries \(\Gamma_{j,j}\) for \(j\ge 2\).}
By \cref{eq:q-product-diagonal,eq:q-product-off-diagonal},
\[
q_j^2=q_{2j}+2q_0,
\qquad
q_jq_{j-1}=q_{2j-1}+q_1,
\qquad
q_{j-1}^2=q_{2j-2}+2q_0.
\]
Thus
\begin{align*}
\Gamma_{j,j}
&=\Lfunc\bigl((q_j-\rho q_{j-1})^2\bigr)=\Lfunc(q_j^2)-2\rho\Lfunc(q_jq_{j-1})+\rho^2\Lfunc(q_{j-1}^2)
\\&=\bigl(\nu_{2j}-2\rho\nu_{2j-1}+\rho^2\nu_{2j-2}\bigr)+2-2\rho\nu_1+2\rho^2.
\end{align*}
The first parenthesis vanishes, because
\begin{align*}
\nu_{2j}-2\rho\nu_{2j-1}+\rho^2\nu_{2j-2}=(\nu_{2j}-\rho\nu_{2j-1})-\rho(\nu_{2j-1}-\rho\nu_{2j-2})=0.
\end{align*}
Since \(\nu_1=\Delta\), the remaining terms give
\begin{align*}
\Gamma_{j,j}=2-2\rho\Delta+2\rho^2=2-2\rho b=2(1+ab).
\end{align*}

\noindent\emph{Case 6: the adjacent entries \(\Gamma_{j,j+1}\) for \(j\ge 2\).}
The Chebyshev product formulas give
\[
q_{j+1}q_j=q_{2j+1}+q_1, \quad q_{j+1}q_{j-1}=q_{2j}+q_2,\quad q_j^2=q_{2j}+2q_0,\quad q_jq_{j-1}=q_{2j-1}+q_1.
\]
Therefore, we have
\begin{align*}
\Gamma_{j+1,j}&=\Lfunc\bigl((q_{j+1}-\rho q_j)(q_j-\rho q_{j-1})\bigr)
\\&=\Lfunc(q_{j+1}q_j)-\rho\Lfunc(q_{j+1}q_{j-1})-\rho\Lfunc(q_j^2)+\rho^2\Lfunc(q_jq_{j-1})\\
&=\bigl(\nu_{2j+1}-2\rho\nu_{2j} +\rho^2\nu_{2j-1}\bigr)+\nu_1-\rho\nu_2-2\rho+\rho^2\nu_1.
\end{align*}
The first parenthesis is zero by
\cref{c12:eq:nu-geometric}.  Since
\(\nu_2=\rho\nu_1\), the remaining expression becomes
$\nu_1-\rho\nu_2-2\rho+\rho^2\nu_1=\nu_1-\rho^2\nu_1-2\rho+\rho^2\nu_1=\nu_1-2\rho =a+b$.
Hence $\Gamma_{j,j+1}=a+b$ ($j\ge 2$).

\noindent\emph{Case 7: all entries at distance at least two vanish.}
Let \(i\ge j+2\) and \(j\ge 2\).  Put $s=i+j$, $k=i-j$.
Then \(k\ge 2\).  The product formulas yield
\begin{align*}
q_iq_j=q_s+q_k,\quad q_iq_{j-1}=q_{s-1}+q_{k+1},\quad
q_{i-1}q_j=q_{s-1}+q_{k-1},\quad q_{i-1}q_{j-1}=q_{s-2}+q_k.
\end{align*}
Consequently,
\begin{align*}
\Gamma_{i,j}&=\Lfunc\bigl((q_i-\rho q_{i-1})(q_j-\rho q_{j-1})\bigr)\\
&=(\nu_s+\nu_k)
-\rho(\nu_{s-1}+\nu_{k+1})
-\rho(\nu_{s-1}+\nu_{k-1})
+\rho^2(\nu_{s-2}+\nu_k)\\
&=\bigl(\nu_s-2\rho\nu_{s-1}+\rho^2\nu_{s-2}\bigr)
+\bigl((1+\rho^2)\nu_k-\rho\nu_{k+1}-\rho\nu_{k-1}\bigr)
\\& =(\nu_s-\rho\nu_{s-1}) -\rho(\nu_{s-1}-\rho\nu_{s-2})+(\nu_k-\rho\nu_{k-1})-\rho(\nu_{k+1}-\rho\nu_k)
\\& =0.
\end{align*}

Combining Cases 1--7, and using symmetry, proves \cref{c12:eq:Gamma-cases} and hence \cref{c12:eq:Gamma-matrix}.
\end{proof}

\subsection{Hankel determinants and their generating function}
\label{c12:sec:determinants}

\begin{proof}[Proof of \cref{thm:c12}]
Because the polynomials \(p_n\) are monic of degree \(n\),
\cref{lem:monic-change-basis} and \cref{c12:prop:tridiagonal} imply $h_n=\det\Gamma_n$, ($n\ge 0$).
We first compute the initial values.  For \(n=0\), $h_0=\det(1)=1$.
For \(n=1\),
\begin{align}\label{c12:eq:h1-computation}
h_1=
\det
\begin{pmatrix}
1&b\\
b&2+ab
\end{pmatrix}=2+ab-b^2.
\end{align}

Now let \(n\ge 2\).  The last row of \(\Gamma_n\) contains only two
nonzero entries:
\[
(\Gamma_n)_{n,n-1}=a+b,
\qquad
(\Gamma_n)_{n,n}=2(1+ab).
\]
Expanding \(\det\Gamma_n\) along the last row, the diagonal entry
contributes $2(1+ab)\det\Gamma_{n-1}=2(1+ab)h_{n-1}$.
The entry \(a+b\) in column \(n-1\) has cofactor sign $(-1)^{n+(n-1)}=-1$.
After deleting row \(n\) and column \(n-1\), the resulting minor has, in its last column, a single nonzero entry \(a+b\), located in its last row.  Expanding that minor along its last column therefore gives
$(a+b)\det\Gamma_{n-2}=(a+b)h_{n-2}$.
Thus the total contribution of the \((n,n-1)\)-entry is $-(a+b)^2h_{n-2}$.
We have proved
\begin{equation}\label{c12:eq:det-recurrence}
h_n=2(1+ab)h_{n-1}-(a+b)^2h_{n-2}\qquad(n\ge 2).
\end{equation}

Let $H(x)=\sum_{n\ge 0}h_nx^n$.
Multiplying \cref{c12:eq:det-recurrence} by \(x^n\) and summing over
\(n\ge 2\), we obtain
\begin{align*}
\sum_{n\ge 2}h_nx^n
&=
2(1+ab)\sum_{n\ge 2}h_{n-1}x^n
-(a+b)^2\sum_{n\ge 2}h_{n-2}x^n.
\end{align*}
Each sum can be expressed in terms of \(H(x)\):
\[\sum_{n\ge 2}h_nx^n=H(x)-h_0-h_1x,\quad \sum_{n\ge 2}h_{n-1}x^n=x\bigl(H(x)-h_0\bigr),\quad \sum_{n\ge 2}h_{n-2}x^n=x^2H(x)
\]
Therefore
\begin{align}
\bigl(1-2(1+ab)x+(a+b)^2x^2\bigr)H(x)
&=
h_0+\bigl(h_1-2(1+ab)h_0\bigr)x.
\label{c12:eq:H-before-numerator}
\end{align}
Using \(h_0=1\) and \(h_1=2+ab-b^2\), substitution into \cref{c12:eq:H-before-numerator} gives
\[H(x)=\frac{1-b(a+b)x}{1-2(1+ab)x+(a+b)^2x^2}.\]
This proves \cref{thm:c12}.
\end{proof}

\begin{example}
For \((a,b)=(-2,1)\), the theorem gives $H(x)=\frac{1+x}{(1+x)^2}=\frac{1}{1+x}$.
For \((a,b)=(1,2)\), it gives $H(x)=\frac{1-6x}{1-6x+9x^2}=\frac{1-6x}{(1-3x)^2}$.
These are the special cases recorded in \cite{Barry2020}.
\end{example}

\section{The family $(1+ax)/(1-bx^2)$}\label{sec:c17}

All calculations below are performed over the polynomial ring $\Ring=\mathbb{Q}[a,b]$.
The resulting identities are polynomial identities and may be specialized to arbitrary values of \(a,b\) in every field of
characteristic zero.

Let
\[g_{a,b}(x)=\frac{1+ax}{1-bx^2},\qquad M(z)=\Ctrans(g_{a,b})(z)=\sum_{n\ge0}\mu_nz^n.
\]
By \cref{prop:C-closed-form},
\begin{equation}\label{c17:eq:moment-generating-function}
  M(z)
  =
  \frac{1-bz^2c(z)^4}
       {\sqrt{1-4z}\bigl(1+azc(z)^2\bigr)}.
\end{equation}
Let \(\Lfunc(X^n)=\mu_n\), and set $\nu_n=\Lfunc(q_n)$ ($n\ge0$).
For \(g_{a,b}(u)=(1+au)/(1-bu^2)\), \cref{prop:q-moment-general} gives
$\sum_{n\ge0}\nu_nu^n=\frac{1-bu^2}{1+au}$.
Introduce $\rho=-a$. Then
\[\frac{1-bu^2}{1+au}=(1-bu^2)\sum_{n\ge0}\rho^nu^n.
\]
Comparing coefficients gives
\begin{equation}\label{c17:eq:nu-explicit}
  \nu_0=1,
  \qquad
  \nu_1=\rho,
  \qquad
  \nu_n=(\rho^2-b)\rho^{n-2}
  \quad(n\ge2).
\end{equation}
In particular,
\begin{equation}\label{c17:eq:nu-eventual-recurrence}
  \nu_{n+1}=\rho\nu_n
  \qquad(n\ge2).
\end{equation}
It is useful to record the one-step defects $\varepsilon_n=\nu_n-\rho\nu_{n-1}$, ($n\ge1$).
\cref{c17:eq:nu-explicit,c17:eq:nu-eventual-recurrence} yield
\begin{equation}\label{c17:eq:epsilon-values}
  \varepsilon_1=0,
  \qquad
  \varepsilon_2=-b,
  \qquad
  \varepsilon_n=0
  \quad(n\ge3).
\end{equation}

\subsection{Pentadiagonalization of the Gram matrix} \label{c17:sec:pentadiagonalization}

Define a new monic polynomial basis by
\begin{equation}\label{c17:eq:p-definition}
\begin{aligned}
  p_0(X)&=q_0(X)=1,\\
  p_n(X)&=q_n(X)-\rho q_{n-1}(X)
          =q_n(X)+a q_{n-1}(X),
          \qquad n\ge1.
\end{aligned}
\end{equation}
Since \(q_n\) is monic of degree \(n\), each \(p_n\) is also monic of degree \(n\).
Set $\sigma=\rho(b-1)=a(1-b)$
and $\Gamma_{i,j}=\Lfunc(p_ip_j)$.

\begin{proposition}[Pentadiagonal Gram matrix]
\label{c17:prop:pentadiagonal-Gram}
For all \(i,j\ge0\),
\begin{equation}\label{c17:eq:Gamma-cases}
\Gamma_{i,j}=
\begin{cases}
  1,
  &i=j=0,\\[1mm]
  0,
  &\{i,j\}=\{0,1\},\\[1mm]
  -b,
  &\{i,j\}=\{0,2\},\\[1mm]
  2-b,
  &i=j=1,\\[1mm]
  2,
  &i=j\ge2,\\[1mm]
  \sigma,
  &|i-j|=1\text{ and }\min\{i,j\}\ge1,\\[1mm]
  -b,
  &|i-j|=2,\\[1mm]
  0,
  &\text{otherwise}.
\end{cases}
\end{equation}
Consequently, for \(n\ge2\),
\begin{equation}\label{c17:eq:Gamma-matrix}
\Gamma_n=(\Gamma_{i,j})_{0\le i,j\le n}
=
\begin{pmatrix}
1&0&-b&0&0&\cdots&0\\
0&2-b&\sigma&-b&0&\ddots&\vdots\\
-b&\sigma&2&\sigma&-b&\ddots&0\\
0&-b&\sigma&2&\sigma&\ddots&0\\
0&0&-b&\sigma&2&\ddots&-b\\
\vdots&\ddots&\ddots&\ddots&\ddots&\ddots&\sigma\\
0&\cdots&0&0&-b&\sigma&2
\end{pmatrix}.
\end{equation}
\end{proposition}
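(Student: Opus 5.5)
We compute $\Gamma_{i,j}=\Lfunc(p_ip_j)$ by expanding with the Chebyshev product rules of \cref{lem:q-basic-properties} and reducing to the values $\nu_n$. The defects $\varepsilon_n=\nu_n-\rho\nu_{n-1}$ from \cref{c17:eq:epsilon-values} do the bookkeeping. By symmetry of the form, it suffices to treat $i\ge j$.

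\emph{General entries.} For $i,j\ge 2$ with $i\ge j$, put $s=i+j$ and $k=i-j$. Expanding $(q_i-\rho q_{i-1})(q_j-\rho q_{j-1})$ as in Case 7 of \cref{c12:prop:tridiagonal} gives, for $k\ge 2$,
\[
\Gamma_{i,j}=(\varepsilon_s-\rho\varepsilon_{s-1})+(\varepsilon_k-\rho\varepsilon_{k+1}).
\]
Since $s\ge 6$, the first bracket vanishes. The second is $-b$ if $k=2$ and $0$ if $k\ge 3$.

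The cases $k=1$ and $k=0$ use the modified products $q_jq_{j-1}=q_{2j-1}+q_1$ and $q_j^2=q_{2j}+2q_0$.
\begin{itemize}
\item For $k=1$, the long-index part again vanishes. The short part is $\nu_1-\rho\nu_2-2\rho+\rho^2\nu_1$. With $\nu_1=\rho$ and $\nu_2=\rho^2-b$, this equals $\rho b-\rho=\rho(b-1)=\sigma$.
\item For $k=0$, the short part is $2-2\rho\nu_1+2\rho^2=2$.
\end{itemize}
This gives the bulk pattern $2,\sigma,-b$.

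\emph{Boundary entries.} The first two rows and columns are computed individually.
\begin{itemize}
\item $\Gamma_{0,0}=1$.
\item $\Gamma_{0,j}=\varepsilon_j$ for $j\ge1$, giving $0$, $-b$, and then $0$.
\item $\Gamma_{1,1}=\nu_2+2-2\rho\nu_1+\rho^2=2-b$.
\item $\Gamma_{1,2}$ expands to $\nu_3+\nu_1-\rho(\nu_2+2)-\rho\nu_2+\rho^2\nu_1$. Substituting the values of $\nu_n$, this should equal $\sigma$.
\item $\Gamma_{1,j}$ for $j\ge3$ equals $(\varepsilon_{j+1}-\rho\varepsilon_j)+\varepsilon_{j-1}$. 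This is $-b$ for $j=3$ and $0$ for $j\ge4$.
\end{itemize}

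The main obstacle is keeping track of the low indices. The defect $\varepsilon_2=-b$ enters exactly when some index $s$, $s-1$, $k$, $k\pm1$ equals $2$. The exceptional product $q_n^2$, which carries the extra term $2q_0$, must be handled whenever $i=j$ or $i-1=j-1$. We will tabulate each case separately to confirm that these contributions produce exactly the claimed boundary values and do not leak elsewhere.
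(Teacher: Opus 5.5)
Your proposal is correct and follows the paper's proof essentially case for case. Like you, the paper handles row $0$ via $\Gamma_{0,j}=\varepsilon_j$ and computes $\Gamma_{1,1}$, $\Gamma_{1,2}$, $\Gamma_{1,j}$ ($j\ge3$) directly; it then splits the bulk $i,j\ge2$ into $i-j=0$, $i-j=1$ and $i-j\ge2$, the last reducing to $(\varepsilon_s-\rho\varepsilon_{s-1})+(\varepsilon_k-\rho\varepsilon_{k+1})$ exactly as you wrote. Two small points: the $\Gamma_{1,2}$ value you left as ``should'' does simplify to $\rho b-\rho=\sigma$. Also, the squared products carrying the extra $2q_0$ arise whenever $|i-j|\le1$, not only when $i=j$; your $k=1$ computation already accounts for this through its $-2\rho$ term.
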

\begin{proof}
The bilinear form is symmetric, because \(\Lfunc(fg)=\Lfunc(gf)\).  It is therefore enough to compute \(\Gamma_{i,j}\) for \(0\le i\le j\). We consider the following cases.

\noindent\emph{Case 1: the entries in row and column zero.}
Since \(p_0=q_0=1\), $\Gamma_{0,0}=\Lfunc(q_0)=\nu_0=1$.
Moreover,
\begin{align*}
\Gamma_{0,1}=\Lfunc(q_1-\rho q_0)=\rho-\rho=0, \text{ and }\Gamma_{0,2}=\Lfunc(q_2-\rho q_1)=\nu_2-\rho\nu_1=-b.
\end{align*}
For \(j\ge3\), \cref{c17:eq:nu-eventual-recurrence} gives $\Gamma_{0,j}=\nu_j-\rho\nu_{j-1}=0$.

\noindent\emph{Case 2: the entry \(\Gamma_{1,1}\).}
By \cref{eq:q-product-diagonal}, $q_1^2=q_2+2q_0$. Therefore
\begin{align*}
\Gamma_{1,1}=\Lfunc\bigl((q_1-\rho q_0)^2\bigr)=\Lfunc(q_1^2)-2\rho\Lfunc(q_1)+\rho^2\Lfunc(q_0)
=(\nu_2+2)-2\rho\nu_1+\rho^2\nu_0=2-b.
\end{align*}

\noindent\emph{Case 3: the entry \(\Gamma_{1,2}\).}
The product identities give $q_1q_2=q_3+q_1$, $q_1^2=q_2+2q_0$.
Thus
\begin{align*}
\Gamma_{1,2}&=\Lfunc\bigl((q_1-\rho q_0)(q_2-\rho q_1)\bigr)
  =\Lfunc(q_1q_2)-\rho\Lfunc(q_1^2) -\rho\Lfunc(q_2) +\rho^2\Lfunc(q_1)\\
  &=(\nu_3+\nu_1)-\rho(\nu_2+2)-\rho\nu_2+\rho^2\nu_1=\rho\nu_2+\rho-\rho\nu_2-2\rho-\rho\nu_2+\rho^3\\
  &=\rho(b-1)=\sigma.
\end{align*}

\noindent\emph{Case 4: the entry \(\Gamma_{1,3}\).}
We have $q_1q_3=q_4+q_2$ and $q_1q_2=q_3+q_1$.
Consequently,
\begin{align*}
\Gamma_{1,3}&=\Lfunc\bigl((q_1-\rho q_0)(q_3-\rho q_2)\bigr)=\nu_4+\nu_2 -\rho(\nu_3+\nu_1)-\rho\nu_3+\rho^2\nu_2\\
  &=(\nu_4-\rho\nu_3) +(\nu_2-\rho\nu_1) -\rho(\nu_3-\rho\nu_2)=-b.
\end{align*}

\noindent\emph{Case 5: the entries \(\Gamma_{1,j}\) for \(j\ge4\).}
Using $q_1q_j=q_{j+1}+q_{j-1}$ and $q_1q_{j-1}=q_j+q_{j-2}$,
we obtain
\begin{align*}
\Gamma_{1,j} &=\nu_{j+1}+\nu_{j-1}-\rho(\nu_j+\nu_{j-2})-\rho\nu_j+\rho^2\nu_{j-1}\\
&=(\nu_{j+1}-\rho\nu_j) -\rho(\nu_j-\rho\nu_{j-1}) +(\nu_{j-1}-\rho\nu_{j-2}).
\end{align*}
For \(j\ge4\), all three parentheses vanish by \cref{c17:eq:nu-eventual-recurrence}.  Hence $\Gamma_{1,j}=0$ ($j\ge4$).

\noindent\emph{Case 6: diagonal entries \(\Gamma_{j,j}\) for \(j\ge2\).}
By \cref{eq:q-product-diagonal,eq:q-product-off-diagonal},
\[
  q_j^2=q_{2j}+2q_0,
  \qquad
  q_jq_{j-1}=q_{2j-1}+q_1,
  \qquad
  q_{j-1}^2=q_{2j-2}+2q_0.
\]
Therefore
\begin{align*}
\Gamma_{j,j}&=\Lfunc\bigl((q_j-\rho q_{j-1})^2\bigr)
=(\nu_{2j}+2) -2\rho(\nu_{2j-1}+\nu_1) +\rho^2(\nu_{2j-2}+2)\\
&=\bigl(\nu_{2j}-2\rho\nu_{2j-1} +\rho^2\nu_{2j-2}\bigr)+2-2\rho\nu_1+2\rho^2
\\ &= (\nu_{2j}-\rho\nu_{2j-1})-\rho(\nu_{2j-1}-\rho\nu_{2j-2})+2-2\rho^2+2\rho^2
\\ &= 2.
\end{align*}

\noindent\emph{Case 7: adjacent entries \(\Gamma_{j+1,j}\) for \(j\ge2\).}
The product identities give
\[q_{j+1}q_j=q_{2j+1}+q_1,\quad q_{j+1}q_{j-1}=q_{2j}+q_2,\quad q_j^2=q_{2j}+2q_0,\quad q_jq_{j-1}=q_{2j-1}+q_1.
\]
Hence
\begin{align*}
 \Gamma_{j+1,j}&=\Lfunc\bigl((q_{j+1}-\rho q_j)(q_j-\rho q_{j-1})\bigr)\\
  &=(\nu_{2j+1}+\nu_1) -\rho(\nu_{2j}+\nu_2) -\rho(\nu_{2j}+2) +\rho^2(\nu_{2j-1}+\nu_1)\\
  &=\bigl(\nu_{2j+1}-2\rho\nu_{2j} +\rho^2\nu_{2j-1}\bigr) +\nu_1-\rho\nu_2-2\rho+\rho^2\nu_1.
\end{align*}
The first parenthesis is zero.  Using
\(\nu_1=\rho\) and \(\nu_2=\rho^2-b\), we obtain $\Gamma_{j+1,j}=\sigma$  ($j\ge2$).

\noindent\emph{Case 8: entries at distance at least two, with both indices at least two.}
Let \(i\ge j+2\) and \(j\ge2\).  Put $m=i+j$, $k=i-j\ge2$.
The product identities give
\begin{align*}
q_iq_j=q_m+q_k,\quad q_iq_{j-1}=q_{m-1}+q_{k+1}, \quad q_{i-1}q_j=q_{m-1}+q_{k-1},\quad q_{i-1}q_{j-1}=q_{m-2}+q_k.
\end{align*}
Therefore
\begin{align*}
\Gamma_{i,j}&=(\nu_m+\nu_k)-\rho(\nu_{m-1}+\nu_{k+1})-\rho(\nu_{m-1}+\nu_{k-1})+\rho^2(\nu_{m-2}+\nu_k)
  \\&=\bigl(\nu_m-2\rho\nu_{m-1}+\rho^2\nu_{m-2}\bigr)+\bigl((1+\rho^2)\nu_k-\rho\nu_{k+1}-\rho\nu_{k-1}\bigr)
  \\&= (\nu_m-\rho\nu_{m-1})-\rho(\nu_{m-1}-\rho\nu_{m-2})+(\nu_k-\rho\nu_{k-1})-\rho(\nu_{k+1}-\rho\nu_k)
  \\&= 0+\varepsilon_k-\rho\varepsilon_{k+1}.
\end{align*}
If \(k=2\), then \cref{c17:eq:epsilon-values} gives $B_2=\varepsilon_2-\rho\varepsilon_3=-b$.
If \(k\ge3\), then both defects vanish and \(B_k=0\).  Thus
\[\Gamma_{i,j}=
\begin{cases}
-b,&i-j=2,\\
0,&i-j\ge3.
  \end{cases}
\]

Combining Steps 1--8 and using symmetry proves
\cref{c17:eq:Gamma-cases}, hence also the matrix form
\cref{c17:eq:Gamma-matrix}.
\end{proof}

Since the basis \((p_n)\) is monic, \cref{lem:monic-change-basis} gives
\begin{equation}\label{c17:eq:hankel-equals-Gamma}
  h_n=\det\Gamma_n
  \qquad(n\ge0).
\end{equation}

\subsection{A recurrence for a pentadiagonal matrix}\label{c17:sec:continuant}

We now derive the determinant recurrence needed for \cref{c17:eq:Gamma-matrix}.  The argument is stated over an arbitrary commutative ring.

\begin{lemma}\label{c17:lem:pentadiagonal-continuant}
Let \((A_m)_{m\ge0}\) be the leading principal sections of an infinite
symmetric matrix, where \(A_0\) is the empty matrix and
\(D_m=\det A_m\), with \(D_0=1\).  Suppose that, for every \(m\ge3\),
the last row of \(A_m\) has the form
\[(0,\ldots,0,e,c,d),
\]
so that the matrix has bandwidth two and the newly appended diagonal,
first off-diagonal, and second off-diagonal entries are the constants
\(d,c,e\), respectively.

Then, for every \(m\ge5\),
\begin{equation}\label{c17:eq:fifth-order-continuant}
D_m=(d-e)D_{m-1}+(de-c^2)D_{m-2}+e(c^2-de)D_{m-3}+e^3(e-d)D_{m-4}+e^5D_{m-5}.
\end{equation}
Define
\begin{align}
A=d-2e,\quad B=c^2-2de+2e^2,\quad C=e^2(d-2e),\quad E=e^4 \label{c17:eq:ABCE-cont}
\end{align}
and
\begin{equation}\label{c17:eq:R-definition}
  R_m=D_m-AD_{m-1}+BD_{m-2}-CD_{m-3}+ED_{m-4}.
\end{equation}
Then
\begin{equation}\label{c17:eq:R-geometric}
  R_m=eR_{m-1}
  \qquad(m\ge5).
\end{equation}
In particular, if \(R_4=0\), then
\begin{equation}\label{c17:eq:fourth-order-continuant}
  D_m=AD_{m-1}-BD_{m-2}+CD_{m-3}-ED_{m-4}
  \qquad(m\ge4).
\end{equation}
\end{lemma}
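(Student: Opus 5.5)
The plan has two parts. First, establish the fifth-order recurrence \cref{c17:eq:fifth-order-continuant} by a transfer-matrix argument. Second, derive \cref{c17:eq:R-geometric} and \cref{c17:eq:fourth-order-continuant} from it by a polynomial factorization and an induction. To keep everything valid over an arbitrary commutative ring, I would treat \(c,d,e\) and the irregular leading entries as indeterminates and work over the corresponding polynomial ring over \(\mathbb{Z}\). Any identity proved there specializes to the given ring.

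\emph{Step 1: auxiliary minors and a transfer matrix.} Alongside \(D_m\), introduce the minors of \(A_m\) obtained by deleting the last row and one of the last two columns, or the last column and one of the last two rows. By symmetry these come in equal pairs. Concretely, let \(P_m\) be the determinant of \(A_m\) with its last column replaced by the unit vector in the second-to-last position. Let \(Q_m\) be the analogous determinant where the replaced column instead carries the two-step shift.

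Expand \(D_m\) along its last row \((0,\ldots,0,e,c,d)\). This gives
\[D_m = dD_{m-1} - c\,(\text{a minor of type }P_{m-1}) + e\,(\text{a minor of type }Q_{m-1}).\]
Expanding those minors once more along their last rows or columns closes the system. Because the band is constant from index \(3\) onward, the result is a linear recursion \(v_m=Tv_{m-1}\) for a state vector such as
\[v_m=(D_m,D_{m-1},D_{m-2},P_m,P_{m-1})^{\trans}\]
(or a similar five-component choice), where \(T\) is a fixed \(5\times5\) matrix with entries polynomial in \(c,d,e\). This recursion holds for all \(m\) beyond a small threshold. Alternatively, one could use a Laplace expansion along the last two rows, which organizes the same bookkeeping in a single step.

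\emph{Step 2: Cayley--Hamilton.} Apply Cayley--Hamilton to \(T\). Reading off the first coordinate then yields
\[D_m=\sum_{k=1}^{5}t_kD_{m-k},\]
where \(\chi_T(x)=x^5-t_1x^4-\cdots-t_5\) is the characteristic polynomial of \(T\). Once \(\chi_T\) is computed, \cref{c17:eq:fifth-order-continuant} follows for \(m\ge5\). The threshold must be checked against the range where the transfer relation is valid; this is why the statement requires \(m\ge5\) rather than something smaller. I expect this step to be the main obstacle. Choosing the auxiliary minors so that the system closes in exactly five dimensions takes care, and so does computing \(\chi_T\) without error. If the first choice of minors gives a larger system, I would rely on the symmetry \(P=P^{\trans}\) to identify equal minors and reduce the dimension.

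\emph{Step 3: factorization.} This part is a direct verification. A polynomial multiplication shows that
\[x^5-(d-e)x^4-(de-c^2)x^3-e(c^2-de)x^2-e^3(e-d)x-e^5=(x-e)\bigl(x^4-Ax^3+Bx^2-Cx+E\bigr)\]
with \(A,B,C,E\) as in \cref{c17:eq:ABCE-cont}. For example, the \(x^4\) coefficient checks as \(A+e=d-e\), and the \(x^3\) coefficient as \(B+eA=c^2-de\). The same matching holds for the lower coefficients and the constant term \(eE=e^5\). This factorization says exactly that \(R_m-eR_{m-1}\) equals \(D_m\) minus the right-hand side of \cref{c17:eq:fifth-order-continuant}. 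Hence \(R_m=eR_{m-1}\) for \(m\ge5\), which is \cref{c17:eq:R-geometric}.

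\emph{Step 4: the fourth-order recurrence.} If \(R_4=0\), induction on \(m\) gives \(R_m=e^{m-4}R_4=0\) for all \(m\ge4\). By the definition \cref{c17:eq:R-definition}, this is precisely \cref{c17:eq:fourth-order-continuant}.
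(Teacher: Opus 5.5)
\textbf{Verdict: genuine gap.} Your Steps 3 and 4 match the paper's closing argument: the factorization via $A+e=d-e$, $B+eA=c^2-de$, and so on, followed by induction from $R_4=0$. The gap is in Steps 1--2, which contain the entire substance of the lemma. The fifth-order recurrence with its specific coefficients is never derived, and your one concrete state vector does not close.

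Write $P_m,Q_m$ for the cofactors of the entries $(m-1,m)$ and $(m-2,m)$, so that $D_m=dD_{m-1}+cP_m+eQ_m$. The identities you need, all of which the paper proves, are:
\[
P_m=-eP_{m-1}-cD_{m-2},\qquad Q_m=-cP_{m-1}-eW_{m-1},\qquad W_{m-1}=dD_{m-3}-e^2D_{m-4}.
\]
Here $W_{m-1}$ is the principal minor of $A_{m-1}$ with index $m-2$ deleted. These give
\[
D_{m+1}=dD_m-c^2D_{m-1}-2ceP_m-de^2D_{m-2}+e^4D_{m-3}.
\]
So the state $(D_m,D_{m-1},D_{m-2},P_m,P_{m-1})$ cannot produce $D_{m+1}$: $D_{m-3}$ is not available from those components. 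Moreover, the first identity is a linear dependence among $P_m,P_{m-1},D_{m-2}$, so that choice effectively carries only four quantities. Your fallback of merging equal minors by symmetry therefore goes the wrong way; you need one more independent quantity, not fewer. A state that works is $v_k=(D_k,D_{k-1},D_{k-2},D_{k-3},P_k)$, with $P_{k+1}=-cD_{k-1}-eP_k$. Its characteristic polynomial is $(x+e)(x^4-dx^3+c^2x^2+de^2x-e^4)-2c^2ex^2$, which does expand to the quintic you factor in Step 3.

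Second, the range $m\ge5$ does not follow from ``reading off the first coordinate.'' The transfer $v_{k+1}=Tv_k$ holds only for $k\ge3$, since it uses $D_{k-3}$ and the constancy of rows $\ge3$. The first coordinate of $\chi_T(T)v_k=0$ therefore gives the recurrence only for $m\ge8$. To reach $m=k+2\ge5$ you must read the fourth coordinate, the one carrying $D_{k-3}$.

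This matters: the final clause uses $R_m=eR_{m-1}$ down to $m=5$ so that checking $R_4=0$ alone suffices. If the recurrence started at $m=8$, you would also need $R_5=R_6=R_7=0$.

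The paper avoids both problems without any transfer matrix or Cayley--Hamilton step. Substituting the three cofactor identities gives, for $m\ge4$,
\[
D_m=dD_{m-1}-c^2D_{m-2}-2ceU_{m-1}-de^2D_{m-3}+e^4D_{m-4}.
\]
It then eliminates $U_{m-1}$ using this same identity at $m-1$ together with $U_{m-1}=-eU_{m-2}-cD_{m-3}$. That yields the fifth-order relation directly for $m\ge5$.
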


\begin{proof}
We use one-based indices inside this proof.  Let \(U_m\) be the cofactor
of the entry in position \((m,m-1)\) of \(A_m\), and let \(V_m\) be the
cofactor of the entry in position \((m,m-2)\).  Expansion along the last
row gives
\begin{equation}\label{c17:eq:D-expand-last-row}
  D_m=dD_{m-1}+cU_m+eV_m.
\end{equation}
We next compute \(U_m\), keeping every cofactor sign explicit.
Let $B_m=A_m^{(m\mid m-1)}$ denote the matrix obtained by deleting row \(m\) and column \(m-1\).
Since $(-1)^{m+(m-1)}=-1$, we have
\begin{equation}\label{c17:eq:Um-as-Bm}
  U_m=-\det B_m.
\end{equation}
The last column of \(B_m\), inherited from column \(m\) of \(A_m\), has the entry \(e\) in row position \(m-2\), the entry \(c\) in row position \(m-1\), and zeros elsewhere.  Laplace expansion along this last column gives
\begin{equation}\label{c17:eq:det-Bm-expansion}
\det B_m=(-1)^{(m-2)+(m-1)}eK_m+(-1)^{(m-1)+(m-1)}cD_{m-2}=-eK_m+cD_{m-2},
\end{equation}
where \(K_m\) is the determinant obtained from \(A_{m-1}\) by deleting
row \(m-2\) and column \(m-1\).  Clearly, we have $K_m=-U_{m-1}$.
Substituting this into \cref{c17:eq:det-Bm-expansion} and then using \cref{c17:eq:Um-as-Bm} yields
\begin{equation}\label{c17:eq:U-recurrence}
U_m=-eU_{m-1}-cD_{m-2}.
\end{equation}

Now compute \(V_m\).  Let $C_m=A_m^{(m\mid m-2)}$.
Since \((-1)^{m+(m-2)}=1\), we have \(V_m=\det C_m\).  The last column of \(C_m\) again comes from column \(m\) of \(A_m\) and has the two nonzero entries \(e\) and \(c\).  Expanding along that column gives
$$V_m=-eW_{m-1}+cL_m,$$
where \(W_{m-1}\) is the determinant obtained from \(A_{m-1}\) by
deleting its penultimate row and penultimate column, and \(L_m\) is the
minor determinant obtained from \(A_{m-1}\) by deleting row \(m-1\)
and column \(m-2\).  Since $U_{m-1}=(-1)^{(m-1)+(m-2)}L_m=-L_m$, we have \(L_m=-U_{m-1}\).  Therefore
\begin{equation}\label{c17:eq:V-recurrence}
  V_m=-cU_{m-1}-eW_{m-1}.
\end{equation}

After the penultimate row and column have been deleted from
\(A_{m-1}\), its final index is coupled to the preceding leading block
only through the second off-diagonal entry \(e\).  Therefore the
remaining matrix has the bordered form
\[
  \begin{pmatrix}
    A_{m-3}&e\mathbf e_{m-3}\\
    e\mathbf e_{m-3}^{\mathsf T}&d
  \end{pmatrix},
\]
where \(\mathbf e_{m-3}\) is the final coordinate vector.  Expanding
along the last row, the diagonal entry contributes \(dD_{m-3}\).  The
only other nonzero entry in that row is \(e\), whose cofactor has sign
\(-1\); the remaining minor has a final column containing only the
entry \(e\), and its expansion contributes \(eD_{m-4}\).  Hence the
off-diagonal contribution is \(-e\cdot eD_{m-4}=-e^2D_{m-4}\), and
therefore
\begin{equation}\label{c17:eq:W-formula}
  W_{m-1}=dD_{m-3}-e^2D_{m-4}.
\end{equation}
Substituting \cref{c17:eq:U-recurrence,c17:eq:V-recurrence,c17:eq:W-formula} into
\cref{c17:eq:D-expand-last-row} gives
\begin{align}
  D_m
  &=dD_{m-1}
    +c(-eU_{m-1}-cD_{m-2})
    +e(-cU_{m-1}-eW_{m-1})\nonumber\\
  &=dD_{m-1}-c^2D_{m-2}-2ceU_{m-1}
    -de^2D_{m-3}+e^4D_{m-4}.
\label{c17:eq:D-with-U}
\end{align}
Apply the same identity with \(m\) replaced by \(m-1\):
$$D_{m-1}=dD_{m-2}-c^2D_{m-3}-2ceU_{m-2}-de^2D_{m-4}+e^4D_{m-5}.$$
Rearranging, without dividing by any parameter, gives
\begin{equation}\label{c17:eq:2ceU}
  2ceU_{m-2}
  =dD_{m-2}-c^2D_{m-3}-de^2D_{m-4}
   +e^4D_{m-5}-D_{m-1}.
\end{equation}
On the other hand, \cref{c17:eq:U-recurrence} implies
\begin{align}
-2ceU_{m-1}=2ce^2U_{m-2}+2c^2eD_{m-3}.
\label{c17:eq:minus-2ceU}
\end{align}
Multiply \cref{c17:eq:2ceU} by \(e\), substitute the result into
\cref{c17:eq:minus-2ceU}, and then substitute into
\cref{c17:eq:D-with-U}.  This gives
\begin{align*}
D_m&=dD_{m-1}-c^2D_{m-2}-de^2D_{m-3}+e^4D_{m-4}
\\&\quad+e\bigl(dD_{m-2}-c^2D_{m-3}-de^2D_{m-4}+e^4D_{m-5}-D_{m-1}\bigr)+2c^2eD_{m-3}
\\&= (d-e)D_{m-1}+(de-c^2)D_{m-2}+e(c^2-de)D_{m-3}+e^3(e-d)D_{m-4}+e^5D_{m-5}.
\end{align*}
which is \cref{c17:eq:fifth-order-continuant}.

It remains to verify \cref{c17:eq:R-geometric}.  From
\cref{c17:eq:ABCE-cont},
\begin{align*}
A+e=d-e,\quad -(B+eA)&=de-c^2,\quad C+eB=e(c^2-de),\quad -(E+eC)=e^3(e-d),
\end{align*}
and $eE=e^5$.
Thus the identity $R_m=eR_{m-1}$ expands exactly to \cref{c17:eq:fifth-order-continuant}.  If \(R_4=0\),
then \cref{c17:eq:R-geometric} implies inductively that \(R_m=0\) for every \(m\ge4\), which is precisely \cref{c17:eq:fourth-order-continuant}.
\end{proof}

\subsection{The Hankel generating function}\label{c17:sec:application-continuant}

Let \(A_m\) be the \(m\times m\) leading principal section of the
infinite matrix in \cref{c17:eq:Gamma-matrix}, with \(A_0\) empty, and set $D_m=\det A_m$, $D_0=1$.
By \cref{c17:eq:hankel-equals-Gamma}, we have $h_n=D_{n+1}$.
For every \(m\ge3\), the final diagonal, first off-diagonal, and second
off-diagonal entries are $d=2$, $c=\sigma$, $e=-b$.
Therefore the constants in \cref{c17:eq:ABCE-cont} are
\begin{align}
& A=d-2e=2(1+b),\quad  B=c^2-2de+2e^2=\sigma^2+4b+2b^2,\nonumber
\\ &C=e^2(d-2e)=2b^2(1+b), \quad E=e^4=b^4.\label{c17:eq:ABCE-special}
\end{align}

We now calculate \(D_0,D_1,D_2,D_3,D_4\) explicitly.
The first two nonempty sections give
\begin{equation}\label{c17:eq:D0-D2}
  D_0=1,\qquad D_1=1,\qquad
  D_2=\det\begin{pmatrix}1&0\\0&2-b\end{pmatrix}=2-b.
\end{equation}
For \(D_3\), we have
\begin{align}
D_3=\det
\begin{pmatrix}
1&0&-b\\
0&2-b&\sigma\\
-b&\sigma&2
\end{pmatrix}
=4-2b-\sigma^2-2b^2+b^3.
\label{c17:eq:D3}
\end{align}
For \(D_4\), we obtain
\begin{align}
D_4=\det
\begin{pmatrix}
1&0&-b&0\\
0&2-b&\sigma&-b\\
-b&\sigma&2&\sigma\\
0&-b&\sigma&2
\end{pmatrix}
=8-4b-6b^2+2b^3+b^4-(b+4)\sigma^2.
\label{c17:eq:D4}
\end{align}

To use \cref{c17:lem:pentadiagonal-continuant}, we must verify \(R_4=0\).
By \cref{c17:eq:R-definition,c17:eq:ABCE-special},
$$R_4=D_4-2(1+b)D_3+(\sigma^2+4b+2b^2)D_2-2b^2(1+b)D_1+b^4D_0.$$
Using \cref{c17:eq:D0-D2,c17:eq:D3,c17:eq:D4}, we obtain $R_4=0$.

It follows from \cref{c17:lem:pentadiagonal-continuant} that, for every \(m\ge4\),
$$D_m=2(1+b)D_{m-1}-(\sigma^2+4b+2b^2)D_{m-2}+2b^2(1+b)D_{m-3}-b^4D_{m-4}.$$
Using \(h_n=D_{n+1}\), we obtain, for every \(n\ge4\),
\begin{equation}\label{c17:eq:h-fourth-recurrence}
h_n=2(1+b)h_{n-1}-(\sigma^2+4b+2b^2)h_{n-2}+2b^2(1+b)h_{n-3}-b^4h_{n-4}.
\end{equation}
The initial values in \cref{c17:eq:h0123-intro}
follow immediately from \(h_n=D_{n+1}\) and
\cref{c17:eq:D0-D2,c17:eq:D3,c17:eq:D4}.

\begin{proof}[Proof of \cref{thm:c17}]
Let $H(x)=\sum_{n\ge0}h_nx^n$. Set
$$Q(x)=1-2(1+b)x+(\sigma^2+4b+2b^2)x^2-2b^2(1+b)x^3+b^4x^4.$$
By \cref{c17:eq:h-fourth-recurrence}, every coefficient of \(x^n\) in
\(Q(x)H(x)\) is zero for \(n\ge4\).  We now calculate the four
remaining coefficients one by one.

The constant coefficient is $h_0=1$.
The coefficient of \(x\) is $h_1-2(1+b)h_0=(2-b)-2(1+b)=-3b$.
The coefficient of \(x^2\) is
\begin{align*}
h_2-2(1+b)h_1+(\sigma^2+4b+2b^2)h_0=b^2(2+b).
\end{align*}
The coefficient of \(x^3\) is
\begin{align*}
h_3-2(1+b)h_2+(\sigma^2+4b+2b^2)h_1-2b^2(1+b)h_0=-b^4
\end{align*}
We have therefore proved
$$Q(x)H(x)=1-3bx+b^2(2+b)x^2-b^4x^3.$$
Since \(Q(0)=1\), the series \(Q(x)\) is invertible in
\(\Ring[[x]]\), and
\begin{equation}\label{c17:eq:H-s-form}
H(x) =\frac{1-3bx+b^2(2+b)x^2-b^4x^3} {1-2(1+b)x+(\sigma^2+4b+2b^2)x^2-2b^2(1+b)x^3+b^4x^4}.
\end{equation}
By $\sigma^2=a^2(1-b)^2=a^2-2a^2b+a^2b^2$, substitution into \cref{c17:eq:H-s-form} gives exactly
\cref{c17:eq:main-H}, completing the proof of \cref{thm:c17}.
\end{proof}

\begin{example}
When \(b=0\), the formula reduces to $H(x)=\frac1{1-2x+a^2x^2}$.
For example, \(a=2\) gives $H(x)=\frac1{1-2x+4x^2}$, in agreement with \cite[Example 18]{Barry2020}.
\end{example}

\section{The family $(1+ax)/(1-x^3)$}\label{sec:c24}

Throughout this section, $R=\mathbb{Q}[a]$.
As printed, the sign of the linear coefficient in the numerator is incorrect in \cite[Conjecture~24]{Barry2020}.
For the family \cref{c24:eq:ga-intro}, define
$M_a(z):=\Ctrans(g_a)(z)=\sum_{n\ge0}\mu_nz^n$.
By \cref{prop:C-closed-form},
$$M_a(z)=\frac{1-z^3c(z)^6}{\sqrt{1-4z}\bigl(1+azc(z)^2\bigr)}.$$
For the remainder of this section, let $\Lfunc:R[X]\longrightarrow R$, $\Lfunc(X^n)=\mu_n$,
be the functional associated with \(M_a(z)\).

\subsection{An adapted monic basis and its Gram matrix}\label{c24:sec:adapted-basis}

Define $\phi(u)=\frac{u}{(1+u)^2}$, $A_a(u)=\frac{1+au}{1+u}$.
We introduce polynomials \(r_n(X)\in\Ring[X]\) by the generating function
\begin{equation}\label{c24:eq:r-basis-gf}
\mathcal{R}_a(X,u):=\sum_{n\ge0}r_n(X)u^n=\frac{(1+au)(1+u)}{(1+u)^2-Xu}=\frac{A_a(u)}{1-X\phi(u)}.
\end{equation}

\begin{lemma}\label{c24:lem:r-monic}
For every \(n\ge0\), the polynomial \(r_n(X)\) is monic of degree \(n\).
\end{lemma}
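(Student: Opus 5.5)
The cleanest route is to express $r_n$ in the already-understood basis $q_n$. By \cref{eq:q-generating-function}, $\sum_{n\ge0}q_n(X)u^n=(1-u^2)/((1+u)^2-Xu)$. Comparing with \cref{c24:eq:r-basis-gf}, we get
\[
\mathcal{R}_a(X,u)=\frac{(1+au)(1+u)}{1-u^2}\sum_{n\ge0}q_n(X)u^n=\frac{1+au}{1-u}\sum_{n\ge0}q_n(X)u^n .
\]
This is a formal power series identity in $u$ over $\Ring[X]$, and $1-u$ is invertible in $\Ring[[u]]$. Expanding $(1+au)/(1-u)=1+(1+a)\sum_{k\ge1}u^k$ and taking the coefficient of $u^n$ gives
\[
r_n(X)=q_n(X)+(1+a)\sum_{k=0}^{n-1}q_k(X).
\]

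Monicity now follows from \cref{lem:q-basic-properties}(1). The polynomial $q_n$ is monic of degree $n$, and each $q_k$ with $k<n$ has degree $k<n$. So the correction term $(1+a)\sum_{k<n}q_k$ has degree less than $n$, and $r_n$ is monic of degree $n$ with coefficients in $\Ring=\mathbb{Q}[a]$. The case $n=0$ gives $r_0=q_0=1$, which is consistent.

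As a sanity check, one can also argue directly from $A_a(u)/(1-X\phi(u))=\sum_{k\ge0}X^kA_a(u)\phi(u)^k$. Since $\phi(u)^k=u^k(1+\cdots)$ and $A_a(0)=1$, the coefficient of $u^n$ involves only $X^k$ with $k\le n$, and the coefficient of $X^n$ is $[u^n]A_a(u)\phi(u)^n=1$. I would present the first argument and mention this second one only as an alternative.

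The only step needing care is the algebraic rearrangement $(1+au)(1+u)/(1-u^2)=(1+au)/(1-u)$ as formal series, which is routine. I do not expect any genuine obstacle here.
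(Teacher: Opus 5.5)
Your proof is correct, but your main argument differs from the paper's. The paper's proof is essentially your ``sanity check.'' It expands $\mathcal{R}_a(X,u)=\sum_{k\ge0}X^ku^k(1+au)(1+u)^{-(2k+1)}$. It then observes that $[u^n]$ only involves $X^k$ with $k\le n$, and that the $X^n$ coefficient is $[u^0](1+au)(1+u)^{-(2n+1)}=1$. That argument is self-contained and uses nothing about the Chebyshev basis.

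You instead divide \cref{eq:q-generating-function} into \cref{c24:eq:r-basis-gf}, using $(1+u)/(1-u^2)=1/(1-u)$ in $\Ring[X][[u]]$. This gives the explicit unitriangular change of basis $r_n=q_n+(1+a)\sum_{k<n}q_k$, and monicity then comes from \cref{lem:q-basic-properties}. Your route depends on part (1) of that lemma, the leading coefficient of $T_n$, which the paper's route avoids. In exchange you get more information: the adapted basis expressed explicitly in the shifted Chebyshev basis. This links the section back to the $q_n$-framework of the other three families. For instance, $\Lfunc(r_1)=\nu_1+(1+a)\nu_0=1$ follows at once from \cref{prop:q-moment-general}, and $\mathsf{G}_N$ is a unitriangular congruence of $(\Lfunc(q_iq_j))$.
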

\begin{proof}
Using the geometric-series expansion in the variable \(X\phi(u)\),
we have
\begin{align*}
\mathcal{R}_a(X,u)&=\frac{1+au}{1+u}\sum_{k\ge0}X^k\left(\frac{u}{(1+u)^2}\right)^k=\sum_{k\ge0}X^ku^k\frac{1+au}{(1+u)^{2k+1}}.
\end{align*}
To obtain the coefficient of \(u^n\), only indices \(k\le n\) can
contribute.  The term with \(k=n\) contributes
\[
  X^n[u^0]\frac{1+au}{(1+u)^{2n+1}}=X^n,
\]
because the last factor has constant term \(1\).  Every term with
\(k<n\) has degree at most \(k<n\) in \(X\).  Thus the coefficient of
\(u^n\) is a polynomial of degree exactly \(n\), with leading
coefficient \(1\).
\end{proof}

\begin{lemma}\label{c24:lem:resolvent}
Let \(M(z)=\sum_{n\ge0}\mu_nz^n\), and let \(\Lfunc\) be \cref{eq:moment-functional}.  For indeterminates
\(s,t\) with zero constant terms, we have 
\begin{align*}
  \Lfunc\left(\frac{1}{(1-Xs)(1-Xt)}\right)=\frac{sM(s)-tM(t)}{s-t}.
\end{align*}
The quotient on the right is understood as the formal divided
difference
\begin{equation}\label{c24:eq:divided-difference-definition}
  \frac{sM(s)-tM(t)}{s-t}
  :=
  \sum_{k\ge0}\mu_k\sum_{j=0}^ks^{k-j}t^j.
\end{equation}
\end{lemma}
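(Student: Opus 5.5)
The plan is to expand both sides as formal power series in the two variables $s,t$ and compare coefficients. Since $s$ and $t$ have zero constant term, the products $(1-Xs)^{-1}=\sum_{i\ge0}X^is^i$ and $(1-Xt)^{-1}=\sum_{j\ge0}X^jt^j$ are well-defined formal series with coefficients in $\Ring[X]$. Their product is
\[
\frac{1}{(1-Xs)(1-Xt)}=\sum_{i,j\ge0}X^{i+j}s^it^j .
\]
We apply $\Lfunc$ coefficientwise in $s$ and $t$, exactly as in the proofs of \cref{prop:q-moment-general} and \cref{prop:q-orthogonality}. Each coefficient of $s^it^j$ is a single polynomial $X^{i+j}$, so linearity gives
\[
\Lfunc\left(\frac{1}{(1-Xs)(1-Xt)}\right)=\sum_{i,j\ge0}\mu_{i+j}s^it^j.
\]

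Next, we regroup this sum by the total degree $k=i+j$. Writing $i=k-j$ with $0\le j\le k$ gives $\sum_{k\ge0}\mu_k\sum_{j=0}^k s^{k-j}t^j$. This is precisely the defining expression \cref{c24:eq:divided-difference-definition}, so the identity holds by definition. The rearrangement is legitimate because, for each fixed bidegree $(i,j)$, only one term contributes.

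To justify the suggestive notation, we also check that $(s-t)\sum_{j=0}^k s^{k-j}t^j=s^{k+1}-t^{k+1}$ (a telescoping sum). Multiplying the right-hand side by $s-t$ therefore yields $\sum_k\mu_k(s^{k+1}-t^{k+1})=sM(s)-tM(t)$. So the divided difference is the unique series $F$ with $(s-t)F=sM(s)-tM(t)$, using that $s-t$ is not a zero divisor in $\Ring[[s,t]]$.

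No step here is a real obstacle. The only point needing care is formal: when $s,t$ are later specialized to series such as $\phi(u)$, or to series in separate variables, every composition must remain well defined. This is guaranteed by the hypothesis that $s$ and $t$ have zero constant term, which makes all the sums above finite in each degree.
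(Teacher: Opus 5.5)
Your proposal is correct and matches the paper's proof step for step: expand the two geometric series, apply $\Lfunc$ coefficientwise to get $\sum_{i,j\ge0}\mu_{i+j}s^it^j$, regroup by $k=i+j$, and use $(s-t)\sum_{j=0}^k s^{k-j}t^j=s^{k+1}-t^{k+1}$. Your extra remark, that $s-t$ is a non-zero-divisor and so the divided difference is the unique $F$ with $(s-t)F=sM(s)-tM(t)$, justifies the quotient notation a little more carefully than the paper does.
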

\begin{proof}
Expanding both geometric series gives
\begin{align*}
\Lfunc\left( \frac{1}{(1-Xs)(1-Xt)}\right)
 =\Lfunc\left(\sum_{m,n\ge0}X^{m+n}s^mt^n\right)
 =\sum_{m,n\ge0}\mu_{m+n}s^mt^n.
\end{align*}
Reindexing by \(k=m+n\),
\[\sum_{m,n\ge0}\mu_{m+n}s^mt^n=\sum_{k\ge0}\mu_k\sum_{j=0}^ks^{k-j}t^j.
\]
For every \(k\ge0\),
\[
  \sum_{j=0}^ks^{k-j}t^j
  =\frac{s^{k+1}-t^{k+1}}{s-t}.
\]
Hence the last double sum is exactly the divided difference in
\cref{c24:eq:divided-difference-definition}.
\end{proof}

\begin{proposition}\label{c24:prop:gram-kernel}
Let $K_a(u,v)=\sum_{i,j\ge0}\Lfunc(r_i r_j)u^iv^j$.
Then
\begin{equation}\label{c24:eq:gram-kernel-final}
K_a(u,v)=\frac{\Pi_a(u,v)}{1-uv},
\end{equation}
where
\begin{equation}\label{c24:eq:Pi-def}
\Pi_a(u,v)=1+u+v+u^2+v^2+2uv+(a+1)(u^2v+uv^2)+au^2v^2.
\end{equation}
\end{proposition}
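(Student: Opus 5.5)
The plan is to compute the bivariate generating function $K_a(u,v)$ directly from the resolvent identity of \cref{c24:lem:resolvent}. By \cref{c24:eq:r-basis-gf}, $\sum_{i}r_i(X)u^i=A_a(u)/(1-X\phi(u))$. Multiplying the two generating functions in $u$ and $v$ and applying $\Lfunc$ coefficientwise gives
\[
K_a(u,v)=A_a(u)A_a(v)\,\Lfunc\!\left(\frac{1}{(1-X\phi(u))(1-X\phi(v))}\right).
\]
Here $\phi(u),\phi(v)$ have zero constant term, so everything is a well-defined formal series in $u,v$. With $s=\phi(u)$ and $t=\phi(v)$, \cref{c24:lem:resolvent} turns the right side into $A_a(u)A_a(v)\,\bigl(sM_a(s)-tM_a(t)\bigr)/(s-t)$.

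Next, evaluate $M_a(\phi(u))$ using the substitution already carried out in \cref{prop:q-moment-general}. There, with $z=\phi(u)$ one has $\sqrt{1-4z}=(1-u)/(1+u)$ and $zc(z)^2=u$, so
\[
M_a(\phi(u))=\frac{1+u}{1-u}\cdot\frac{1-u^3}{1+au}=\frac{(1+u)(1+u+u^2)}{1+au}.
\]
Multiplying by $\phi(u)$ gives $sM_a(s)=u(1+u+u^2)/\bigl((1+u)(1+au)\bigr)=u(1+u+u^2)/\bigl((1+u)^2A_a(u)\bigr)$. The factor $A_a(u)$ here will cancel against the prefactor. Two further ingredients are needed: the identity
\[
\phi(u)-\phi(v)=\frac{(u-v)(1-uv)}{(1+u)^2(1+v)^2},
\]
and the observation that the formal divided difference equals the genuine quotient whenever $s-t$ divides the numerator in the series ring. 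This holds because $(s-t)\cdot\sum_k\mu_k\sum_j s^{k-j}t^j=sM(s)-tM(t)$, and $u-v$ is a nonzerodivisor in $\Ring[[u,v]]$.

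Putting these together gives
\[
K_a(u,v)=\frac{A_a(v)(1+v)^2\,u(1+u+u^2)-A_a(u)(1+u)^2\,v(1+v+v^2)}{(u-v)(1-uv)}\cdot\frac{1}{1}.
\]
Since $A_a(v)(1+v)^2=(1+av)(1+v)$, the numerator is the polynomial
\[
N(u,v)=u(1+u+u^2)(1+v)(1+av)-v(1+v+v^2)(1+u)(1+au).
\]
This is antisymmetric, so $N$ vanishes at $u=v$ and $u-v$ divides it. The remaining step is the polynomial division $N(u,v)/(u-v)$, which should reproduce $\Pi_a(u,v)$ in \cref{c24:eq:Pi-def}. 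I would verify this by expanding $(u-v)\Pi_a(u,v)$ and comparing it with $N$ monomial by monomial, grouping by total degree from 1 through 5. The two must agree in every degree, for example $u-v$ in degree 1 and $a(u^3v^2-u^2v^3)$ in degree 5.

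I expect the main obstacle to be justifying the formal steps rather than the algebra. Specifically, one must check that substituting $s=\phi(u)$, $t=\phi(v)$ into the formal divided difference commutes with the cancellation of $u-v$, and that $1/(1-uv)$ arises legitimately. The safest route is to multiply through: show $(u-v)(1-uv)K_a=N$ as an identity in $\Ring[[u,v]]$, then cancel the nonzerodivisor $u-v$ and invert $1-uv$. The final polynomial division is routine bookkeeping.
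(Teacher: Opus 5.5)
Your proposal is correct and follows the paper's proof essentially step for step. Both arguments apply the resolvent identity with $s=\phi(u)$, $t=\phi(v)$, use the cancellation $A_a(u)M_a(\phi(u))=1+u+u^2$ and the factorization $u(1+v)^2-v(1+u)^2=(u-v)(1-uv)$, and divide the same numerator $N(u,v)$ by $u-v$.

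The differences are minor. The paper performs the division by splitting $N$ into $uP(u)-vP(v)$, $(a+1)uv\bigl(P(u)-P(v)\bigr)$ and $auv\bigl(vP(u)-uP(v)\bigr)$, with $P(w)=1+w+w^2$; each piece is visibly divisible by $u-v$, which is quicker than your monomial-by-monomial check. Your argument via $(u-v)(1-uv)K_a=N$, followed by cancelling the nonzerodivisor $u-v$ in $\Ring[[u,v]]$, makes explicit the formal cancellation that the paper leaves implicit.
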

\begin{proof}
By \cref{c24:eq:r-basis-gf,eq:moment-functional},
\begin{align*}
K_a(u,v)=\Lfunc\bigl(\mathcal{R}_a(X,u)\mathcal{R}_a(X,v)\bigr)
=A_a(u)A_a(v)\Lfunc\left( \frac{1}{(1-X\phi(u))(1-X\phi(v))}\right).
\end{align*}
Applying \cref{c24:lem:resolvent} with \(s=\phi(u)\), \(t=\phi(v)\), we obtain
\begin{equation}\label{c24:eq:gram-kernel-divided}
K_a(u,v)=A_a(u)A_a(v)\frac{\phi(u)M_a(\phi(u))-\phi(v)M_a(\phi(v))}{\phi(u)-\phi(v)}.
\end{equation}
We now evaluate \(M_a(\phi(u))\).  Since
\[1-4\phi(u)=1-\frac{4u}{(1+u)^2}=\frac{(1-u)^2}{(1+u)^2},\]
and both formal square roots have constant term \(1\), $\sqrt{1-4\phi(u)}=\frac{1-u}{1+u}$.
Moreover, $c(\phi(u))=\frac{1-\sqrt{1-4\phi(u)}}{2\phi(u)} =1+u$.
Therefore, we have $\phi(u)c(\phi(u))^2=u$.

Using \cref{eq:C-closed-form} and
\(g_a(u)=(1+au)/(1-u^3)\), we find
\begin{align*}
M_a(\phi(u))=\frac{1}{\sqrt{1-4\phi(u)}\,g_a(u)}
=\frac{1+u}{1-u}\frac{1-u^3}{1+au}
=\frac{(1+u)(1+u+u^2)}{1+au}.
\end{align*}
Consequently,
\begin{equation}\label{c24:eq:A-M-cancel}
  A_a(u)M_a(\phi(u))=1+u+u^2.
\end{equation}

For brevity, set $P(w)=1+w+w^2$. Substitution of \cref{c24:eq:A-M-cancel} into
\cref{c24:eq:gram-kernel-divided} gives
$$K_a(u,v)=\frac{A_a(v)\phi(u)P(u)-A_a(u)\phi(v)P(v)}{\phi(u)-\phi(v)}.$$
After clearing the denominators \((1+u)^2(1+v)^2\), this becomes
\begin{equation}\label{c24:eq:kernel-cleared}
K_a(u,v)=\frac{u(1+av)(1+v)P(u)-v(1+au)(1+u)P(v)}{u(1+v)^2-v(1+u)^2}.
\end{equation}
The denominator factors as
\begin{align}
u(1+v)^2-v(1+u)^2=u+2uv+uv^2-v-2uv-u^2v=(u-v)(1-uv).
\label{c24:eq:kernel-den-factor}
\end{align}
We next factor the numerator.  Since $(1+av)(1+v)=1+(a+1)v+av^2$, and analogously with \(u\) in place of \(v\), the numerator in \cref{c24:eq:kernel-cleared} is
\begin{align}
uP(u)-vP(v)+(a+1)uv\bigl(P(u)-P(v)\bigr)+auv\bigl(vP(u)-uP(v)\bigr).
\label{c24:eq:kernel-num-decompose}
\end{align}
The three differences in \cref{c24:eq:kernel-num-decompose} factor as
\begin{align}
uP(u)-vP(v)&=(u-v)\bigl(1+u+v+u^2+uv+v^2\bigr),\label{c24:eq:diff-1}\\
P(u)-P(v)&=(u-v)(1+u+v),\label{c24:eq:diff-2}\\
vP(u)-uP(v)&=(u-v)(uv-1).\label{c24:eq:diff-3}
\end{align}
Substituting \cref{c24:eq:diff-1,c24:eq:diff-2,c24:eq:diff-3} into \cref{c24:eq:kernel-num-decompose}, we obtain
\begin{align*}
(u-v)\Bigl(1+u+v+u^2+uv+v^2+(a+1)uv(1+u+v)+auv(uv-1)\Bigr).
\end{align*}
The coefficient of \(uv\) inside the parentheses is $1+(a+1)-a=2$.
Thus the expression inside the parentheses is precisely
\(\Pi_a(u,v)\) from \cref{c24:eq:Pi-def}.  Cancelling the common factor
\(u-v\) with \cref{c24:eq:kernel-den-factor} proves
\cref{c24:eq:gram-kernel-final}.
\end{proof}

\begin{corollary}\label{c24:cor:gram-matrix}
Let $\mathsf{G}_N=\bigl(\Lfunc(r_ir_j)\bigr)_{0\le i,j\le N-1}$, ($N\ge1$).
Then
\begin{equation}\label{c24:eq:h-det-G}
h_{N-1}=\det\mathsf{G}_N.
\end{equation}
For \(N\ge5\), the matrix is the truncation of
\begin{equation}\label{c24:eq:G-matrix-display}
\mathsf{G}_N=
\begin{pmatrix}
1&1&1&0&0&\cdots\\
1&3&a+2&1&0&\ddots\\
1&a+2&a+3&a+2&1&\ddots\\
0&1&a+2&a+3&a+2&\ddots\\
0&0&1&a+2&a+3&\ddots\\
\vdots&\ddots&\ddots&\ddots&\ddots&\ddots
\end{pmatrix}.
\end{equation}
More explicitly,
\begin{equation}\label{c24:eq:G-entry-cases}
(\mathsf{G}_N)_{i,j}=
\begin{cases}
1,&i=j=0,\\
3,&i=j=1,\\
a+3,&i=j\ge2,\\
1,&|i-j|=1\text{ and }\min\{i,j\}=0,\\
a+2,&|i-j|=1\text{ and }\min\{i,j\}\ge1,\\
1,&|i-j|=2,\\
0,&|i-j|\ge3.
\end{cases}
\end{equation}
\end{corollary}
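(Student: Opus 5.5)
The corollary has two parts: the determinant identity \cref{c24:eq:h-det-G} and the explicit entries \cref{c24:eq:G-entry-cases}. For the determinant, \cref{c24:lem:r-monic} says that $r_0,\dots,r_{N-1}$ are monic with $\deg r_n=n$. So \cref{lem:monic-change-basis}, applied with $p_j=r_j$ and $n=N-1$, gives $\det\mathsf{G}_N=h_{N-1}$ at once. All the remaining work is reading off the entries $\Lfunc(r_ir_j)$ as coefficients of $u^iv^j$ in the kernel of \cref{c24:prop:gram-kernel}.

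For the entries, I would expand $1/(1-uv)=\sum_{k\ge0}(uv)^k$ and multiply by $\Pi_a(u,v)$. A monomial $u^\alpha v^\beta$ of $\Pi_a$ contributes $1$ times its coefficient to every position $(i,j)=(\alpha+k,\beta+k)$ with $k\ge0$. Hence
\[
\Lfunc(r_ir_j)=\sum_{k\ge0}[u^{i-k}v^{j-k}]\,\Pi_a(u,v),
\]
which is a finite sum over the at most three diagonal steps $k\le\min\{i,j\}$.

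I would then sort the monomials of $\Pi_a$ by $\alpha-\beta$. Since $\Pi_a$ is symmetric, it suffices to treat $i\le j$.
\begin{itemize}
\item Difference $0$: the monomials are $1$, $2uv$ and $au^2v^2$. The diagonal entry $(j,j)$ is the sum of the coefficients at $(0,0),(1,1),(2,2)$ that lie at or below $(j,j)$. This gives $1$ for $j=0$, $1+2=3$ for $j=1$, and $1+2+a=a+3$ for $j\ge2$.
\item Difference $1$: the monomials are $v$ (at $(0,1)$) and $(a+1)uv^2$ (at $(1,2)$). This gives $1$ for $(0,1)$ and $1+(a+1)=a+2$ for $(i,i+1)$ with $i\ge1$.
\item Difference $2$: the only monomial is $v^2$, which gives $1$ at every $(i,i+2)$.
\item Difference at least $3$: $\Pi_a$ has no such monomials, so these entries vanish.
\end{itemize}
This reproduces \cref{c24:eq:G-entry-cases} case by case. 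The displayed matrix \cref{c24:eq:G-matrix-display} is then just the truncation; the condition $N\ge5$ only ensures that the generic rows are visible.

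There is no real obstacle, since everything reduces to coefficient extraction. The step most prone to error is the bookkeeping of which base monomials of $\Pi_a$ lie on each diagonal and therefore which partial sums apply near the top-left corner, namely the boundary cases $\min\{i,j\}=0,1$. I would check this with the small matrix for $N=3$ and confirm that it yields $h_1=2$ and $h_2=3-a^2$, consistent with \cref{c24:eq:h0123-main}.
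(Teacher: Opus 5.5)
Your proposal is correct and matches the paper's proof. The determinant identity comes from \cref{c24:lem:r-monic} together with \cref{lem:monic-change-basis}. The entries are read off by expanding $\Pi_a(u,v)\sum_{k\ge0}(uv)^k$ and grouping the monomials of $\Pi_a$ by their exponent difference, including the same boundary partial sums at $\min\{i,j\}=0,1$.
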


\begin{proof}
The determinant identity \cref{c24:eq:h-det-G} follows from
\cref{c24:lem:r-monic,lem:monic-change-basis}.  
By \cref{c24:eq:gram-kernel-final},
\[K_a(u,v)=\Pi_a(u,v)\sum_{k\ge0}(uv)^k.
\]
The monomial \(1\) in \(\Pi_a\) contributes \(1\) to every diagonal entry.  The monomial \(2uv\) contributes an additional \(2\) to every diagonal entry with index at least \(1\), and \(au^2v^2\) contributes an additional \(a\) to every diagonal entry with index at least \(2\). This gives the three diagonal cases in \cref{c24:eq:G-entry-cases}.

The monomials \(u\) and \(v\) contribute \(1\) to every first off-diagonal entry.  The monomials
\((a+1)u^2v\) and \((a+1)uv^2\) contribute an additional \(a+1\) to first off-diagonal entries whose smaller index is at least \(1\). Hence the first off-diagonal is \(1\) at the upper-left boundary and \(a+2\) thereafter.  The monomials \(u^2\) and \(v^2\) contribute \(1\) to every second off-diagonal entry.  Since \(\Pi_a\) has no monomial whose exponent difference exceeds \(2\), all entries at distance at least \(3\) vanish.  This proves \cref{c24:eq:G-entry-cases} and hence \cref{c24:eq:G-matrix-display}.
\end{proof}

\subsection{Rank-one decomposition}\label{c24:sec:rank-one}

Set $\lambda=a+1$.
For \(N\ge1\), let \(S_N\) be the lower shift matrix, indexed from
\(0\) to \(N-1\):
\[(S_N)_{i,j}=
\begin{cases}
1,&i=j+1,\\
0,&\text{otherwise}.
\end{cases}
\]
Thus \(S_N^N=0\).  Define
\begin{equation}\label{c24:eq:L-U-def}
  L_N=I_N+\lambda S_N+S_N^2,
  \qquad
  U_N=I_N+S_N^{\trans}+(S_N^{\trans})^2,
\end{equation}
and put $B_N=L_NU_N$. Both \(L_N\) and \(U_N\) are unit triangular, so
\begin{equation}\label{c24:eq:det-B-one}
  \det L_N=\det U_N=\det B_N=1.
\end{equation}

Let \(e_0,e_1,\ldots,e_{N-1}\) denote the standard basis vectors.
For \(N\ge2\), define $w_N=-a e_0+(1-a)e_1$.

\begin{proposition}[Rank-one decomposition]
\label{c24:prop:rank-one-decomp}
For every \(N\ge2\),
\begin{equation}\label{c24:eq:rank-one-decomp}
  \mathsf{G}_N=B_N+e_1w_N^{\trans}.
\end{equation}
\end{proposition}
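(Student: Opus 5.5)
The plan is a direct entrywise comparison: compute the entries of $B_N=L_NU_N$ explicitly and compare them with the case list \cref{c24:eq:G-entry-cases} of \cref{c24:cor:gram-matrix}. The claim is that $B_N$ and $\mathsf{G}_N$ agree everywhere except in the two positions $(1,0)$ and $(1,1)$, and that the defects there are exactly the entries of $e_1w_N^{\trans}$.

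\emph{Step 1 (entries of $B_N$).} I would write $(B_N)_{i,j}=\sum_k (L_N)_{i,k}(U_N)_{k,j}$.

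The factor $L_N$ is lower triangular. Its nonzero entries in row $i$ are $1$ at $k=i$, $\lambda$ at $k=i-1$ (when $i\ge1$), and $1$ at $k=i-2$ (when $i\ge2$).

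The factor $U_N$ is upper triangular. Its nonzero entries in column $j$ are $1$ at the rows $k=j,j-1,j-2$ that lie in $\{0,\dots,N-1\}$.

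Hence $(B_N)_{i,j}$ is the sum of the $L$-coefficients over those $k\in\{i,i-1,i-2\}$, $k\ge0$, with $0\le j-k\le 2$. Since $k\le\min\{i,j\}\le N-1$, no truncation issue arises: $B_N$ is the leading section of the infinite product. By symmetry of the band shape, it suffices to list cases.

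\emph{Step 2 (the cases).}
\begin{itemize}
\item Diagonal $i=j$: the admissible $k$ are $i,i-1,i-2$ (subject to $k\ge0$). This gives $1$, $1+\lambda=a+2$, $2+\lambda=a+3$ for $i=0$, $i=1$, $i\ge2$ respectively.
\item Superdiagonal $j=i+1$: the admissible $k$ are $i$ and $i-1$. This gives $1$ for $i=0$ and $1+\lambda=a+2$ for $i\ge1$.
\item Subdiagonal $j=i-1$: the admissible $k$ are $i-1$ (coefficient $\lambda$) and $i-2$. This gives $\lambda=a+1$ for $i=1$ and $a+2$ for $i\ge2$.
\item Distance two: $|i-j|=2$ forces $k=\min\{i,j\}$, with coefficient $1$ in either orientation.
\item Distance $\ge3$: no admissible $k$, so the entry is $0$.
\end{itemize}

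\emph{Step 3 (comparison).} Comparing with \cref{c24:eq:G-entry-cases}, every entry agrees except two:
\begin{itemize}
\item $(\mathsf{G}_N)_{1,0}-(B_N)_{1,0}=1-(a+1)=-a$;
\item $(\mathsf{G}_N)_{1,1}-(B_N)_{1,1}=3-(a+2)=1-a$.
\end{itemize}
Note that $(\mathsf{G}_N)_{0,1}=1=(B_N)_{0,1}$, so the defect is not symmetric. This is consistent with the rank-one term $e_1w_N^{\trans}$, which is supported on row $1$ only. Its row $1$ is $w_N^{\trans}=(-a,1-a,0,\dots,0)$, which matches the defects exactly. The hypothesis $N\ge2$ guarantees that $e_1$ and the positions $(1,0),(1,1)$ exist.

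The only real obstacle is bookkeeping. Specifically, one must get the asymmetric boundary entry right: the coefficient $\lambda$ sits in $L_N$, not in $U_N$, so it lands below the diagonal. One must also check the small-index boundary cases ($i\le 2$) separately from the generic band.
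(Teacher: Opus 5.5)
Your proposal is correct and follows essentially the same route as the paper. Both arguments compute $(B_N)_{i,j}=\sum_k (L_N)_{i,k}(U_N)_{k,j}$ entrywise, check agreement with the case list for $\mathsf{G}_N$ everywhere except positions $(1,0)$ and $(1,1)$, and recognize the defects $-a$ and $1-a$ there as row $1$ of $e_1w_N^{\trans}$ (your extra remark that $k\le\min\{i,j\}$ rules out truncation effects is a harmless clarification that the paper's finite convolution sum handles implicitly).
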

\begin{proof}
Let $\ell_0=1$, $\ell_1=\lambda$, $\ell_2=1$, and $p_0=p_1=p_2=1$,
with \(\ell_k=p_k=0\) outside \(\{0,1,2\}\).  From \cref{c24:eq:L-U-def}, we have 
$(L_N)_{i,k}=\ell_{i-k}$, $(U_N)_{k,j}=p_{j-k}$. Therefore
\begin{equation}\label{c24:eq:B-entry-convolution}
(B_N)_{i,j}=\sum_{k=0}^{N-1}\ell_{i-k}p_{j-k}.
\end{equation}

For \(i=j\ge2\), the only nonzero summands in
\cref{c24:eq:B-entry-convolution} correspond to
\(k=i,i-1,i-2\), and hence $(B_N)_{i,i}=\lambda+2=a+3$.
For \(i=j+1\) with \(j\ge1\), the two nonzero summands give $(B_N)_{j+1,j}=\lambda+1=a+2$, and the same calculation gives \((B_N)_{j,j+1}=a+2\).  At distance \(2\), there is exactly one nonzero summand, so $(B_N)_{i,j}=1$ when $|i-j|=2$. At distance at least \(3\), there is no admissible \(k\), so the entry is zero.

At the upper-left boundary, direct evaluation of
\cref{c24:eq:B-entry-convolution} gives
\[
  (B_N)_{0,0}=1,
  \quad
  (B_N)_{0,1}=1,
  \quad
  (B_N)_{0,2}=1,
\]
whenever the indicated columns exist, and
\[
  (B_N)_{1,0}=\lambda,
  \qquad
  (B_N)_{1,1}=\lambda+1,
  \qquad
  (B_N)_{1,2}=\lambda+1,
  \qquad
  (B_N)_{1,3}=1,
\]
whenever the indicated columns exist.  Comparing with
\cref{c24:eq:G-entry-cases}, every entry of \(B_N\) agrees with the
corresponding entry of \(\mathsf{G}_N\), except
\[
  (\mathsf{G}_N)_{1,0}-(B_N)_{1,0}
  =1-\lambda=-a
\]
and
\[
  (\mathsf{G}_N)_{1,1}-(B_N)_{1,1}
  =3-(\lambda+1)=1-a.
\]
Thus the difference \(\mathsf{G}_N-B_N\) has only one nonzero row,
the row with index \(1\), and that row is $(-a,1-a,0,\ldots,0)=w_N^{\trans}$.
This is exactly \cref{c24:eq:rank-one-decomp}.
\end{proof}

We shall use the following standard determinant identity.

\begin{lemma}\label{c24:lem:matrix-det}
Let \(B\) be an invertible \(N\times N\) matrix over a commutative ring, and let \(u,v\) be column vectors of length \(N\).  Then
\begin{equation}\label{c24:eq:matrix-det-lemma}
\det(B+uv^{\trans})=\det(B)\bigl(1+v^{\trans}B^{-1}u\bigr).
\end{equation}
\end{lemma}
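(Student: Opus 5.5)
The plan is to factor out \(B\) and reduce to the case \(B=I_N\). Since \(B\) is invertible over the commutative ring, we can write
\[
B+uv^{\trans}=B\bigl(I_N+B^{-1}u\,v^{\trans}\bigr),
\]
so multiplicativity of the determinant gives \(\det(B+uv^{\trans})=\det(B)\det(I_N+xv^{\trans})\) with \(x=B^{-1}u\). The scalar \(v^{\trans}B^{-1}u\) equals \(v^{\trans}x\). It therefore suffices to prove \(\det(I_N+xv^{\trans})=1+v^{\trans}x\) for arbitrary column vectors \(x,v\) over a commutative ring.

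For that identity, I would use the block-matrix trick, which avoids any division and so works over any commutative ring. Consider the \((N+1)\times(N+1)\) block identity
\[
\begin{pmatrix} I_N & 0\\ v^{\trans} & 1\end{pmatrix}
\begin{pmatrix} I_N+xv^{\trans} & x\\ 0 & 1\end{pmatrix}
\begin{pmatrix} I_N & 0\\ -v^{\trans} & 1\end{pmatrix}
=\begin{pmatrix} I_N & x\\ 0 & 1+v^{\trans}x\end{pmatrix},
\]
which is checked by direct block multiplication. The two outer factors are unit triangular and so have determinant \(1\). The middle factor is block upper triangular with determinant \(\det(I_N+xv^{\trans})\), and the right-hand side is block upper triangular with determinant \(1+v^{\trans}x\). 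Comparing determinants gives the claim.

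Alternatively, one could argue from the fact that both sides are polynomials in the entries of \(x\) and \(v\), and check the identity generically, but the block argument is cleaner. The only real obstacle is verifying the block product, in particular its lower-left block. That block is \(v^{\trans}(I_N+xv^{\trans})-v^{\trans}-(v^{\trans}x)v^{\trans}=0\), which holds because \(v^{\trans}x\) is a scalar that commutes with everything.
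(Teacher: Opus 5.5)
Your proof is correct. Your first step, writing \(B+uv^{\trans}=B(I_N+B^{-1}uv^{\trans})\) and reducing to \(\det(I_N+xv^{\trans})=1+v^{\trans}x\), is the same as the paper's; the two proofs differ only in how they establish this rank-one identity.

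The paper expands \(\det(I_N+xy^{\trans})\) multilinearly in the columns \(e_j+y_jx\):
\begin{itemize}
\item the term taking \(e_j\) from every column gives \(1\);
\item terms that take \(x\) from two or more columns vanish by the alternating property;
\item the term that takes \(x\) from exactly one column \(j\) contributes \(y_jx_j\).
\end{itemize}
You instead conjugate the bordered matrix by unit triangular block factors and compare the two block-triangular determinants. Your block product is correct, including the lower-left block.

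Both arguments avoid division and hold over any commutative ring. Your version reduces the proof to one mechanical multiplication plus the block-triangular determinant formula. It also generalizes verbatim to Sylvester's identity \(\det(I_N+XY)=\det(I_k+YX)\) for rank-\(k\) updates. The paper's column expansion uses only the basic multilinear and alternating properties of the determinant. Applied directly to \(B+uv^{\trans}\), without first factoring out \(B\), it gives \(\det(B+uv^{\trans})=\det B+v^{\trans}\operatorname{adj}(B)\,u\), which does not need \(B\) to be invertible.
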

\begin{proof}
Factor  $B+uv^{\trans}=B\bigl(I_N+B^{-1}uv^{\trans}\bigr)$.
It is therefore enough to prove $\det(I_N+xy^{\trans})=1+y^{\trans}x$.
This identity follows by multilinearity of the determinant in the columns.  The \(j\)-th column of \(I_N+xy^{\trans}\) is
\(e_j+y_jx\).  In the multilinear expansion, the term obtained by choosing every \(e_j\) is \(1\).  Any term in which \(x\) is chosen from two or more columns vanishes because it has two proportional columns.  The terms in which \(x\) is chosen from exactly one column sum to $\sum_{j=0}^{N-1}y_jx_j=y^{\trans}x$. This proves \cref{c24:eq:matrix-det-lemma}.
\end{proof}

To compute the scalar in \cref{c24:eq:matrix-det-lemma}, define sequences
\((\alpha_n)_{n\ge0}\) and \((\beta_n)_{n\ge0}\) by
\begin{equation}\label{c24:eq:alpha-beta-gf}
  \frac{1}{1+z+z^2}=\sum_{n\ge0}\alpha_nz^n,
  \qquad
  \frac{1}{1+\lambda z+z^2}=\sum_{n\ge0}\beta_nz^n.
\end{equation}

\begin{lemma}\label{c24:lem:toeplitz-inverse}
For every \(N\ge1\), we have 
\begin{align}
  U_N^{-1}&=\sum_{k=0}^{N-1}\alpha_k(S_N^{\trans})^k,\label{c24:eq:U-inverse}\\
  L_N^{-1}&=\sum_{k=0}^{N-1}\beta_kS_N^k.\label{c24:eq:L-inverse}
\end{align}
\end{lemma}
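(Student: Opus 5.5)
The plan is to identify the algebra of $N\times N$ lower-triangular Toeplitz matrices with the truncated polynomial ring $\Ring[z]/(z^N)$, via the map $z\mapsto S_N$. This map is a ring homomorphism: it sends $\sum_k c_k z^k$ to $\sum_{k=0}^{N-1} c_k S_N^k$, and it is well defined on the quotient because $S_N^N=0$. Under this map, $L_N=I_N+\lambda S_N+S_N^2$ is the image of $1+\lambda z+z^2$.

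The key step is the following. In $\Ring[[z]]$ we have, by \cref{c24:eq:alpha-beta-gf},
\[
(1+\lambda z+z^2)\sum_{n\ge0}\beta_nz^n=1 .
\]
Reducing modulo $z^N$ and applying the homomorphism gives
\[
L_N\sum_{k=0}^{N-1}\beta_kS_N^k=I_N .
\]
To make this concrete without invoking the quotient ring, expand the product directly. Comparing coefficients in the power-series identity gives $\beta_0=1$, $\beta_1+\lambda\beta_0=0$, and $\beta_n+\lambda\beta_{n-1}+\beta_{n-2}=0$ for $n\ge2$. The product $(I+\lambda S+S^2)\sum_{k<N}\beta_kS^k$ then equals $\sum_m \gamma_m S^m$, where $\gamma_m$ are exactly these convolution coefficients for $m\le N-1$. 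Any terms with $m\ge N$ vanish because $S_N^m=0$. Hence the product is $I_N$. Since the matrices are square over a commutative ring, a one-sided inverse is a two-sided inverse (alternatively, both factors are polynomials in $S_N$ and so commute). This proves \cref{c24:eq:L-inverse}.

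For \cref{c24:eq:U-inverse}, repeat the same argument with $S_N^{\trans}$ in place of $S_N$ and with $1+z+z^2$ and $(\alpha_n)$ in place of $1+\lambda z+z^2$ and $(\beta_n)$. It works because $(S_N^{\trans})^N=0$ as well. Equivalently, one can take the transpose of the identity for the Toeplitz inverse of $I+S+S^2$.

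No step is a real obstacle. The only point needing care is the truncation, namely checking that discarding powers $S^m$ with $m\ge N$ is harmless. That is exactly nilpotency, and it holds over any commutative ring, in particular over $\Ring=\mathbb{Q}[a]$.
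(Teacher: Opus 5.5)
Your proposal is correct and follows essentially the same argument as the paper: substitute the nilpotent matrix $S_N$ (respectively $S_N^{\trans}$) into the power-series identity $(1+\lambda z+z^2)\sum_{n\ge0}\beta_nz^n=1$ (respectively the one with $1+z+z^2$ and $\alpha_n$), and use $S_N^N=0$ to discard all terms of degree at least $N$. Your extra remark that the one-sided inverse is two-sided, since both factors are polynomials in $S_N$ and therefore commute, is a detail the paper leaves implicit.
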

\begin{proof}
The first identity in \cref{c24:eq:alpha-beta-gf} means that
$(1+z+z^2)\sum_{k\ge0}\alpha_kz^k=1$.
Substitute the nilpotent matrix \(S_N^{\trans}\) for \(z\).  Since \((S_N^{\trans})^N=0\), all terms of degree at least \(N\) vanish, and we obtain
\[\bigl(I_N+S_N^{\trans}+(S_N^{\trans})^2\bigr)\left(\sum_{k=0}^{N-1}\alpha_k(S_N^{\trans})^k\right)=I_N.
\]
This proves \cref{c24:eq:U-inverse}.  The proof of \cref{c24:eq:L-inverse} is identical, using the second reciprocal series
in \cref{c24:eq:alpha-beta-gf} and substituting \(S_N\) for \(z\).
\end{proof}

\begin{proposition}\label{c24:prop:h-partial-sum}
For every \(n\ge0\), we have 
\begin{equation}\label{c24:eq:h-partial-sum}
 h_n=1+\sum_{k=0}^{n-1}\bigl((1-a)\alpha_k-a\alpha_{k+1}\bigr)\beta_k,
\end{equation}
where the sum is empty when \(n=0\).
\end{proposition}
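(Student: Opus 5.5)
We use the rank-one decomposition of \cref{c24:prop:rank-one-decomp} together with the matrix determinant lemma \cref{c24:lem:matrix-det}. Put $N=n+1$. By \cref{c24:eq:h-det-G}, $h_n=\det\mathsf{G}_N$.

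For $N\ge2$, \cref{c24:prop:rank-one-decomp} gives $\mathsf{G}_N=B_N+e_1w_N^{\trans}$. Since $B_N=L_NU_N$ has determinant $1$ by \cref{c24:eq:det-B-one}, it is invertible over $\Ring$. Hence \cref{c24:lem:matrix-det} yields
\[
h_n=1+w_N^{\trans}B_N^{-1}e_1=1+w_N^{\trans}U_N^{-1}L_N^{-1}e_1 .
\]
The case $N=1$ (that is, $n=0$) is checked directly: $\mathsf{G}_1=(1)$, so $h_0=1$, matching the empty sum.

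Next we compute the two vectors using \cref{c24:lem:toeplitz-inverse}.

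First, $L_N^{-1}e_1=\sum_k\beta_kS_N^ke_1$, and $S_N^ke_1=e_{1+k}$ for $1+k\le N-1$. So $L_N^{-1}e_1$ has entry $\beta_{m-1}$ in position $m$ for $1\le m\le N-1$, and $0$ in position $0$.

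Second, we need the row vector $w_N^{\trans}U_N^{-1}$. Since $U_N^{-1}=\sum_k\alpha_k(S_N^{\trans})^k$ is upper triangular Toeplitz with $(U_N^{-1})_{i,j}=\alpha_{j-i}$ for $j\ge i$, we have $(e_i^{\trans}U_N^{-1})_m=\alpha_{m-i}$ for $m\ge i$. Therefore
\[
(w_N^{\trans}U_N^{-1})_m=-a\,\alpha_m+(1-a)\,\alpha_{m-1}\quad(m\ge1).
\]

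Taking the inner product of these two vectors over $m=1,\dots,N-1$ and reindexing with $k=m-1$ gives
\[
\sum_{k=0}^{n-1}\bigl((1-a)\alpha_k-a\alpha_{k+1}\bigr)\beta_k ,
\]
which is exactly \cref{c24:eq:h-partial-sum}.

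The only delicate step is the index bookkeeping: whether $B_N^{-1}$ equals $U_N^{-1}L_N^{-1}$ with the correct order, and which triangular Toeplitz matrix acts on the row vector and which on the column vector. Because $e_1$ sits on the left in $e_1w_N^{\trans}$, the column vector $L_N^{-1}e_1$ uses the lower shift. The row $w^{\trans}$ multiplies $U_N^{-1}$, whose powers of $S_N^{\trans}$ shift indices upward. The truncation at $N-1$ is consistent with the sum ending at $k=n-1$.
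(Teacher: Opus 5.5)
Your proof is correct and follows the paper's argument: the rank-one decomposition $\mathsf{G}_N=B_N+e_1w_N^{\trans}$, the matrix determinant lemma with $\det B_N=1$, and the Toeplitz inverses of $L_N$ and $U_N$. The only cosmetic difference is the order of evaluation. You first form the row vector $w_N^{\trans}U_N^{-1}$ and then pair it with $L_N^{-1}e_1$. The paper instead computes $y=U_N^{-1}L_N^{-1}e_1$, reads off $y_0,y_1$, and then pairs with $w_N$. By associativity both give the same sum.
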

\begin{proof}
For \(n=0\), \(h_0=\mu_0=1\), so the formula is immediate.  Let \(n\ge1\), and set \(N=n+1\).  By
\cref{c24:eq:h-det-G,c24:eq:rank-one-decomp,c24:eq:det-B-one} and the matrix determinant lemma,
\begin{align}
h_n=\det\mathsf{G}_N=\det(B_N+e_1w_N^{\trans})=1+w_N^{\trans}B_N^{-1}e_1.
\label{c24:eq:h-rank-one-scalar}
\end{align}
Since \(B_N=L_NU_N\), $B_N^{-1}=U_N^{-1}L_N^{-1}$.
Let $z=L_N^{-1}e_1$.
By \cref{c24:eq:L-inverse}, $z=\sum_{k=0}^{N-1}\beta_kS_N^ke_1$.
Now \(S_N^ke_1=e_{k+1}\) for \(0\le k\le N-2\), while \(S_N^{N-1}e_1=0\).  Therefore
\begin{equation}\label{c24:eq:z-components}
  z_0=0,\qquad z_j=\beta_{j-1} \quad(1\le j\le N-1).
\end{equation}
Set $y=U_N^{-1}z=B_N^{-1}e_1$.
By \cref{c24:eq:U-inverse}, the entries of \(U_N^{-1}\) are
\[(U_N^{-1})_{i,j}=
\begin{cases}
\alpha_{j-i},&j\ge i,\\
0,&j<i.
\end{cases}
\]
Using \cref{c24:eq:z-components}, we obtain
\begin{align*}
y_0=\sum_{j=1}^{N-1}\alpha_j\beta_{j-1},\quad 
y_1 =\sum_{j=1}^{N-1}\alpha_{j-1}\beta_{j-1} =\sum_{k=0}^{N-2}\alpha_k\beta_k.
\end{align*}
Since \(w_N=-ae_0+(1-a)e_1\), we have 
\begin{align*}
w_N^{\trans}y&=-ay_0+(1-a)y_1=-a\sum_{j=1}^{N-1}\alpha_j\beta_{j-1}+(1-a)\sum_{k=0}^{N-2}\alpha_k\beta_k
\\&=\sum_{k=0}^{N-2}\bigl((1-a)\alpha_k-a\alpha_{k+1}\bigr)\beta_k.
\end{align*}
Because \(N-2=n-1\), substitution into \cref{c24:eq:h-rank-one-scalar} proves \cref{c24:eq:h-partial-sum}.
\end{proof}

\subsection{The Hankel generating function}\label{c24:sec:recurrence}

Define
\begin{equation}\label{c24:eq:gamma-def}
  \gamma_n=\bigl((1-a)\alpha_n-a\alpha_{n+1}\bigr)\beta_n\qquad(n\ge0),
\end{equation}
and let $\Gamma(x)=\sum_{n\ge0}\gamma_nx^n$.
By \cref{c24:eq:h-partial-sum}, we have $h_n=1+\sum_{k=0}^{n-1}\gamma_k$.
Consequently,
\begin{align}
H(x):=\sum_{n\ge0}h_nx^n=1+\sum_{n\ge1}x^n+\sum_{n\ge1}\sum_{k=0}^{n-1}\gamma_kx^n
=\frac{1}{1-x}+\frac{x}{1-x}\sum_{k\ge0}\gamma_kx^k
=\frac{1+x\Gamma(x)}{1-x}.
\label{c24:eq:H-from-Gamma}
\end{align}
The interchange of summations is valid in the formal-power-series
sense because the coefficient of each fixed power of \(x\) involves
only finitely many \(\gamma_k\).
It remains to determine \(\Gamma(x)\).

From \cref{c24:eq:alpha-beta-gf}, the sequences \(\alpha_n\) and
\(\beta_n\) satisfy
\begin{align}
  \alpha_{n+2}+\alpha_{n+1}+\alpha_n&=0,
  \label{c24:eq:alpha-rec}\\
  \beta_{n+2}+\lambda\beta_{n+1}+\beta_n&=0,
  \label{c24:eq:beta-rec}
\end{align}
where \(\lambda=a+1\).

\begin{lemma}\label{c24:lem:product-recurrence}
The sequence \((\gamma_n)_{n\ge0}\) satisfies
\begin{equation}\label{c24:eq:gamma-recurrence}
\gamma_{n+4}-\lambda\gamma_{n+3}+(\lambda^2-1)\gamma_{n+2}-\lambda\gamma_{n+1}+\gamma_n=0
\end{equation}
for every \(n\ge0\).
\end{lemma}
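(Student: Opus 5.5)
The plan is to view $\gamma_n$ as a linear combination of products of terms from two linear recurrent sequences, and to read off the characteristic polynomial of the product from the roots. The sequence $\alpha_n$ satisfies $\alpha_{n+2}+\alpha_{n+1}+\alpha_n=0$, so it is a combination of $\omega^n,\omega^{-n}$, where $\omega$ is a primitive cube root of unity ($\omega+\omega^{-1}=-1$). The sequence $\beta_n$ satisfies \cref{c24:eq:beta-rec}, so it is a combination of $\theta^n,\theta^{-n}$, where $\theta+\theta^{-1}=-\lambda$. Both $\alpha_n$ and $\alpha_{n+1}$ lie in the span of $\omega^{\pm n}$. Hence $\gamma_n$ lies in the span of the four sequences $(\omega^{\pm1}\theta^{\pm1})^n$. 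The characteristic polynomial with roots $\omega^{\epsilon}\theta^{\delta}$ is
\[
\bigl(t^2-(\omega\theta+\omega^{-1}\theta^{-1})t+1\bigr)\bigl(t^2-(\omega\theta^{-1}+\omega^{-1}\theta)t+1\bigr).
\]
Put $s_1=\omega\theta+\omega^{-1}\theta^{-1}$ and $s_2=\omega\theta^{-1}+\omega^{-1}\theta$. Then
\[
s_1+s_2=(\omega+\omega^{-1})(\theta+\theta^{-1})=\lambda,
\qquad
s_1s_2=\omega^2+\omega^{-2}+\theta^2+\theta^{-2}=-1+\lambda^2-2.
\]
The product therefore expands to $t^4-\lambda t^3+(2+s_1s_2)t^2-\lambda t+1=t^4-\lambda t^3+(\lambda^2-1)t^2-\lambda t+1$, which matches \cref{c24:eq:gamma-recurrence}.

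To stay within formal algebra over $R=\mathbb{Q}[a]$, I will avoid roots and argue with shift operators instead. Let $E$ be the shift operator on sequences. The space $V_\alpha$ of sequences annihilated by $E^2+E+1$ is free of rank $2$, and $E$ acts on it with trace $-1$ and determinant $1$. Likewise, the space $V_\beta$ annihilated by $E^2+\lambda E+1$ is free of rank $2$, with trace $-\lambda$ and determinant $1$. The map $(x_n)\otimes(y_n)\mapsto(x_ny_n)$ sends $V_\alpha\otimes V_\beta$ into sequences, and $E$ acts on the image as $E\otimes E$. By Cayley--Hamilton, the image is therefore annihilated by the characteristic polynomial of $E_\alpha\otimes E_\beta$. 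That polynomial is a universal polynomial in the traces and determinants, which the root computation above identifies. The argument applies because $((1-a)\alpha_n-a\alpha_{n+1})$ lies in $V_\alpha$: it is a linear combination of $\alpha_n$ and $E\alpha_n$.

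An alternative that is fully elementary, and is the route I would write out, is a direct coefficient check. Reduce $\gamma_{n+k}$ for $k=0,\dots,4$ to the basis of the four products $\alpha_n\beta_n$, $\alpha_n\beta_{n+1}$, $\alpha_{n+1}\beta_n$, $\alpha_{n+1}\beta_{n+1}$, using \cref{c24:eq:alpha-rec,c24:eq:beta-rec}. This gives $2\times2$ matrices $P=\begin{pmatrix}0&-1\\1&-1\end{pmatrix}$ for $\alpha$ and $Q=\begin{pmatrix}0&-1\\1&-\lambda\end{pmatrix}$ for $\beta$, so that $(\alpha_{n+k}\beta_{n+k})$ corresponds to $P^k\otimes Q^k$. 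The claim then reduces to $\chi(P\otimes Q)=0$, where $\chi$ is the displayed quartic. This follows from Cayley--Hamilton for the $4\times4$ matrix $P\otimes Q$, once its characteristic polynomial is identified. The identification can be done in the algebraic closure of $\mathbb{Q}(a)$ via the eigenvalue computation above, and then holds as a polynomial identity in $a$.

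The main obstacle is justifying the eigenvalue/characteristic polynomial step rigorously over the polynomial ring rather than over $\mathbb{C}$. I will handle it by noting that both sides are polynomial identities in $a$, so it suffices to verify them over $\overline{\mathbb{Q}(a)}$, where $P$ and $Q$ are diagonalizable with the stated eigenvalues. Direct expansion of the $4\times4$ determinant $\det(tI-P\otimes Q)$ is a finite check and serves as a fallback.
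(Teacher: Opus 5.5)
Your proposal is correct and is essentially the paper's argument: the paper also encodes the two recurrences by $2\times2$ companion matrices, passes to the Kronecker product $T=P\otimes Q$ acting on the vector $(\alpha_{n+1}\beta_{n+1},\alpha_{n+1}\beta_n,\alpha_n\beta_{n+1},\alpha_n\beta_n)$, and applies Cayley--Hamilton to $T$. The only difference is that the paper gets $\det(tI_4-T)=t^4-\lambda t^3+(\lambda^2-1)t^2-\lambda t+1$ by expanding the $4\times4$ determinant directly, which is your fallback. Your eigenvalue pairing ($s_1+s_2=\lambda$, $s_1s_2=\lambda^2-3$) reaches the same polynomial more conceptually and is legitimate as a universal identity in the traces and determinants.
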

\begin{proof}
Define the two vectors
\[u_n=\begin{pmatrix}
    \alpha_{n+1}\\
    \alpha_n
\end{pmatrix},
  \qquad
 v_n=
  \begin{pmatrix}
    \beta_{n+1}\\
    \beta_n
  \end{pmatrix}.
\]
The recurrences \cref{c24:eq:alpha-rec,c24:eq:beta-rec} are equivalent to $u_{n+1}=Pu_n$, $v_{n+1}=Qv_n$, where
\[P=\begin{pmatrix}
    -1&-1\\
    1&0
\end{pmatrix},
\qquad
Q=\begin{pmatrix} -\lambda&-1\\ 1&0 \end{pmatrix}.
\]
Let $z_n=u_n\otimes v_n$.
With the coordinate order
\[z_n=
  \begin{pmatrix}
  \alpha_{n+1}\beta_{n+1}\\
  \alpha_{n+1}\beta_n\\
  \alpha_n\beta_{n+1}\\
  \alpha_n\beta_n
  \end{pmatrix},
\]
we have
\[
  z_{n+1}=Tz_n,
  \qquad
  T=P\otimes Q
  =
  \begin{pmatrix}
    \lambda&1&\lambda&1\\
    -1&0&-1&0\\
    -\lambda&-1&0&0\\
    1&0&0&0
  \end{pmatrix}.
\]
Moreover, \cref{c24:eq:gamma-def} can be written as
\begin{equation}\label{c24:eq:gamma-linear-functional}
  \gamma_n
  =
  \begin{pmatrix}0&-a&0&1-a\end{pmatrix}z_n.
\end{equation}

We now compute the characteristic polynomial of \(T\).  For an
indeterminate \(t\),
\[
  tI_4-T
  =
  \begin{pmatrix}
    t-\lambda&-1&-\lambda&-1\\
    1&t&1&0\\
    \lambda&1&t&0\\
    -1&0&0&t
  \end{pmatrix}.
\]
We obtain 
\begin{align*}
\det(tI_4-T)=(1-t^2)+t(t^3-\lambda t^2+\lambda^2t-\lambda)=t^4-\lambda t^3+(\lambda^2-1)t^2-\lambda t+1.
\end{align*}
By the Cayley--Hamilton theorem, $T^4-\lambda T^3+(\lambda^2-1)T^2-\lambda T+I_4=0$.
Multiplying this identity by \(z_n\), using \(z_{n+j}=T^jz_n\), and then applying the row vector in \cref{c24:eq:gamma-linear-functional}, gives exactly \cref{c24:eq:gamma-recurrence}.
\end{proof}

We now determine the initial values of \(\gamma_n\).  Since $\frac{1}{1+z+z^2}=\frac{1-z}{1-z^3}$,
we have
\begin{equation}\label{c24:eq:alpha-initial}
  \alpha_0=1,
  \quad
  \alpha_1=-1,
  \quad
  \alpha_2=0,
  \quad
  \alpha_3=1,
  \quad
  \alpha_4=-1.
\end{equation}
From the second identity in \cref{c24:eq:alpha-beta-gf},
\begin{equation}\label{c24:eq:beta-initial}
  \beta_0=1,
  \quad
  \beta_1=-\lambda,
  \quad
  \beta_2=\lambda^2-1,
  \quad
  \beta_3=-\lambda^3+2\lambda.
\end{equation}
Using \cref{c24:eq:gamma-def,c24:eq:alpha-initial,c24:eq:beta-initial}, we obtain
\begin{align*}
\gamma_0&=\bigl((1-a)\alpha_0-a\alpha_1\bigr)\beta_0=1,\\
\gamma_1&=\bigl((1-a)\alpha_1-a\alpha_2\bigr)\beta_1=(a-1)(-\lambda)=1-a^2,\\
\gamma_2&=\bigl((1-a)\alpha_2-a\alpha_3\bigr)\beta_2=-a(\lambda^2-1)=-a^2(a+2),\\
\gamma_3&=\bigl((1-a)\alpha_3-a\alpha_4\bigr)\beta_3=-\lambda^3+2\lambda=-a^3-3a^2-a+1.
\end{align*}

Let
\begin{equation}\label{c24:eq:D-def}
  D(x)
  =1-\lambda x+(\lambda^2-1)x^2-\lambda x^3+x^4.
\end{equation}
By \cref{c24:eq:gamma-recurrence}, the coefficient of \(x^n\) in
\(D(x)\Gamma(x)\) is zero for every \(n\ge4\).  The coefficients of
degrees \(0,1,2,3\) are therefore
\begin{align*}
[x^0]D(x)\Gamma(x)&=\gamma_0=1,\\
[x^1]D(x)\Gamma(x)&=\gamma_1-\lambda\gamma_0=1-a^2-(a+1)=-a(a+1)=-a\lambda,\\
[x^2]D(x)\Gamma(x)&=\gamma_2-\lambda\gamma_1+(\lambda^2-1)\gamma_0=a-1\\
[x^3]D(x)\Gamma(x)&=0.
\end{align*}
Consequently,
\begin{equation}\label{c24:eq:Gamma-rational}
\Gamma(x)=\frac{1-a(a+1)x+(a-1)x^2}{1-(a+1)x+\bigl((a+1)^2-1\bigr)x^2-(a+1)x^3+x^4}.
\end{equation}

We can now complete the proof of the main theorem.

\begin{proof}[Proof of \cref{thm:c24}]
Insert \cref{c24:eq:Gamma-rational} into \cref{c24:eq:H-from-Gamma}.  With
\(\lambda=a+1\) and \(D(x)\) as in \cref{c24:eq:D-def}, we obtain
\begin{align*}
H(x)=\frac{1}{1-x}\left(1+x\frac{1-a\lambda x+(a-1)x^2}{D(x)}\right)
=\frac{D(x)+x-a\lambda x^2+(a-1)x^3}{(1-x)D(x)}.
\end{align*}
According to 
$$D(x)+x-a\lambda x^2+(a-1)x^3=1-ax+ax^2-2x^3+x^4=(1-x)\bigl(1+(1-a)x+x^2-x^3\bigr),$$
we have
\[H(x)=\frac{1+(1-a)x+x^2-x^3}{1-(a+1)x+\bigl((a+1)^2-1\bigr)x^2-(a+1)x^3+x^4}.\]
This is \cref{c24:eq:main-result}.

Multiplying both sides of \cref{c24:eq:main-result} by its denominator and comparing coefficients gives the recurrence
\cref{c24:eq:h-recurrence-main}.  The coefficients of degrees \(0,1,2,3\) give \cref{c24:eq:h0123-main}.
\end{proof}

\section{The family $(1-(r-2)x+x^2)/(1-sx-x^2)$}\label{sec:c27}

Throughout this section, $R=\mathbb{Q}[r,s]$.

\subsection{The fundamental functional relation}
\label{c27:sec:specialized-moments}

We now specialize \cref{prop:C-closed-form} to the rational
function $g_{r,s}(x)=\frac{1-(r-2)x+x^2}{1-sx-x^2}$.
Set $M(z)=\Ctrans(g_{r,s})(z)=\sum_{n\ge 0}\mu_nz^n$.

\begin{proposition}{\em \cite[Section 9]{Barry2020}}\label{c27:prop:specialized-C-transform}
The series \(M(z)\) has the closed form
\begin{equation}\label{c27:eq:M-closed-form}
 M(z)=\frac{\sqrt{1-4z}-sz}{(1-rz)\sqrt{1-4z}}.
\end{equation}
\end{proposition}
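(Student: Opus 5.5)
The plan is to specialize \cref{prop:C-closed-form} directly, working in the variable \(y=zc(z)^2\) rather than \(z\). By \cref{eq:C-closed-form},
\[
M(z)=\frac{1}{\sqrt{1-4z}\,g_{r,s}(y)}=\frac{1-sy-y^2}{\sqrt{1-4z}\,\bigl(1-(r-2)y+y^2\bigr)},\qquad y=zc(z)^2 .
\]
So it suffices to express the numerator and the denominator polynomial in \(y\) as explicit multiples of \(1-rz\) and \(\sqrt{1-4z}-sz\).

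First I record the algebraic relations between \(y\) and \(z\). From \(c=1+zc^2\) we get \(c(z)=1+y\). Hence \(z=y/(1+y)^2\), since \(zc^2=y\) and \(c^2=(1+y)^2\), where \(1+y\) is a unit in \(R[[z]]\). Then
\[
1-4z=\frac{(1-y)^2}{(1+y)^2}.
\]
Both \(\sqrt{1-4z}\) and \((1-y)/(1+y)\) have constant term \(1\), so by uniqueness of formal square roots \(\sqrt{1-4z}=(1-y)/(1+y)\). This is exactly the argument used in the proof of \cref{prop:q-moment-general}, with \(u\) replaced by \(y\).

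Next come the two factorizations:
\begin{align*}
1-(r-2)y+y^2&=(1+y)^2-ry=(1+y)^2(1-rz),\\
1-sy-y^2&=(1-y^2)-sy=(1+y)^2\Bigl(\frac{1-y}{1+y}-sz\Bigr)=(1+y)^2\bigl(\sqrt{1-4z}-sz\bigr).
\end{align*}
Dividing the second by the first, the common unit factor \((1+y)^2\) cancels, giving
\[
\frac{1}{g_{r,s}(y)}=\frac{\sqrt{1-4z}-sz}{1-rz}.
\]
Substituting this back into \cref{eq:C-closed-form} gives \cref{c27:eq:M-closed-form}.

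The only point needing care, which I expect to be the main (mild) obstacle, is justifying these manipulations inside \(R[[z]]\). Specifically, one must check that \(1+y\) and \(1-rz\) are invertible (both have constant term \(1\)), and that the square-root identification is legitimate (uniqueness of square roots with constant term \(1\) over a \(\mathbb{Q}\)-algebra). Everything else is the two one-line polynomial identities above.
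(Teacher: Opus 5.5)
Your proof is correct. The relations $c(z)=1+y$, $z=y/(1+y)^2$ and $\sqrt{1-4z}=(1-y)/(1+y)$ hold in $R[[z]]$ for $y=zc(z)^2$. The two factorizations $1-(r-2)y+y^2=(1+y)^2(1-rz)$ and $1-sy-y^2=(1+y)^2\bigl(\sqrt{1-4z}-sz\bigr)$ are exact. Every series you invert ($1+y$, $1-rz$, $g_{r,s}(y)$) has constant term $1$.

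The paper does not prove this proposition itself; it only cites Barry, Section~9. So you are not taking a different route so much as filling in a step the paper outsources. Your argument specializes \cref{prop:C-closed-form} using the same change of variables $z=u/(1+u)^2$ that the paper already uses in the proof of \cref{prop:q-moment-general}, which makes the section self-contained within the paper's own framework.

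Your intermediate identity $1/g_{r,s}(y)=(\sqrt{1-4z}-sz)/(1-rz)$ is also exactly the form $(1-rz)M(z)=1-sz/\sqrt{1-4z}$ used in the proof of \cref{c27:prop:functional-relation}. Nothing further is needed downstream.
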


Let $\Lfunc:R[X]\longrightarrow R$, $\Lfunc(X^n)=\mu_n$.
The central-binomial functional \(\Afunc\) is the one defined in \cref{def:central-binomial-functional}.

\begin{proposition}
\label{c27:prop:functional-relation}
For every polynomial \(f(X)\in\Ring[X]\),
\begin{equation}\label{c27:eq:functional-relation}
\Lfunc\bigl((X-r)f(X)\bigr)=-s\Afunc(f(X)).
\end{equation}
\end{proposition}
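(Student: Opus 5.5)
By linearity in \(f\), it suffices to prove \cref{c27:eq:functional-relation} for \(f(X)=X^n\), \(n\ge0\). That case reads
\[
\mu_{n+1}-r\mu_n=-s\binom{2n}{n}\qquad(n\ge0).
\]
I will prove it by comparing generating functions. I will use the closed form \cref{c27:eq:M-closed-form} of \cref{c27:prop:specialized-C-transform} and the central binomial generating function \cref{eq:central-binomial-generating-function}.

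Multiply the left side by \(z^{n+1}\) and sum over \(n\ge0\). This gives
\[
\sum_{n\ge0}(\mu_{n+1}-r\mu_n)z^{n+1}=(M(z)-\mu_0)-rzM(z)=(1-rz)M(z)-\mu_0 .
\]
By \cref{c27:eq:M-closed-form},
\[
(1-rz)M(z)=\frac{\sqrt{1-4z}-sz}{\sqrt{1-4z}}=1-\frac{sz}{\sqrt{1-4z}}.
\]
The constant term gives \(\mu_0=1\) (equivalently, \(\mu_0=1/g(0)=1\)). Therefore
\[
\sum_{n\ge0}(\mu_{n+1}-r\mu_n)z^{n+1}=-\frac{sz}{\sqrt{1-4z}}=-s\sum_{n\ge0}\binom{2n}{n}z^{n+1}.
\]
Comparing the coefficients of \(z^{n+1}\) yields the monomial identity. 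Since \(\Lfunc\bigl((X-r)X^n\bigr)=\mu_{n+1}-r\mu_n\) and \(\Afunc(X^n)=\binom{2n}{n}\), linearity completes the proof. All manipulations take place in \(R[[z]]\), since \(\sqrt{1-4z}\) has constant term \(1\) and is invertible.

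The only nontrivial input is \cref{c27:eq:M-closed-form} itself, which is quoted from Barry. If it needs justification here, I would derive it from \cref{eq:C-closed-form} using \(u=zc(z)^2\). Two facts follow from \(c=1+zc^2\): first, \(u=c-1\); second, \(1+u+u^2\) and \(1-su-u^2\) simplify via the relation \(zc^2=c-1\) and \(\sqrt{1-4z}=1-2zc\). Simplifying \(g_{r,s}(u)\) this way is the main algebraic step.

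A cleaner alternative is to work entirely through \cref{prop:q-moment-general}. It suffices to check \cref{c27:eq:functional-relation} on the basis \(q_n\), because \(\{q_n\}\) is monic and hence spans \(R[X]\). On this basis \cref{eq:Xr-q0,eq:Xr-q1,eq:Xr-qn} express \((X-r)q_n\) in terms of the \(q\)'s, and \cref{eq:A-q-values} gives \(\Afunc(q_n)=\delta_{n0}\). The identity then reduces to a coefficient identity for \(1/g_{r,s}(u)\), namely \(\sum_n\Lfunc(q_n)u^n=(1-su-u^2)/(1-(r-2)u+u^2)\), which is easy to verify. Either route works; I would present the first and use the second as a check.
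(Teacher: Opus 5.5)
Your proof is correct and is essentially the paper's argument: both rewrite \cref{c27:eq:M-closed-form} as $(1-rz)M(z)=1-sz/\sqrt{1-4z}$, compare coefficients with $\sum_{n\ge0}\binom{2n}{n}z^{n+1}$ to get $\mu_0=1$ and $\mu_{n+1}-r\mu_n=-s\binom{2n}{n}$, and conclude by linearity over the monomial basis. (One small slip in your optional aside on deriving the closed form: the numerator of $g_{r,s}$ is $1-(r-2)u+u^2$, not $1+u+u^2$, and the key simplification is $1-(r-2)u+u^2=(1+u)^2(1-rz)$ under $z=u/(1+u)^2$.)
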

\begin{proof}
From \cref{c27:eq:M-closed-form},
\begin{equation}\label{c27:eq:M-times-linear-factor}
 (1-rz)M(z)=1-\frac{sz}{\sqrt{1-4z}}.
\end{equation}
Using $M(z)=\sum_{n\ge 0}\mu_nz^n$ 
and \cref{eq:central-binomial-generating-function}, the left-hand side of
\cref{c27:eq:M-times-linear-factor} is
\begin{align*}
 (1-rz)M(z)=\sum_{n\ge 0}\mu_nz^n-r\sum_{n\ge 0}\mu_nz^{n+1}=\mu_0+\sum_{n\ge 1}(\mu_n-r\mu_{n-1})z^n,
\end{align*}
whereas the right-hand side is
\[1-s\sum_{n\ge 0}\binom{2n}{n}z^{n+1}=1-s\sum_{n\ge 1}\binom{2n-2}{n-1}z^n.
\]
Comparing constant terms gives \(\mu_0=1\), and comparing the coefficient of
\(z^n\) for \(n\ge 1\) gives $\mu_n-r\mu_{n-1}=-s\binom{2n-2}{n-1}$.
Replacing \(n\) by \(k+1\), we obtain, for every \(k\ge 0\),
\begin{align*}
\Lfunc\bigl((X-r)X^k\bigr)=\Lfunc(X^{k+1})-r\Lfunc(X^k)=
 \mu_{k+1}-r\mu_k=-s\binom{2k}{k}=-s\Afunc(X^k).
\end{align*}
Since the monomials form an \(\Ring\)-basis of \(\Ring[X]\), linearity proves
\cref{c27:eq:functional-relation} for every polynomial \(f\).
\end{proof}
\begin{lemma}\label{c27:lem:X-r-Gram-A}
For \(i,j\ge 0\), define $J_{i,j}=\Afunc\bigl((X-r)q_iq_j\bigr)$.
Then
\begin{equation}\label{c27:eq:J-entry-cases}
 J_{i,j}
 =
 \begin{cases}
 2-r,&i=j=0,\\
 2(2-r),&i=j\ge 1,\\
 2,&|i-j|=1,\\
 0,&|i-j|\ge 2.
 \end{cases}
\end{equation}
\end{lemma}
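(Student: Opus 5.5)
The plan is to expand \((X-r)q_i\) with the multiplication recurrences of \cref{lem:q-basic-properties} and then apply the orthogonality relations of \cref{prop:q-orthogonality}. Since \(J_{i,j}=\Afunc\bigl(((X-r)q_i)\,q_j\bigr)\) and the expression is symmetric in \(i,j\), it suffices to handle each \(i\) and read off the pairing with \(q_j\).

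The expansions \cref{eq:Xr-q0,eq:Xr-q1,eq:Xr-qn} give three cases.
\begin{itemize}
\item For \(i=0\): \((X-r)q_0=q_1+(2-r)q_0\), so
\[
J_{0,j}=\Afunc(q_1q_j)+(2-r)\Afunc(q_0q_j).
\]
By \cref{eq:q-orthogonality} this equals \(2-r\) when \(j=0\), equals \(2\) when \(j=1\), and vanishes when \(j\ge2\).
\item For \(i=1\): \((X-r)q_1=q_2+(2-r)q_1+2q_0\), so \(J_{1,j}=\Afunc(q_2q_j)+(2-r)\Afunc(q_1q_j)+2\Afunc(q_0q_j)\). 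This gives \(J_{1,0}=2\cdot1=2\), \(J_{1,1}=(2-r)\cdot2=2(2-r)\), \(J_{1,2}=2\), and \(0\) for \(j\ge3\).
\item For \(i\ge2\): \((X-r)q_i=q_{i+1}+(2-r)q_i+q_{i-1}\) with \(i-1\ge1\). Pairing with \(q_j\) gives \(2(2-r)\) when \(j=i\), gives \(2\) when \(j=i\pm1\) (since \(\Afunc(q_{i\pm1}^2)=2\), the indices being positive), and \(0\) otherwise.
\end{itemize}

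The cases agree on their overlaps and with the symmetry \(J_{i,j}=J_{j,i}\); for example \(J_{0,1}=J_{1,0}=2\). Together they give \cref{c27:eq:J-entry-cases}.

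The only delicate point is the boundary bookkeeping. The coefficient \(2\) in front of \(q_0\) in \cref{eq:Xr-q1} is compensated by \(\Afunc(q_0^2)=1\) rather than \(2\). This is exactly why the entry \(|i-j|=1\) is uniformly \(2\), including the \((0,1)\) position. Beyond this, no real obstacle is expected.
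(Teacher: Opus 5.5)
Your proposal is correct and follows essentially the same route as the paper: you expand $(X-r)q_i$ via \cref{eq:Xr-q0,eq:Xr-q1,eq:Xr-qn} in the three cases $i=0$, $i=1$, $i\ge 2$, and read off each pairing with $q_j$ from \cref{eq:q-orthogonality}. Your boundary bookkeeping, where the coefficient $2$ on $q_0$ meets $\Afunc(q_0^2)=1$, matches the paper's values $J_{1,0}=J_{0,1}=2$.
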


\begin{proof}
For \(i=0\), \cref{eq:Xr-q0} gives $(X-r)q_0=q_1+(2-r)q_0$.
Taking the \(\Afunc\)-inner product with \(q_0\) and using
\cref{eq:q-orthogonality}, $J_{0,0}=\Afunc(q_1q_0)+(2-r)\Afunc(q_0^2)=2-r$.
Taking the inner product with \(q_1\), $J_{0,1}=\Afunc(q_1^2)+(2-r)\Afunc(q_0q_1)=2$.
If \(j\ge 2\), orthogonality gives \(J_{0,j}=0\).

For \(i=1\), \cref{eq:Xr-q1} gives $(X-r)q_1=q_2+(2-r)q_1+2q_0$.
Thus $J_{1,1}=2(2-r)$, $J_{1,2}=2$, $J_{1,j}=0$, ($j\ge 3$),
and also \(J_{1,0}=2\), in agreement with symmetry.

For \(i\ge 2\), \cref{eq:Xr-qn} gives $(X-r)q_i=q_{i+1}+(2-r)q_i+q_{i-1}$.
Taking the \(\Afunc\)-inner product with \(q_j\) and using \cref{eq:q-orthogonality}, only the cases \(j=i-1,i,i+1\) survive. 
The resulting values are $J_{i,i-1}=2$, $J_{i,i}=2(2-r)$, and $J_{i,i+1}=2$.
All remaining entries vanish. This proves \cref{c27:eq:J-entry-cases}.
\end{proof}

\subsection{Tridiagonalization of the Gram matrix}\label{c27:sec:tridiagonalization}

Define a sequence of monic polynomials by
\begin{equation}\label{c27:eq:p-basis-definition}
 p_0(X)=1,
 \qquad
 p_n(X)=(X-r)q_{n-1}(X)
 \quad(n\ge 1).
\end{equation}
Because \(q_{n-1}\) is monic of degree \(n-1\), the polynomial \(p_n\) is
monic of degree \(n\).

For \(i,j\ge 0\), set
\begin{equation}\label{c27:eq:Gamma-entry-definition}
 \Gamma_{i,j}=\Lfunc(p_ip_j).
\end{equation}

\begin{proposition}\label{c27:prop:tridiagonal-Gram}
The entries in \cref{c27:eq:Gamma-entry-definition} are
\begin{equation}\label{c27:eq:Gamma-entry-cases}
 \Gamma_{i,j}
 =
 \begin{cases}
 1,&i=j=0,\\
 -s,&\{i,j\}=\{0,1\},\\
 s(r-2),&i=j=1,\\
 2s(r-2),&i=j\ge 2,\\
 -2s,&|i-j|=1\text{ and }\min\{i,j\}\ge 1,\\
 0,&\text{otherwise}.
 \end{cases}
\end{equation}
Consequently, for \(n\ge 2\),
\begingroup
\small
\setlength{\arraycolsep}{3pt}
\begin{equation}\label{c27:eq:Gamma-matrix-explicit}
 \Gamma_n:=\bigl(\Gamma_{i,j}\bigr)_{0\le i,j\le n}
 =
 \begin{pmatrix}
 1&-s&0&0&\cdots&0\\
 -s&s(r-2)&-2s&0&\cdots&0\\
 0&-2s&2s(r-2)&-2s&\ddots&\vdots\\
 0&0&-2s&2s(r-2)&\ddots&0\\
 \vdots&\vdots&\ddots&\ddots&\ddots&-2s\\
 0&0&\cdots&0&-2s&2s(r-2)
 \end{pmatrix}.
\end{equation}
\endgroup
Moreover, $h_n=\det\Gamma_n$ ($n\ge 0$).
\end{proposition}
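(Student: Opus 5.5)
The plan is to use the functional relation of \cref{c27:prop:functional-relation} to convert almost every entry $\Gamma_{i,j}$ into an $\Afunc$-value. Those values are then computed from \cref{c27:lem:X-r-Gram-A} and the orthogonality \cref{eq:q-orthogonality}. By symmetry of $\Lfunc(fg)$ it suffices to treat $i\le j$. The final claim $h_n=\det\Gamma_n$ is immediate from \cref{lem:monic-change-basis}, since each $p_n$ is monic of degree $n$.

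\emph{Row zero.} First, $\Gamma_{0,0}=\Lfunc(1)=\mu_0=1$. For $j\ge1$, \cref{c27:eq:functional-relation} with $f=q_{j-1}$ gives
\[\Gamma_{0,j}=\Lfunc\bigl((X-r)q_{j-1}\bigr)=-s\Afunc(q_{j-1}).\]
By \cref{eq:A-q-values} this equals $-s$ when $j=1$ and $0$ when $j\ge2$.

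\emph{Entries with $i,j\ge1$.} Here $p_ip_j=(X-r)\cdot\bigl((X-r)q_{i-1}q_{j-1}\bigr)$. Applying \cref{c27:eq:functional-relation} with $f=(X-r)q_{i-1}q_{j-1}$ yields
\[\Gamma_{i,j}=-s\,\Afunc\bigl((X-r)q_{i-1}q_{j-1}\bigr)=-s\,J_{i-1,j-1}.\]
Reading off \cref{c27:eq:J-entry-cases} with indices shifted by one gives the remaining entries:
\begin{itemize}
\item $\Gamma_{1,1}=-s(2-r)=s(r-2)$;
\item $\Gamma_{i,i}=-2s(2-r)=2s(r-2)$ for $i\ge2$;
\item $\Gamma_{i,i+1}=-2s$ for $i\ge1$;
\item $\Gamma_{i,j}=0$ when $|i-j|\ge2$ with both indices at least one.
\end{itemize}
Combined with the row-zero entries, this is exactly \cref{c27:eq:Gamma-entry-cases}, and the matrix \cref{c27:eq:Gamma-matrix-explicit} is then a direct transcription.

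The only step needing care is the double use of the factor $(X-r)$. One factor is absorbed by $\Lfunc$ via \cref{c27:prop:functional-relation}, and the other is kept inside the $\Afunc$-argument so that \cref{c27:lem:X-r-Gram-A} applies. Beyond this, the argument is bookkeeping: tracking the index shift and the boundary cases $i=0$ and $i=1$, where $q_0$ and the exceptional recurrence \cref{eq:Xr-q1} produce the anomalous corner values. I do not expect a genuine obstacle.
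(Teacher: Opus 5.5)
Your proposal is correct and matches the paper's proof essentially step for step. Row zero is handled by the functional relation together with $\Afunc(q_0)=1$ and $\Afunc(q_n)=0$ for $n\ge1$; the entries with $i,j\ge1$ come from $\Gamma_{i,j}=-s\,J_{i-1,j-1}$ and the lemma on $J$; and $h_n=\det\Gamma_n$ follows from the monic change-of-basis lemma.
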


\begin{proof}
First, $\Gamma_{0,0}=\Lfunc(1)=\mu_0=1$.
For \(j\ge 1\), \cref{c27:eq:p-basis-definition} and
\cref{c27:eq:functional-relation} give
\begin{align}
\Gamma_{0,j}=\Lfunc(p_j)=\Lfunc\bigl((X-r)q_{j-1}\bigr)
=-s\Afunc(q_{j-1}).
 \label{c27:eq:Gamma-zero-row}
\end{align}
By \cref{eq:A-q-values}, we have $\Gamma_{0,1}=-s$, $\Gamma_{0,j}=0$, ($j\ge 2$).
Symmetry gives the corresponding first-column entries.

Now let \(i,j\ge 1\). Then
\begin{align*}
\Gamma_{i,j}=\Lfunc\bigl((X-r)^2q_{i-1}q_{j-1}\bigr)= \Lfunc\left((X-r)\bigl((X-r)q_{i-1}q_{j-1}\bigr)\right) =-s\Afunc\bigl((X-r)q_{i-1}q_{j-1}\bigr),
\end{align*}
By \cref{c27:lem:X-r-Gram-A}, we have 
\begin{align*}
 \Gamma_{1,1}
 &=-s(2-r)=s(r-2),\\
 \Gamma_{i,i}
 &=-2s(2-r)=2s(r-2)
 \qquad(i\ge 2),\\
 \Gamma_{i,i+1}
 &=\Gamma_{i+1,i}=-2s
 \qquad(i\ge 1),
\end{align*}
and \(\Gamma_{i,j}=0\) whenever \(i,j\ge 1\) and \(|i-j|\ge 2\).
Together with \cref{c27:eq:Gamma-zero-row}, this proves
\cref{c27:eq:Gamma-entry-cases,c27:eq:Gamma-matrix-explicit}.

Finally, the polynomials in \cref{c27:eq:p-basis-definition} are monic with
\(\deg p_j=j\). Therefore \cref{lem:monic-change-basis} applies and gives $h_n=\det\Gamma_n$.
\end{proof}

\subsection{The Hankel generating function}
\label{c27:sec:determinants}

We now compute the leading principal minors of
\cref{c27:eq:Gamma-matrix-explicit}.
For \(n=0\),
\begin{equation}\label{c27:eq:h0-proof}
 h_0=\det(1)=1.
\end{equation}
For \(n=1\),
\begin{align}
 h_1=\det
 \begin{pmatrix}
 1&-s\\
 -s&s(r-2)
 \end{pmatrix}
=s(r-2)-s^2
=s(r-s-2).
\label{c27:eq:h1-proof}
\end{align}

For \(n\ge 2\), the last diagonal entry of \(\Gamma_n\) is $d=2s(r-2)$, and the last subdiagonal entry is $e=-2s$.
We now derive the recurrence without omitting the cofactor signs. The matrix \(\Gamma_n\) has size \(n+1\). Expanding its determinant along the last row, the last diagonal entry contributes $d\det\Gamma_{n-1}=dh_{n-1}$.
The only other nonzero entry in the last row is \(e\), located in the
penultimate column. Its cofactor sign is $(-1)^{(n+1)+n}=-1$ when rows and columns are numbered from \(1\) to \(n+1\). After deleting the last row and the penultimate column, the resulting \(n\times n\) minor has, in its last column, exactly one nonzero entry, namely \(e\), in its last row. Expanding that minor along its last column produces $e\det\Gamma_{n-2}=eh_{n-2}$
with positive cofactor sign. Hence the total contribution of the penultimate-column entry in the original last-row expansion is
$e\cdot(-1)\cdot e h_{n-2}=-e^2h_{n-2}$.
Therefore
\begin{equation}\label{c27:eq:h-recurrence-proof}
 h_n=dh_{n-1}-e^2h_{n-2}=2s(r-2)h_{n-1}-4s^2h_{n-2}\qquad(n\ge 2).
\end{equation}
This proves the recurrence asserted in \cref{c27:eq:h-recurrence-intro}.

\begin{proof}[Proof of \cref{thm:c27}]
Let $H(x)=\sum_{n\ge 0}h_nx^n$.
Multiply \cref{c27:eq:h-recurrence-proof} by \(x^n\) and sum over
\(n\ge 2\):
\begin{align}
 \sum_{n\ge 2}h_nx^n=2s(r-2)\sum_{n\ge 2}h_{n-1}x^n-4s^2\sum_{n\ge 2}h_{n-2}x^n.
 \label{c27:eq:sum-recurrence}
\end{align}
Each sum can be expressed in terms of \(H(x)\):
\begin{align*}
 \sum_{n\ge 2}h_nx^n=H(x)-h_0-h_1x,\quad 
 \sum_{n\ge 2}h_{n-1}x^n=x\bigl(H(x)-h_0\bigr),\quad
 \sum_{n\ge 2}h_{n-2}x^n=x^2H(x).
\end{align*}
Substituting these identities into \cref{c27:eq:sum-recurrence} gives
\begin{align*}
 \bigl(1-2s(r-2)x+4s^2x^2\bigr)H(x)=h_0+\bigl(h_1-2s(r-2)h_0\bigr)x.
\end{align*}
By \cref{c27:eq:h0-proof,c27:eq:h1-proof}, we have 
\[H(x) =\frac{1-s(r+s-2)x}{1+2s(2-r)x+4s^2x^2}.\]
This is exactly \cref{c27:eq:main-generating-function}, and the proof of \cref{thm:c27} is complete.
\end{proof}

\begin{example}
For \(r=2\) and \(s=-1\), \cref{c27:eq:main-generating-function} gives $H(x)=\frac{1-x}{1+4x^2}$.
Its coefficient sequence begins $1,-1,-4,4,16,-16,-64,64,\ldots$, 
which agrees with the example in \cite[Example~28]{Barry2020}.
\end{example}






\noindent
{\small \textbf{Acknowledgments:}}
This work was partially supported by the Natural Science Foundation of Henan Province (Grant No. [262300422649]).

\end{document}